\numberwithin{equation}{section}
\numberwithin{figure}{section}
\newtheorem{thm}{Theorem}[section]
\newtheorem{prop}[thm]{Proposition}
\newtheorem{lemma}[thm]{Lemma}
\newtheorem{cor}[thm]{Corollary}
\theoremstyle{definition}
\newtheorem{example}[thm]{Example}
\newtheorem{defin}[thm]{Definition}
\newtheorem{remark}[thm]{Remark}
\newtheorem{assumption}[thm]{Assumption}
\title[Delzant type theorem for equivariant hypersurfaces]{Delzant type theorem for torus-equivariantly embedded toric hypersurfaces}
\author{Kentaro Yamaguchi}
\date{\today}
\subjclass[2020]{Primary 53C40; Secondary 53D20, 14M25}
\keywords{Delzant correspondence, toric K\"{a}hler manifold, torus-equivariantly embedding}
\address{Department of Mathematical Sciences, Tokyo Metropolitan University, 1-1 Minami-Ohsawa, Hachioji, Tokyo, 192-0397, Japan}
\email{c201596e@alm.icu.ac.jp}
\begin{document}

\begin{abstract}
In the previous work, we study the moment polytope of the closure of the complex subtorus orbit in a symplectic toric manifold associated to an affine subspace when the closure is a smooth complex submanifold. 
In this paper, we clarify the condition for nonsingularity of the closure of the codimension one complex subtorus orbit in terms of polytopes.
The main result is a generalization of Delzant theorem.
\end{abstract}

\maketitle


\section{Introduction}
\label{sec: introduction}

Symplectic toric manifolds are $2n$-dimensional symplectic manifolds equipped with an effective Hamiltonian action of an $n$-dimensional torus.
The image of the moment map for the Hamiltonian torus action is the convex hull of the image of the fixed points of the Hamiltonian action \cite{MR642416,MR664117}.
Due to the work of Delzant \cite{MR984900}, there is a one-to-one correspondence between symplectic toric manifolds and certain convex polytopes appeared as the moment polytopes of symplectic toric manifolds. The moment polytopes of symplectic toric manifolds are called \textit{Delzant polytopes}.
Moreover, symplectic toric manifolds canonically admit a K\"{a}hler metric, which is called the \textit{Guillemin metric} \cite{MR1293656,MR1301331,MR2039163}.
Using the data of a Delzant polytope, the complements of the toric divisors $D$ in the corresponding symplectic toric manifold $X$ can be identified with the complex torus $(\mathbb{C}^{\ast})^{n}$.

In \cite{MR4938007}, we construct the closure $\overline{C(V)} \subset X$ of the $(\mathbb{C}^{\ast})^{k}$-orbit $C(V) \subset X \setminus D \cong (\mathbb{C}^{\ast})^{n}$ from the data of a $k$-dimensional affine subspace $V \subset \mathfrak{t}^{n} \cong \mathbb{R}^{n}$.
The construction of $\overline{C(V)}$ is motivated by Yamamoto's construction \cite{MR3856842} of the Leung--Yau--Zaslow correspondence \cite{MR1894858} in the semi-flat case of Strominger--Yau--Zaslow mirror symmetry \cite{MR1429831}.
The closure $\overline{C(V)}$ can be expressed by the zero locus of polynomials $f^{\lambda}_{k+1},\ldots,f^{\lambda}_{n}$ for each vertex $\lambda$ of the Delzant polytope, which are determined by the data of the Delzant polytope and the affine subspace.
Note that the closure $\overline{C(V)}$ might have singularity at the intersection with the toric divisors $D$.
We show in \cite[Theorem 4.20]{MR4938007} that if the closure $\overline{C(V)}$ is a smooth $k$-dimensional complex submanifold in the symplectic toric manifold $X$, then the moment polytope for the Hamiltonian $k$-dimensional torus action on $\overline{C(V)}$ coincides with the moment polytope for the Hamiltonian $k$-dimensional subtorus action on $X$.
Moreover, the submanifold $\overline{C(V)}$ is a symplectic toric manifold with respect to the $k$-dimensional torus action on $\overline{C(V)}$.
In particular, if $\overline{C(V)}$ is a complex submanifold, then the moment polytope for the Hamiltonian $k$-dimensional subtorus action on $X$ is a Delzant polytope. 
We call the complex submanifold $\overline{C(V)}$ the \textit{torus-equivariantly embedded toric manifold}.

Even though the moment polytope for the Hamiltonian $k$-dimensional subtorus action on $X$ is a Delzant polytope as a convex polytope for some affine subspace $V$, $\overline{C(V)}$ might not be a smooth complex submanifold.
For example, let $X = \mathbb{C}P^{2}$.
For any $1$-dimensional affine subspace $V$ in $\mathfrak{t}^{2} \cong \mathbb{R}^{2}$, the moment polytope for the Hamiltonian one-dimensional subtorus action associated with $V$ becomes an interval in $(\mathfrak{t}^{1})^{\ast} \cong \mathbb{R}$, which can be seen as the Delzant polytope of $\mathbb{C}P^{1}$. However, there are many affine subspaces $V$ such that the closure $\overline{C(V)}$ has a singularity (see \cite[Section 5.1]{MR4938007}).

Under the Delzant correspondence, we expect that objects in symplectic toric geometry shall be characterized combinatorially in terms of polytopes. The main result in this paper is to give a characterization of the nonexistence of singular points in $\overline{C(V)}$ in terms of (moment) polytopes. 
Moreover, through the Leung--Yau--Zaslow correspondence, we expect that this characterization must have a relation with an asymptotic condition of conormal bundles of $V$ in $\mathfrak{t}^{n}$ (in the sense of \cite{MR2564372,MR3948684} for example).

\subsection{Main Results}
\label{subsec: main theorem}

In this paper, we show that a Delzant type correspondence for torus-equivariantly embedded toric manifolds of codimension one, i.e. $k = n-1$.

Let $V$ be an $(n-1)$-dimensional affine subspace in $\mathfrak{t}^{n} \cong \mathbb{R}^{n}$ with rational slope, i.e. $\vec{V} \cap \mathbb{Z}^{n} \cong \mathbb{Z}^{n-1}$, where $\vec{V}$ is the linear part of $V$.
Then, we may take a primitive $\mathbb{Z}$-basis $p_{1},\ldots, p_{n-1} \in \mathbb{Z}^{n}$ of $\vec{V} \cap \mathbb{Z}^{n}$ and $a \in \mathbb{R}^{n}$ such that $V = \mathbb{R}p_{1} + \cdots + \mathbb{R}p_{n-1} + a$.
We define the injective homomorphism $i_{V}:T^{n-1} \to T^{n}$ by 
\begin{equation*}
    i_{V}(t_{1},\ldots, t_{n-1})
    :=
    \left(
        \prod_{l=1}^{n-1}t_{l}^{\langle p_{l},e_{1} \rangle}, \ldots, \prod_{l=1}^{n-1}t_{l}^{\langle p_{l},e_{n} \rangle}
    \right),
\end{equation*}
where $e_{1},\ldots,e_{n}$ are the standard basis of $(\mathfrak{t}^{n})^{\ast} \cong \mathbb{R}^{n}$.
We introduce some notions concerning the pair of a Delzant polytope and an affine subspace.

\begin{defin}
\label{def: vertex and no direction vectors are perp}
Let $\Delta$ be a Delzant polytope in $(\mathfrak{t}^{n})^{\ast}$ and $V = \mathbb{R}p_{1} + \cdots + \mathbb{R}p_{n-1} + a$ an $(n-1)$-dimensional affine subspace in $\mathfrak{t}^{n}$ with rational slope. 
A vertex $\lambda$ of the polytope $\Delta$ is a \textit{good vertex with respect to the pullback $i_{V}^{\ast}: (\mathfrak{t}^{n})^{\ast} \to (\mathfrak{t}^{n-1})^{\ast}$} if the vertex $\lambda$ satisfies the two conditions:
\begin{enumerate}
    \item $i_{V}^{\ast}(\lambda)$ is a vertex of the convex polytope $i_{V}^{\ast}(\Delta)$,
    \item for the direction vectors $v^{\lambda}_{1},\ldots, v^{\lambda}_{n}$ of the edges from the vertex $\lambda$ of the polytope $\Delta$, the vectors $i_{V}^{\ast}(v^{\lambda}_{1}),\ldots, i_{V}^{\ast}(v^{\lambda}_{n})$ in $(\mathfrak{t}^{n-1})^{\ast}$ are all nonzero.
\end{enumerate}
\end{defin}

\begin{defin}
\label{def: almost delzant}
Let $\Delta$ be a Delzant polytope in $(\mathfrak{t}^{n})^{\ast}$. An affine subspace $V$ in $\mathfrak{t}^{n}$ with rational slope is \textit{admissible with respect to the Delzant polytope $\Delta$} if any good vertex $\lambda$ with respect to the map $i_{V}^{\ast}$ satisfies the two conditions:
\begin{itemize}
    \item we can choose $n-1$ direction vectors $v^{\lambda}_{1},\ldots, v^{\lambda}_{j-1},v^{\lambda}_{j+1},\ldots, v^{\lambda}_{n}$ such that 
    $i_{V}^{\ast}(v^{\lambda}_{1}),\ldots, i_{V}^{\ast}(v^{\lambda}_{j-1}),i_{V}^{\ast}(v^{\lambda}_{j+1}),\ldots,i_{V}^{\ast}(v^{\lambda}_{n})$ are linearly independent and $i_{V}^{\ast}(\{\sum_{i=1}^{n}a_{i}v^{\lambda}_{i}\mid a_{i} \geq 0 \}) = \{\sum_{i\neq j}b_{i}i_{V}^{\ast}(v^{\lambda}_{i}) \mid b_{i} \geq 0 \}$,
    \item the vectors $i_{V}^{\ast}(v^{\lambda}_{1}),\ldots, i_{V}^{\ast}(v^{\lambda}_{j-1}),i_{V}^{\ast}(v^{\lambda}_{j+1}),\ldots,i_{V}^{\ast}(v^{\lambda}_{n}) \in \mathbb{Z}^{n-1}$ from the vertex $i_{V}^{\ast}(\lambda)$ of $i_{V}^{\ast}(\Delta)$ form a $\mathbb{Z}$-basis of $\mathbb{Z}^{n-1}$.
\end{itemize}
\end{defin}

Note that the first condition in Definition \ref{def: almost delzant} are equivalent to the condition that the polytope $i_{V}^{\ast}(\Delta)$ is simple.
In Section \ref{sec: example} we demonstrate examples of subspaces that are admissible and not admissible with respect to some Delzant polytopes.

The closure $\overline{C(V)}$ can be expressed by the zero locus of a polynomial $f^{\lambda}$ for each vertex $\lambda$ of $\Delta$.
Our first main result is as follows:
\begin{thm}[Theorem \ref{thm: almost delzant to torus equivariant submanifold}]
\label{thm: main}
Let $\Delta$ be a Delzant polytope in $(\mathfrak{t}^{n})^{\ast}$, $V = \mathbb{R}p_{1} + \cdots + \mathbb{R}p_{n-1} + a$ an $(n-1)$-dimensional affine subspace in $\mathfrak{t}^{n}$ with rational slope, $X$ a toric manifold associated with the Delzant polytope $\Delta$. 
If $V$ is admissible with respect to $\Delta$, then the rank of the Jacobian matrix $Df^{\lambda}$ is equal to one at any point of the zero locus of $f^{\lambda}$ for any vertex $\lambda$ of the Delzant polytope $\Delta$. In particular, $\overline{C(V)}$ is a smooth complex hypersurface in the toric manifold $X$.
\end{thm}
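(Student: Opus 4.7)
The plan is to verify the rank-$1$ condition on $Df^\lambda$ separately on each vertex chart $U_\lambda\cong\mathbb{C}^n$, splitting into the cases of good and non-good vertices. From \cite{yamaguchi2023toric} one has an explicit formula: in coordinates $z^\lambda_1,\ldots,z^\lambda_n$ on $U_\lambda$ indexed by the primitive edge vectors $v^\lambda_1,\ldots,v^\lambda_n$, $f^\lambda$ is the binomial
\[
f^\lambda \;=\; \prod_{m^\lambda_i>0}(z^\lambda_i)^{m^\lambda_i}\;-\;c_\lambda\prod_{m^\lambda_i<0}(z^\lambda_i)^{-m^\lambda_i},\qquad c_\lambda\in\mathbb{C}^*,
\]
where the integer exponents $m^\lambda_i$ are determined by expressing the primitive generator $q\in\overline{V}^\perp\cap\mathbb{Z}^n$ in the edge basis as $q=\sum_i m^\lambda_i v^\lambda_i$. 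The Delzant condition forces $\gcd(m^\lambda_1,\ldots,m^\lambda_n)=1$, and applying $i_V^*$ to the expansion of $q$ gives the key relation $\sum_i m^\lambda_i i_V^*(v^\lambda_i)=0$.

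At a good vertex $\lambda$, I would unpack Definition \ref{def: almost delzant}. The $\mathbb{Z}$-basis condition on $\{i_V^*(v^\lambda_i)\}_{i\neq j}$, together with the relation just noted and the primitivity $\gcd=1$, forces $|m^\lambda_j|=1$, and the cone condition translates to $-m^\lambda_i/m^\lambda_j\geq 0$ for every $i\neq j$, so every other $m^\lambda_i$ has sign opposite to $m^\lambda_j$. Hence $f^\lambda$ is affine linear in $z^\lambda_j$ with constant coefficient $\pm 1$, so $\partial f^\lambda/\partial z^\lambda_j$ is a nonvanishing constant on $U_\lambda$ and the Jacobian has rank one throughout.

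At a non-good vertex $\lambda$, the key claim is that all $m^\lambda_i$ must have the same sign, and once this is established the rank-$1$ property follows automatically. Indeed, if condition (2) of Definition \ref{def: vertex and no direction vectors are perp} fails then $v^\lambda_i=\pm q$ for some $i$ by primitivity, forcing $m^\lambda_i=\pm 1$ with the rest zero; if instead only condition (1) fails, a direct inspection of the projected cone $\{\sum a_i i_V^*(v^\lambda_i):a_i\geq 0\}$ shows it fails to be pointed precisely when all $m^\lambda_i$ share a common sign. After replacing $q$ by $-q$ if necessary, $f^\lambda=\prod_i(z^\lambda_i)^{-m^\lambda_i}-c_\lambda^{-1}$ with $-m^\lambda_i\geq 0$; since $c_\lambda^{-1}\neq 0$, the zero locus avoids each hyperplane $\{z^\lambda_i=0\}$ with $m^\lambda_i\neq 0$, and a routine computation shows $\partial f^\lambda/\partial z^\lambda_j$ is nonzero on the zero locus for any $j$ with $m^\lambda_j\neq 0$; such $j$ exists because $q\neq 0$.

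The main obstacle I anticipate is the cone argument in the non-good case, translating the failure of ``$i_V^*(\lambda)$ is a vertex of $i_V^*(\Delta)$'' into sign uniformity of the $m^\lambda_i$ via the surjection $i_V^*\colon\mathbb{Z}^n\to\mathbb{Z}^{n-1}$; once this equivalence is in place, combining the two cases yields rank $1$ at every point of the zero locus of every $f^\lambda$, and $\overline{C(V)}$ is a smooth complex hypersurface in $X$.
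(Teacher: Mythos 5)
Your proposal is correct and follows essentially the same route as the paper: your exponents $m^\lambda_i$ are exactly the paper's $\langle u^\lambda_i,q\rangle$ (via $\langle u^\lambda_i,v^\lambda_j\rangle=\delta_{ij}$), your key relation $\sum_i m^\lambda_i\, i_V^{\ast}(v^\lambda_i)=0$ is the paper's Lemma \ref{lemma: u,q,p,v}, and the case split (condition (2) fails $\Rightarrow$ one $v^\lambda_i=\pm q$ and all signs agree; condition (1) fails $\Rightarrow$ sign uniformity from the non-pointed projected cone; good vertex $\Rightarrow$ $|m^\lambda_j|=1$ with opposite signs elsewhere, giving a constant nonzero partial derivative) mirrors Sections \ref{subsec: case of orthogonal}--\ref{subsec: case of vertex}. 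The cone step you flag as the main obstacle is exactly what the paper carries out in Lemmas \ref{lemma: not vertex 1}--\ref{lemma: not vertex 3}, and your gcd argument for $|m^\lambda_j|=1$ is an equivalent repackaging of the primitivity argument in Lemma \ref{lemma: good vertex and almost delzant}.
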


Since complex hypersurfaces $\overline{C(V)}$ are symplectic toric manifolds with respect to the Hamiltonian $T^{n-1}$-action, we obtain the following:

\begin{cor}
\label{cor: good then delzant}
If $V$ is admissible with respect to the Delzant polytope $\Delta$ of the toric manifold $X$, then the convex polytope $i_{V}^{\ast}(\Delta)$ is a Delzant polytope of the complex hypersurface $\overline{C(V)}$ in $X$.
\end{cor}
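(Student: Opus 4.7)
The plan is to deduce the statement as an immediate three-step corollary, combining Theorem \ref{thm: main} with \cite[Theorem 4.20]{yamaguchi2023toric} and the classical Delzant theorem. Essentially all of the nontrivial content is already packaged into Theorem \ref{thm: main}, so the proof amounts to assembling the available pieces in the correct order.

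First, under the hypothesis that $\Delta$ is good with respect to $i_V^{\ast}$, Theorem \ref{thm: main} directly yields that $\overline{C(V)}$ is a smooth complex hypersurface of $X$. Next, as recalled in the introduction, \cite[Theorem 4.20]{yamaguchi2023toric} guarantees that whenever $\overline{C(V)}$ is a smooth complex submanifold, it inherits the structure of a symplectic toric manifold with respect to the Hamiltonian action of $i_V(T^{n-1}) \cong T^{n-1}$ obtained by restricting the $T^{n}$-action on $X$; moreover, its moment polytope coincides with the image $i_V^{\ast}(\Delta)$, which is precisely the moment polytope of the $(n-1)$-dimensional subtorus action on $X$.

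Finally, by Delzant's theorem \cite{MR984900}, the moment polytope of any symplectic toric manifold is automatically a Delzant polytope. Applying this to the symplectic toric manifold $\overline{C(V)}$ equipped with its $T^{n-1}$-action shows that $i_V^{\ast}(\Delta)$ is a Delzant polytope, and in fact is the Delzant polytope associated with $\overline{C(V)}$ under the Delzant correspondence. The only potential subtlety is securing the smoothness input required to invoke \cite[Theorem 4.20]{yamaguchi2023toric}, which is exactly what Theorem \ref{thm: main} supplies; beyond that, there is no genuine obstacle.
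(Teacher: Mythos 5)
Your proposal is correct and follows essentially the same route as the paper: Theorem \ref{thm: main} supplies smoothness, Proposition \ref{prop: toric} together with \cite[Theorem 4.20]{yamaguchi2023toric} (restated as Theorem \ref{thm: delzant for submanifold}) identifies $\overline{C(V)}$ as a symplectic toric manifold with moment polytope $i_V^{\ast}(\Delta)$, and the Delzant correspondence forces that polytope to be Delzant, exactly as in Corollary \ref{cor: delzant}. No gaps.
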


We also show the converse of Theorem \ref{thm: main}.

\begin{thm}[Theorem \ref{thm: torus equivariant submanifold to almost delzant}]
\label{thm: main2}
Let $V = \mathbb{R}p_{1} + \cdots + \mathbb{R}p_{n-1} + a$ be an $(n-1)$-dimensional affine subspace in $\mathfrak{t}^{n}$ with rational slope.
If the rank of the Jacobian matrix $Df^{\lambda}$ is equal to one at any point of the zero locus of $f^{\lambda}$ for any vertex $\lambda$ of the Delzant polytope $\Delta$ of the toric manifold $X$, then $V$ is admissible with respect to $\Delta$.
\end{thm}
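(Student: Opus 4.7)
The plan is to argue vertex-by-vertex, using the explicit binomial description of $f^{\lambda}$ established in \cite{yamaguchi2023toric}. In the local affine chart at a vertex $\lambda$, with coordinates $(z_{1},\ldots,z_{n})$ labelled by the edge direction vectors $v_{1}^{\lambda},\ldots,v_{n}^{\lambda}$, one has
\[
    f^{\lambda}(z) \;=\; z^{\alpha_{+}^{\lambda}} - c^{\lambda}\, z^{\alpha_{-}^{\lambda}}, \qquad c^{\lambda}\in\mathbb{C}^{\ast},
\]
where $\alpha_{\pm}^{\lambda}\in\mathbb{Z}_{\geq 0}^{n}$ have disjoint support and $\alpha^{\lambda}:=\alpha_{+}^{\lambda}-\alpha_{-}^{\lambda}\in\mathbb{Z}^{n}$ is the coordinate vector of the primitive generator $u$ of $\ker i_{V}^{\ast}\subset(\mathfrak{t}^{n})^{\ast}$ in the $\mathbb{Z}$-basis $\{v_{1}^{\lambda},\ldots,v_{n}^{\lambda}\}$. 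Both the hypothesis and the conclusion of the theorem will reduce to sign and divisibility conditions on this single integer vector $\alpha^{\lambda}$, and the proof consists of matching the two.

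The first step is a direct computation establishing that $\operatorname{rank}Df^{\lambda}=1$ at every point of $\{f^{\lambda}=0\}$ if and only if $\alpha_{+}^{\lambda}$ or $\alpha_{-}^{\lambda}$ equals a standard unit vector $e_{k}$ (or is zero). The easy direction: if $\alpha_{+}^{\lambda}=e_{k}$ then $\partial_{k}f^{\lambda}\equiv 1$, so the gradient never vanishes. For the contrapositive, if neither $\alpha_{+}^{\lambda}$ nor $\alpha_{-}^{\lambda}$ is zero or a unit vector, then each has either a coordinate $\geq 2$ or at least two coordinates in its support; picking $k\in\operatorname{supp}\alpha_{+}^{\lambda}$ and $l\in\operatorname{supp}\alpha_{-}^{\lambda}$ (plus, when needed, one additional index in the appropriate support) and setting exactly those coordinates of $z$ to zero produces a point of $\{f^{\lambda}=0\}$ at which every partial derivative of $f^{\lambda}$ vanishes.

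The second step translates Definitions \ref{def: vertex and no direction vectors are perp} and \ref{def: almost delzant} into conditions on $\alpha^{\lambda}$. Applying $i_{V}^{\ast}$ to $u=\sum_{i}\alpha_{i}^{\lambda}v_{i}^{\lambda}$ yields the unique (up to scale) linear relation $\sum_{i}\alpha_{i}^{\lambda}\,i_{V}^{\ast}(v_{i}^{\lambda})=0$ among the $n$ images $i_{V}^{\ast}(v_{i}^{\lambda})\in(\mathfrak{t}^{n-1})^{\ast}$. Using this relation, together with the primitivity of $u$ and the surjectivity of $i_{V}^{\ast}$ on character lattices, one shows that $\lambda$ is a good vertex if and only if $\alpha^{\lambda}$ has both strictly positive and strictly negative entries, and that for such a $\lambda$ the conditions of Definition \ref{def: almost delzant} hold if and only if there is an index $j$ with $|\alpha_{j}^{\lambda}|=1$ and $\alpha_{i}^{\lambda}\alpha_{j}^{\lambda}\leq 0$ for every $i\neq j$. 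The cone-equality clause becomes the sign inequality $-\alpha_{i}^{\lambda}/\alpha_{j}^{\lambda}\geq 0$ once $i_{V}^{\ast}(v_{j}^{\lambda})$ is expressed in the remaining basis, and the $\mathbb{Z}$-basis clause, combined with $\gcd_{i}\alpha_{i}^{\lambda}=1$, forces $|\alpha_{j}^{\lambda}|=1$.

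Given these two equivalences, the theorem is immediate. If $\operatorname{rank}Df^{\lambda}=1$ on $\{f^{\lambda}=0\}$ at every vertex $\lambda$, then at any good vertex the second step says both $\alpha_{\pm}^{\lambda}$ are nonzero, so the first step forces one of them---say $\alpha_{+}^{\lambda}$---to equal some $e_{k}$; choosing $j=k$ in the criterion of the second step then verifies Definition \ref{def: almost delzant}, and $\Delta$ is good. The main obstacle in carrying out this plan is the algebraic translation in the second step, in particular the cone-equality clause; the rank computation of the first step is case-heavy but each case is mechanical. A minor point is that the rank hypothesis is imposed also at non-good vertices, but since Definition \ref{def: almost delzant} places no demand on those vertices, they require no verification.
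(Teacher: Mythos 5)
Your proposal is correct, and it reaches the conclusion by a genuinely different route from the paper. The paper first invokes the Delzant correspondence for the smooth submanifold $\overline{C(V)}$ (its Theorem \ref{thm: delzant for submanifold} and Corollary \ref{cor: delzant}, imported from the earlier work) to conclude that $i_{V}^{\ast}(\Delta)$ is simple, which is how it secures the first bullet of Definition \ref{def: almost delzant} and justifies Assumption \ref{assumption: when good vertex}; it then reads off the dichotomy of Lemma \ref{lemma: nonsingular and good vertex lead to two cases} from the nonvanishing of $Df^{\lambda}$ at the origin of the chart alone; and it establishes the $\mathbb{Z}$-basis clause through a chain of Gram-determinant identities ($\det[p_{1}\,\cdots\,p_{n-1}\,q]=\pm\langle q,q\rangle$ combined with Lemmas \ref{lemma: det of gram and direction vectors 1st case} and \ref{lemma: det of gram and direction vectors}). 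You instead encode everything in the coordinate vector $\alpha^{\lambda}=(\langle u^{\lambda}_{1},q\rangle,\ldots,\langle u^{\lambda}_{n},q\rangle)$ of $q$ in the basis $v^{\lambda}_{1},\ldots,v^{\lambda}_{n}$: your first step gives a complete characterization of nonsingularity (exhibiting explicit rank-zero points on coordinate subspaces when neither $\alpha^{\lambda}_{+}$ nor $\alpha^{\lambda}_{-}$ is a unit vector or zero, which is strictly more than the paper's origin-only computation), and your second step derives both the cone equality and the lattice-basis condition directly from the single relation $\sum_{i}\alpha^{\lambda}_{i}i_{V}^{\ast}(v^{\lambda}_{i})=0$ together with the exact sequence $0\to\mathbb{Z}q\to\mathbb{Z}^{n}\to\mathbb{Z}^{n-1}\to 0$. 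This buys a self-contained, purely combinatorial proof that never appeals to Delzant's classification for the hypersurface, and as a by-product yields the two-way Corollary \ref{cor: main} in one stroke; the cost is that your second step compresses two equivalences (cone equality iff $\alpha^{\lambda}_{i}\alpha^{\lambda}_{j}\leq 0$ for $i\neq j$, and $\mathbb{Z}$-basis iff $|\alpha^{\lambda}_{j}|=1$) that should each be written out, the latter via $\det[v^{\lambda}_{1}\,\cdots\,\widehat{v^{\lambda}_{j}}\,\cdots\,v^{\lambda}_{n}\;q]=\pm\alpha^{\lambda}_{j}$ and the identification $\mathbb{Z}^{n-1}\cong\mathbb{Z}^{n}/\mathbb{Z}q$, and you should record explicitly that $\gcd_{i}\alpha^{\lambda}_{i}=1$ because $q$ is primitive and the $v^{\lambda}_{i}$ form a $\mathbb{Z}$-basis. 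I verified these translations; they hold, so there is no gap, only compression.
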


From Theorem \ref{thm: main} and Theorem \ref{thm: main2}, we obtain the following:

\begin{cor}
\label{cor: main}
Let $\Delta$ be a Delzant polytope of a symplectic toric manifold $X$.
Then, $\overline{C(V)}$ is a torus-equivariantly embedded toric hypersurface in $X$ 
if and only if 
$\Delta$ is a good Delzant polytope with respect to the map $i_{V}^{\ast}$.
\end{cor}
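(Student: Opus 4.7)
The plan is to derive Corollary \ref{cor: main} as a direct consequence of the two main theorems, Theorem \ref{thm: main} and Theorem \ref{thm: main2}, which together identify the property of being good with the Jacobian rank condition on the defining polynomials $f^{\lambda}$. Since the introduction defines a \emph{torus-equivariantly embedded toric hypersurface} to be precisely the situation in which $\overline{C(V)}$ is a smooth complex submanifold of codimension one, the task reduces to translating between smoothness of $\overline{C(V)}$ and the Jacobian condition, and then invoking the two theorems.

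For the implication from a good polytope to a torus-equivariantly embedded toric hypersurface, I would simply invoke Theorem \ref{thm: main}: if $\Delta$ is good with respect to $i_{V}^{\ast}$, then $\operatorname{rank} Df^{\lambda} = 1$ on the zero locus of $f^{\lambda}$ for every vertex $\lambda$, and the theorem's conclusion yields directly that $\overline{C(V)}$ is a smooth complex hypersurface in $X$. By definition this is what it means for $\overline{C(V)}$ to be torus-equivariantly embedded.

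For the converse, I would assume $\overline{C(V)}$ is a smooth complex hypersurface and argue that the Jacobian $Df^{\lambda}$ has rank one at every point of the zero locus of $f^{\lambda}$ in the affine chart around each vertex $\lambda$. Since $\overline{C(V)}$ is cut out in that chart by the single polynomial $f^{\lambda}$, and since a smooth complex submanifold of pure codimension one is locally the vanishing locus of a function with nonvanishing differential, the only way $Df^{\lambda}$ could fail to have rank one at a zero of $f^{\lambda}$ is if $f^{\lambda}$ vanished to higher order there, which is incompatible with the smooth hypersurface being the reduced zero locus. Once the rank-one condition is established, Theorem \ref{thm: main2} concludes that $\Delta$ is good with respect to $i_{V}^{\ast}$.

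The main obstacle will be the small reducedness check in the converse direction: one must be sure that in each vertex chart the polynomial $f^{\lambda}$ arising from the construction in \cite{yamaguchi2023toric} is genuinely the scheme-theoretic defining polynomial of $\overline{C(V)}$ rather than a power of one. Assuming the explicit description recalled just before Theorem \ref{thm: main}, the coefficients and exponents of $f^{\lambda}$ are determined by a primitive $\mathbb{Z}$-basis of $\overline{V} \cap \mathbb{Z}^{n}$, and the primitivity should force $f^{\lambda}$ to be reduced in each chart, so that smoothness of the hypersurface is indeed equivalent to $\operatorname{rank} Df^{\lambda} = 1$ on its zero locus. With that point verified, the corollary follows by combining Theorem \ref{thm: main} and Theorem \ref{thm: main2}.
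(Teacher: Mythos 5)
Your proposal matches the paper's argument: Corollary \ref{cor: main} is obtained there simply by combining Theorem \ref{thm: main} and Theorem \ref{thm: main2}, the paper having identified ``$\overline{C(V)}$ is nonsingular'' with the rank-one condition on each $Df^{\lambda}$ throughout. The reducedness point you flag is real but easily settled --- each $f^{\lambda}$ is a binomial with disjoint variable supports and nonzero constant factor, hence square-free, so smoothness of the zero locus is equivalent to the rank condition --- and the paper passes over this silently, so your extra check only makes the argument more complete.
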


Corollary \ref{cor: main} characterizes the condition when $\overline{C(V)}$ is a smooth complex hypersurface in terms of the combinatorics of Delzant polytopes.

\subsection{Sketch of the Proofs}
We explain the sketch of the proof of Theorem \ref{thm: main}.
First, in Section \ref{subsec: case of orthogonal} we show that if the vertex $\lambda$ of the Delzant polytope $\Delta$ does not satisfy the condition (2) in Definition \ref{def: vertex and no direction vectors are perp}, then the rank of the Jacobian matrix $Df^{\lambda}$ is equal to one.
Next, in Section \ref{subsec: case of not vertex} we show that if the vertex $\lambda$ satisfies the condition (2) but does not satisfy the condition (1) in Definition \ref{def: vertex and no direction vectors are perp}, then the rank of the Jacobian matrix $Df^{\lambda}$ is equal to one. 
Finally, in Section \ref{subsec: case of vertex} we consider the case when the vertex $\lambda$ is good with respect to the map $i_{V}^{\ast}$.
The detailed proof of Theorem \ref{thm: main} is given in Section \ref{subsec: proof}.

The proof of Theorem \ref{thm: main2} is based on the observation of the conditions of the sets $\mathcal{I}^{+}_{\lambda},\mathcal{I}^{-}_{\lambda}$, which are defined in Definition \ref{def: closure}.

\subsection{Outline}
This paper is organized as follows.
In Section \ref{sec: review}, we prepare for something about torus-equivariantly embedded toric hypersurfaces.
In Section \ref{sec: delzant of equiv toric}, we show Theorem \ref{thm: main}.
In Section \ref{sec: torus-equivariant to delzant}, we show Theorem \ref{thm: main2}.
In Section \ref{sec: example}, we demonstrate some examples to illustrate admissible subspaces.

\subsection*{Acknowlegements}
The author is grateful to the advisor, Manabu Akaho, for a lot of encouragements and supports.
The author would also like to thank Yuichi Kuno and Yasuhito Nakajima for helpful discussions, and would also like to thank the anonymous referee for helpful comments which improve the exposition. 
This work was supported by JST SPRING, Grant Number JPMJSP2156.

\section{Review of Torus-equivariantly Embedded Toric Manifolds}
\label{sec: review}

In this section, we review the settings in \cite{MR4938007}.
We also show some properties of the rank of the Jacobian matrix $Df^{\lambda}$ used in this paper.

\subsection{Symplectic Toric Manifolds} 
\label{subsec: toric manifolds}

In \cite{MR984900}, symplectic toric manifolds are completely classified by the certain convex polytopes, known as \textit{Delzant polytopes}:

\begin{defin}
\label{def: delzant polytopes}
A convex polytope $\Delta$ in $(\mathfrak{t}^{n})^{\ast} \cong \mathbb{R}^{n}$ is \textit{Delzant} if $\Delta$ satisfies the following three properties:
\begin{itemize}
    \item simple; each vertex has $n$ edges,
    \item rational; the direction vectors $v^{\lambda}_{1},\ldots, v^{\lambda}_{n}$ of the edges from any vertex $\lambda \in \Lambda$ can be chosen as integral vectors,
    \item smooth; the vectors $v^{\lambda}_{1},\ldots, v^{\lambda}_{n} \in \mathbb{Z}^{n}$ chosen as above form a $\mathbb{Z}$-basis of $\mathbb{Z}^{n}$.
\end{itemize}
Here, $\Lambda$ denotes the set of the vertices of $\Delta$.
\end{defin}

Some literatures (for example, \cite{MR2039163}) define Delzant polytopes in terms of the inward pointing normal vectors $u^{\lambda}_{1},\ldots, u^{\lambda}_{n}$ to the $n$ facets sharing each vertex instead of the direction vectors from each vertex.
In fact, these two ways to define Delzant polytopes are equivalent because of the following equation:
\begin{equation*}
    {^t [u^{\lambda}_{1} \; \cdots \; u^{\lambda}_{n} ]}[v^{\lambda}_{1} \; \cdots \; v^{\lambda}_{n}] = E_{n}
\end{equation*}
holds for any vertex $\lambda$ of $\Delta$ (see \cite[Lemma 3.10]{MR4938007} for example).

Due to the Delzant construction, we obtain the corresponding symplectic toric manifold $X$ from the data of a given Delzant polytope $\Delta$.
Here we give the expression of a system of the inhomogeneous coordinate charts on the corresponding symplectic toric manifold $X$ from the data of the Delzant polytope $\Delta$.
We define two matrices $Q^{\lambda} (= [Q^{\lambda}_{ij}])$ and $D^{\lambda\sigma} (= [d^{\lambda\sigma}_{ij}])$ by 
\begin{equation*}
    Q^{\lambda}
    =
    \left[
            v^{\lambda}_{1} \; \cdots \; v^{\lambda}_{n}
    \right], \;
    D^{\lambda\sigma} 
    =
    (Q^{\lambda})^{-1}Q^{\sigma}
\end{equation*}
for any vertices $\lambda,\sigma \in \Lambda$.

\begin{lemma}
From the data of a given Delzant polytope $\Delta$, we can construct an open covering $\{U_{\lambda}\}_{\lambda \in \Lambda}$ of the corresponding toric manifold $X$ and a set of maps $\{\varphi_{\lambda}:U_{\lambda}\to \mathbb{C}^{n}\}_{\lambda \in \Lambda}$ such that 
\begin{equation*}
    \varphi_{\sigma}\circ \varphi_{\lambda}^{-1} (z^{\lambda}) 
    = \left(
        \prod_{j=1}^{n}(z^{\lambda}_{j})^{d^{\lambda\sigma}_{j1}}, \ldots, \prod_{j=1}^{n}(z^{\lambda}_{j})^{d^{\lambda\sigma}_{jn}}
    \right)
\end{equation*}
for any vertices $\lambda,\sigma \in \Lambda$ such that $U_{\lambda}\cap U_{\sigma} \neq \emptyset$.
\end{lemma}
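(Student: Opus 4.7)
The plan is to realize $X$ via the Delzant construction as a symplectic reduction of $\mathbb{C}^N$, where $N$ is the number of facets of $\Delta$, obtain the charts $U_\lambda$ as the open pieces singling out each vertex, and then compute the transition functions using the duality identity ${}^t[u^\lambda_1 \cdots u^\lambda_n][v^\lambda_1 \cdots v^\lambda_n] = E_n$ already recorded just before the statement.

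First I would set up the reduction. Let $u_1, \ldots, u_N \in \mathbb{Z}^n$ be the primitive inward normals of the facets of $\Delta$, and let $K \subset T^N$ be the kernel of the induced surjection $T^N \to T^n$ defined by the $u_i$; then $X$ is the symplectic quotient of $(\mathbb{C}^N, \omega_{\mathrm{std}})$ by $K$ at the level determined by $\Delta$. For each vertex $\lambda \in \Lambda$, let $I_\lambda \subset \{1, \ldots, N\}$ be the $n$ indices of the facets meeting at $\lambda$, and define
\[ U_\lambda = \{[w] \in X : w_i \neq 0 \text{ for all } i \notin I_\lambda\}. \]
Since the residual moment map sends $X$ onto $\Delta$ and every point of $\Delta$ lies in the star of some vertex, the family $\{U_\lambda\}_{\lambda \in \Lambda}$ covers $X$. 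The smoothness condition on $\Delta$ (the vectors $u^\lambda_i$, $i \in I_\lambda$, form a $\mathbb{Z}$-basis of $\mathbb{Z}^n$) ensures that $K_{\mathbb{C}}$ acts freely on the locus $\{w \in \mathbb{C}^N : w_i \neq 0 \text{ for all } i \notin I_\lambda\}$, with a unique representative in each orbit satisfying $w_i = 1$ for all $i \notin I_\lambda$. The remaining coordinates $(w_i)_{i \in I_\lambda}$, ordered so as to match the edge direction vectors $v^\lambda_1, \ldots, v^\lambda_n$, then define a biholomorphism $\varphi_\lambda : U_\lambda \to \mathbb{C}^n$.

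To compute $\varphi_\sigma \circ \varphi_\lambda^{-1}$ on the overlap $U_\lambda \cap U_\sigma$, I would start from the representative in $\mathbb{C}^N$ normalized for $\lambda$ and apply the unique element $t \in K_{\mathbb{C}}$ that renormalizes the coordinates with indices outside $I_\sigma$ to $1$; reading off the coordinates of $t \cdot w$ at indices in $I_\sigma$ gives the $z^\sigma$'s as monomials in the $z^\lambda$'s. The exponents reduce to a $\mathbb{Z}^n$-level computation in which the $u$-basis at $\lambda$ is dual to the $v$-basis at $\lambda$ (by the identity ${}^t U^\lambda V^\lambda = E_n$), and likewise at $\sigma$. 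Transporting the change of basis $Q^\sigma = Q^\lambda D^{\lambda\sigma}$ through these dualities produces exactly the monomial formula with exponent matrix $D^{\lambda\sigma}$ appearing in the statement.

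I expect the main obstacle to be the bookkeeping in this last step: verifying that, after passing through the various dualities between $u$-bases and $v$-bases, what appears in the exponents is precisely the matrix $D^{\lambda\sigma} = (Q^\lambda)^{-1} Q^\sigma$ rather than its transpose, inverse, or some composition thereof. The identity ${}^t U^\lambda V^\lambda = E_n$ is exactly what makes the $u$-indexing natural for the $K_{\mathbb{C}}$-action on $\mathbb{C}^N$ and the $v$-indexing natural for the chart coordinates $z^\lambda$ match up, so that the two changes of basis are related by transpose and the transposes cancel to leave $D^{\lambda\sigma}$ itself in the final formula.
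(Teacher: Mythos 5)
Your proposal is correct and follows essentially the same route as the paper, which states this lemma without proof and defers to the Delzant construction carried out in \cite[Sections 2.1 and 3.2]{yamaguchi2023toric}: symplectic reduction of $\mathbb{C}^{N}$ by the subtorus $K$ determined by the facet normals, vertex charts obtained by normalizing the coordinates outside $I_{\lambda}$ via the free $K_{\mathbb{C}}$-action, and transition maps read off through the duality $\langle u^{\lambda}_{i},v^{\lambda}_{j}\rangle = \delta_{ij}$. The bookkeeping you flag does close as you expect: writing $v^{\sigma}_{k} = \sum_{j} d^{\lambda\sigma}_{jk} v^{\lambda}_{j}$ (the columns of $Q^{\sigma} = Q^{\lambda}D^{\lambda\sigma}$) and using $z^{\lambda}_{j} = \chi^{v^{\lambda}_{j}}$ on the open orbit gives exactly $z^{\sigma}_{k} = \prod_{j}(z^{\lambda}_{j})^{d^{\lambda\sigma}_{jk}}$ with no stray transpose or inverse.
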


The detailed construction in terms of the notation used here is given in \cite[Section 2.1 and Section 3.2]{MR4938007}.
Moreover, the complement of toric divisors in a $2n$-dimensional symplectic toric manifold can be identified with a complex $n$-dimensional torus.

\subsection{Torus-equivariantly Embedded Toric Hypersurfaces}
\label{subsec: torus-equivariant}

Let $p_{1},\ldots,p_{n-1} \in \mathbb{Z}^{n}$ be primitive vectors which are linearly independent, and $a \in \mathbb{R}^{n}$.
We consider the $(n-1)$-dimensional affine subspace $V = \mathbb{R}p_{1} + \cdots + \mathbb{R}p_{n-1} + a$ in $\mathfrak{t}^{n} \cong \mathbb{R}^{n}$.
Write $\vec{V}$ as the linear part of $V$. 
Assume that $V$ has a rational slope, i.e. we may assume that $p_{1},\ldots, p_{n-1} \in \mathbb{Z}^{n}$ form a $\mathbb{Z}$-basis of $\vec{V} \cap \mathbb{Z}^{n} \cong \mathbb{Z}^{n-1}$.
Let $q \in \mathbb{Z}^{n}$ be a primitive basis of the orthogonal subspace to $\vec{V}$ in $\mathfrak{t}^{n} \cong \mathbb{R}^{n}$.

Through the log-affine coordinate $(\mathbb{C}^{\ast})^{n} \cong \mathfrak{t}^{n} \times T^{n}$, we define the complex subtorus $C(V) \cong V \times T^{n-1}$ (see \cite[Proposition 4.2]{MR4938007} for detail).
Recall that the complement of toric divisors in a toric manifold $X$ can be identified with a complex $n$-dimensional torus $(\mathbb{C}^{\ast})^{n}$.
The closure $\overline{C(V)}$ of $C(V)$ in the toric manifold $X$ is expressed as follows:

\begin{defin}
\label{def: closure}
The closure $\overline{C(V)}$ of $C(V)$ in the toric manifold $X$ is defined as $\overline{C(V)} = \bigcup_{\lambda \in \Lambda} \varphi_{\lambda}^{-1}(\overline{C_{\lambda}(V)})$, where $\overline{C_{\lambda}(V)} \subset \varphi_{\lambda}(U_{\lambda}) \cong \mathbb{C}^{n}$ is the zero locus of $f^{\lambda}$. 
Here, $f^{\lambda}$ are defined by 
\begin{equation*}
    f^{\lambda}(z^{\lambda})
    =
    \prod_{j \in \mathcal{I}^{+}_{\lambda}\setminus \mathcal{I}^{0}_{\lambda}}(z^{\lambda}_{j})^{\langle u^{\lambda}_{j}, q \rangle}
    -
    e^{\langle a, q \rangle}
    \prod_{j \in \mathcal{I}^{-}_{\lambda}\setminus \mathcal{I}^{0}_{\lambda}}(z^{\lambda}_{j})^{-\langle u^{\lambda}_{j}, q \rangle},
\end{equation*}
where
\begin{align*}
    &\mathcal{I}^{+}_{\lambda}
    = \{i \in \{1,\ldots, n\} \mid \langle u^{\lambda}_{i},q \rangle \geq 0 \}, \\
    &\mathcal{I}^{-}_{\lambda}
    = \{i \in \{1,\ldots, n\} \mid \langle u^{\lambda}_{i},q \rangle \leq 0 \},\\
    &\mathcal{I}^{0}_{\lambda}
    = \{i \in \{1,\ldots, n\} \mid \langle u^{\lambda}_{i},q \rangle = 0 \}.
\end{align*}
\end{defin}
Note that if $\mathcal{I}^{+}_{\lambda} \setminus \mathcal{I}^{0}_{\lambda} = \emptyset$, then $\prod_{j \in \mathcal{I}^{+}_{\lambda} \setminus \mathcal{I}^{0}_{\lambda}} (z^{\lambda}_{j})^{\langle u^{\lambda}_{j},q \rangle} = 1$.
Similarly, if $\mathcal{I}^{-}_{\lambda} \setminus \mathcal{I}^{0}_{\lambda} = \emptyset$, then $\prod_{j \in \mathcal{I}^{-}_{\lambda} \setminus \mathcal{I}^{0}_{\lambda}} (z^{\lambda}_{j})^{-\langle u^{\lambda}_{j},q \rangle} = 1$.

Also, $\overline{C(V)}$ can be reprenseted as the zero locus of the polynomial $f$ of $X$ with $f|_{\varphi_{\lambda}(U_{\lambda})} = f^{\lambda}$.
We define a singular point of $\overline{C(V)}$ in $X$ by the point where the Jacobian matrix $Df^{\lambda}$ of $f^{\lambda}$ becomes zero for some $\lambda$. In this case, we say that $\overline{C(V)}$ has a singularity.
Similarly, we say that $\overline{C(V)}$ is nonsingular if $\overline{C(V)}$ has no singular point.

\begin{remark}
The rank of the Jacobian matrix $Df^{\lambda}$ of $f^{\lambda}$ might be equal to zero at some points in the toric divisors.
In particular, $\overline{C(V)}$ might have a singularity at some points in the toric divisors.
\end{remark}

Note that if $\overline{C(V)}$ is a smooth complex submanifold in the toric manifold $X$, then $\overline{C(V)}$ has the effective Hamiltonian $T^{n-1}$-action through the map $i_{V}:T^{n-1} \to T^{n}$ and the $T^{n}$-action on $X$ (see \cite{MR4938007}).

\begin{prop}\label{prop: toric}
If $\overline{C(V)}$ is a complex submanifold in the toric manifold $X$, then $\overline{C(V)}$ is toric with respect to the $T^{n-1}$-action on $\overline{C(V)}$. 
\end{prop}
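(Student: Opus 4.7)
The plan is to exhibit $\overline{C(V)}$ as a symplectic toric manifold by restricting all the ambient structure on $X$ through the embedding $i_V$, and then to invoke the general $k$-dimensional result of \cite[Theorem 4.20]{yamaguchi2023toric} (the proposition being essentially its specialization to $k = n-1$). Since $\overline{C(V)}$ is by assumption a smooth complex submanifold of the K\"ahler manifold $X$, the restriction of the K\"ahler form on $X$ is again K\"ahler, hence provides a $T^{n}$-invariant symplectic form $\omega|_{\overline{C(V)}}$ of real dimension $2(n-1)$.

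First I would verify that $\overline{C(V)}$ is preserved by the $T^{n-1}$-action on $X$ induced through the homomorphism $i_V: T^{n-1} \to T^{n}$. On the open dense subset $C(V) \cong V \times T^{n-1}$ inside $(\mathbb{C}^{\ast})^{n}$, the subgroup $i_V(T^{n-1})$ acts by translation on the second factor and thus preserves $C(V)$; by continuity of the action and closedness of $\overline{C(V)}$, the closure is also invariant. Next I would show that this $T^{n-1}$-action is Hamiltonian on $\overline{C(V)}$: if $\mu: X \to (\mathfrak{t}^{n})^{\ast}$ denotes the moment map for the ambient $T^{n}$-action, then $i_V^{\ast} \circ \mu$ is a moment map for the restricted $T^{n-1}$-action on $X$, and restricting this together with the symplectic form to the complex submanifold $\overline{C(V)}$ yields a moment map for the induced $T^{n-1}$-action on $\overline{C(V)}$.

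Finally I would establish effectiveness. On $C(V)$ the translation action of $T^{n-1}$ on the torus factor is free and hence effective; since $C(V)$ is dense in $\overline{C(V)}$ and the action is continuous, effectiveness transfers to the closure. Combined with the dimension count $\dim_{\mathbb{R}} \overline{C(V)} = 2(n-1) = 2 \dim T^{n-1}$, this shows that $(\overline{C(V)}, \omega|_{\overline{C(V)}}, T^{n-1})$ satisfies all the axioms of a symplectic toric manifold.

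The main potential obstacle lies in the points of $\overline{C(V)} \setminus C(V)$, where $\overline{C(V)}$ meets the toric divisors of $X$: one must check that both the group action and the restricted moment map behave well there. Under the standing hypothesis that $\overline{C(V)}$ is a smooth complex submanifold, no singular behavior occurs, and the verification reduces to the general framework developed in \cite[Section 4]{yamaguchi2023toric}; the present statement is then obtained by taking $k = n-1$ in that reference.
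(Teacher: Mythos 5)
Your proposal is correct and follows essentially the same route as the paper: both arguments restrict the K\"ahler form of $X$ to the complex submanifold $\overline{C(V)}$, take $i_{V}^{\ast}\circ\mu$ composed with the inclusion as the moment map for the $T^{n-1}$-action induced through $i_{V}$, and deduce effectiveness from the free translation action on the dense open subtorus $C(V)\cong V\times i_{V}(T^{n-1})$. The only additions in your write-up (the invariance of the closure under the action and the dimension count) are points the paper leaves implicit, so there is no substantive difference.
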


The vectors $q, v^{\lambda}_{1},\ldots, v^{\lambda}_{n}, u^{\lambda}_{1},\ldots, u^{\lambda}_{n}$ used here have the following relation:

\begin{lemma}[{\cite[Lemma 4.14]{MR4938007}}]
\label{lemma: u,q,p,v}
\begin{equation*}
    \sum_{j=1}^{n}\langle u^{\lambda}_{j},q \rangle i_{V}^{\ast}(v^{\lambda}_{j})
    = 0
\end{equation*}
holds.
\end{lemma}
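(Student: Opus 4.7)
The plan is to use the duality between the bases $\{u^{\lambda}_{j}\}$ and $\{v^{\lambda}_{j}\}$ together with the orthogonality $q\perp\overline{V}$.

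First I would invoke the Delzant relation recalled in Section \ref{subsec: toric manifolds}, namely
\begin{equation*}
{^t}[u^{\lambda}_{1}\;\cdots\;u^{\lambda}_{n}]\,[v^{\lambda}_{1}\;\cdots\;v^{\lambda}_{n}]=E_{n}.
\end{equation*}
Since the matrix on the right is invertible, the factor matrices commute in the product up to order, and one gets equivalently
\begin{equation*}
\sum_{j=1}^{n} v^{\lambda}_{j}\,{^t}u^{\lambda}_{j}=E_{n}.
\end{equation*}
In other words, $\{u^{\lambda}_{j}\}$ and $\{v^{\lambda}_{j}\}$ are dual bases, so any vector expands via $w=\sum_{j}\langle u^{\lambda}_{j},w\rangle v^{\lambda}_{j}$. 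Specializing to $w=q\in\mathbb{Z}^{n}$ yields
\begin{equation*}
\sum_{j=1}^{n}\langle u^{\lambda}_{j},q\rangle\, v^{\lambda}_{j}=q.
\end{equation*}

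Next I would apply the linear map $i_{V}^{\ast}:(\mathfrak{t}^{n})^{\ast}\to(\mathfrak{t}^{n-1})^{\ast}$ to both sides, which gives
\begin{equation*}
\sum_{j=1}^{n}\langle u^{\lambda}_{j},q\rangle\, i_{V}^{\ast}(v^{\lambda}_{j})=i_{V}^{\ast}(q),
\end{equation*}
so the lemma reduces to the claim $i_{V}^{\ast}(q)=0$. To see this, note that by the definition of $i_{V}$ from Section \ref{subsec: torus-equivariant}, the induced map $(i_{V})_{\ast}:\mathfrak{t}^{n-1}\to\mathfrak{t}^{n}$ sends the $l$-th standard basis vector to $p_{l}$, so its image is exactly $\overline{V}$. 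Hence for every standard basis vector $\tilde e_{l}$ of $\mathfrak{t}^{n-1}$,
\begin{equation*}
\langle i_{V}^{\ast}(q),\tilde e_{l}\rangle=\langle q,(i_{V})_{\ast}\tilde e_{l}\rangle=\langle q,p_{l}\rangle=0,
\end{equation*}
since $q$ is by construction a primitive generator of the orthogonal complement of $\overline{V}$. Therefore $i_{V}^{\ast}(q)=0$, which completes the argument.

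I do not anticipate a real obstacle here; the statement is essentially a bookkeeping identity. The only point that needs care is tracking the identifications $\mathfrak{t}^{n}\cong\mathbb{R}^{n}\cong(\mathfrak{t}^{n})^{\ast}$ used implicitly to make sense of the pairings $\langle u^{\lambda}_{j},q\rangle$ and of the orthogonality $q\perp\overline{V}$, which is where the dual-basis identity $\sum_{j}v^{\lambda}_{j}\,{^t}u^{\lambda}_{j}=E_{n}$ and the orthogonality condition on $q$ get combined.
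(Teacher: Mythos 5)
Your argument is correct: the dual-basis relation ${}^t[u^{\lambda}_{1}\;\cdots\;u^{\lambda}_{n}][v^{\lambda}_{1}\;\cdots\;v^{\lambda}_{n}]=E_{n}$ gives $q=\sum_{j}\langle u^{\lambda}_{j},q\rangle v^{\lambda}_{j}$, and applying $i_{V}^{\ast}$ kills $q$ because $i_{V}^{\ast}(\xi)=(\langle p_{1},\xi\rangle,\ldots,\langle p_{n-1},\xi\rangle)$ and $q\perp\overline{V}$. The paper itself only cites this lemma from the earlier work without reproving it, but your derivation is exactly the natural one from the ingredients recalled here and matches the cited proof in spirit.
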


In this case, the rank of the Jacobian matrix $Df^{\lambda}$ is independent of the choice of $a \in \mathbb{R}^{n}$ of the affine subspace $V = \mathbb{R}p_{1} + \cdots + \mathbb{R}p_{n-1} + a$ \cite[Proposition 2.12]{yamaguchi2024delzant}, i.e. we may assume that $a = 0$.

\subsection{Properties of the Rank of the Jacobian Matrices}
\label{subsec: rank of torus-equivariant}

In this section, we prepare for the proof of the main results.

\begin{lemma}
\label{lemma: sign only leads to nonsingular}
Let $\lambda$ be a vertex of $\Delta$.
\begin{itemize}
    \item If $\mathcal{I}^{+}_{\lambda} \setminus \mathcal{I}^{0}_{\lambda} = \emptyset$, then $\overline{C_{\lambda}(V)} \cap \{z^{\lambda}_{j} = 0\} = \emptyset$ for any $j \in \mathcal{I}^{-}_{\lambda} \setminus \mathcal{I}^{0}_{\lambda}$.
    \item If $\mathcal{I}^{-}_{\lambda} \setminus \mathcal{I}^{0}_{\lambda} = \emptyset$, then $\overline{C_{\lambda}(V)} \cap \{z^{\lambda}_{j} = 0\} = \emptyset$ for any $j \in \mathcal{I}^{+}_{\lambda} \setminus \mathcal{I}^{0}_{\lambda}$.
\end{itemize}
\end{lemma}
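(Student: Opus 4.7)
The plan is to argue directly from the explicit formula for $f^{\lambda}$ given in Definition \ref{def: closure}, handling the two bulleted cases by symmetry. The key observation is that in each case one of the two product terms collapses to a constant, and substituting $z^{\lambda}_{j} = 0$ for an index $j$ in the remaining index set makes the other product vanish because the exponent appearing there is strictly positive by construction.

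More concretely, for the first bullet, suppose $\mathcal{I}^{+}_{\lambda} \setminus \mathcal{I}^{0}_{\lambda} = \emptyset$. Then by the convention recorded just after Definition \ref{def: closure}, the first product in $f^{\lambda}$ equals $1$, so that
\begin{equation*}
    f^{\lambda}(z^{\lambda}) = 1 - e^{\langle a, q \rangle} \prod_{j \in \mathcal{I}^{-}_{\lambda}\setminus \mathcal{I}^{0}_{\lambda}}(z^{\lambda}_{j})^{-\langle u^{\lambda}_{j}, q \rangle}.
\end{equation*}
For any $j \in \mathcal{I}^{-}_{\lambda} \setminus \mathcal{I}^{0}_{\lambda}$ the exponent $-\langle u^{\lambda}_{j}, q \rangle$ is a strictly positive integer, so setting $z^{\lambda}_{j} = 0$ makes the remaining product equal to zero, and hence $f^{\lambda} = 1 \neq 0$. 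This shows that no point of $\overline{C_{\lambda}(V)}$ lies in the hyperplane $\{z^{\lambda}_{j} = 0\}$. The second bullet is verbatim symmetric: if $\mathcal{I}^{-}_{\lambda} \setminus \mathcal{I}^{0}_{\lambda} = \emptyset$, then $f^{\lambda}$ reduces to a single monomial minus the constant $e^{\langle a, q \rangle}$, and the same positive-exponent argument forces $f^{\lambda} = -e^{\langle a, q \rangle} \neq 0$ whenever $z^{\lambda}_{j} = 0$ for some $j \in \mathcal{I}^{+}_{\lambda} \setminus \mathcal{I}^{0}_{\lambda}$.

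There is no real obstacle here; the statement is essentially a sanity check unpacking the definition of $f^{\lambda}$ together with the conventions about empty products. The only subtle point worth flagging explicitly is that the exponents $\langle u^{\lambda}_{j}, q\rangle$ (respectively $-\langle u^{\lambda}_{j}, q\rangle$) appearing on a nonempty summand are strict positive integers, i.e. strictly greater than zero, which is exactly the content of removing $\mathcal{I}^{0}_{\lambda}$ from the relevant index set. This strictness is what guarantees that the monomial actually vanishes when one of its variables is set to zero, rather than being formally undefined or equal to $1$.
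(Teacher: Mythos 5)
Your proof is correct and follows essentially the same route as the paper: in each case one product collapses to a constant (the paper normalizes $a=0$ so $e^{\langle a,q\rangle}=1$), and the strictly positive exponents on the remaining nonempty product force $f^{\lambda}\neq 0$ on the coordinate hyperplanes in question. You simply spell out the positivity of the exponents more explicitly than the paper does; no gap.
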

\begin{proof}
We assume that $\mathcal{I}^{+}_{\lambda} \setminus \mathcal{I}^{0}_{\lambda} = \emptyset$. 
Then, since the defining equation $f^{\lambda}(z^{\lambda}) = 0$ for $\overline{C_{\lambda}(V)}$ is 
\begin{equation*}
    f^{\lambda}(z^{\lambda})
    =
    1 - \prod_{j \in \mathcal{I}^{-}_{\lambda}\setminus \mathcal{I}^{0}_{\lambda}} (z^{\lambda}_{j})^{-\langle u^{\lambda}_{j}, q \rangle},
\end{equation*}
we obtain that $\overline{C_{\lambda}(V)} \cap \{z^{\lambda}_{j} = 0\} = \emptyset$ for any $j \in \mathcal{I}^{-}_{\lambda} \setminus \mathcal{I}^{0}_{\lambda}$.

Similarly, if we assume that $\mathcal{I}^{-}_{\lambda} \setminus \mathcal{I}^{0}_{\lambda} = \emptyset$, then we obtain $\overline{C_{\lambda}(V)} \cap \{z^{\lambda}_{j} = 0\} = \emptyset$ for any $j \in \mathcal{I}^{+}_{\lambda} \setminus \mathcal{I}^{0}_{\lambda}$.
\end{proof}

\begin{lemma}
\label{lemma: rank of zero locus only sign}
If $\mathcal{I}^{+}_{\lambda} \setminus \mathcal{I}^{0}_{\lambda} = \emptyset$ (or $\mathcal{I}^{-}_{\lambda} \setminus \mathcal{I}^{0}_{\lambda} = \emptyset$), then $\overline{C_{\lambda}(V)}$ is nonsingular.
\end{lemma}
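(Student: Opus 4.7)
The plan is to argue by a direct computation of a single partial derivative of $f^{\lambda}$ and show it is nonvanishing on the zero locus. By symmetry between the two cases (swapping the roles of $\mathcal{I}^{+}_{\lambda}$ and $\mathcal{I}^{-}_{\lambda}$ essentially amounts to replacing $q$ by $-q$), I would assume without loss of generality that $\mathcal{I}^{+}_{\lambda}\setminus\mathcal{I}^{0}_{\lambda}=\emptyset$, so that the defining equation on $\varphi_{\lambda}(U_{\lambda})$ simplifies to
\begin{equation*}
    f^{\lambda}(z^{\lambda})=1-\prod_{j\in\mathcal{I}^{-}_{\lambda}\setminus\mathcal{I}^{0}_{\lambda}}(z^{\lambda}_{j})^{-\langle u^{\lambda}_{j},q\rangle}.
\end{equation*}

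First I would note that $\mathcal{I}^{-}_{\lambda}\setminus\mathcal{I}^{0}_{\lambda}$ is nonempty: the vectors $u^{\lambda}_{1},\ldots,u^{\lambda}_{n}$ form a $\mathbb{Z}$-basis of $\mathbb{Z}^{n}$ while $q\neq 0$, so not all pairings $\langle u^{\lambda}_{j},q\rangle$ can vanish, and by hypothesis none are positive, so at least one is strictly negative. Pick such an index $k\in\mathcal{I}^{-}_{\lambda}\setminus\mathcal{I}^{0}_{\lambda}$, and set $n_{j}=-\langle u^{\lambda}_{j},q\rangle$ for each $j$ in this index set; these are strictly positive integers, so $f^{\lambda}$ really is a polynomial.

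Next I would differentiate with respect to $z^{\lambda}_{k}$ to obtain
\begin{equation*}
    \frac{\partial f^{\lambda}}{\partial z^{\lambda}_{k}}(z^{\lambda})=-n_{k}(z^{\lambda}_{k})^{n_{k}-1}\prod_{j\in\mathcal{I}^{-}_{\lambda}\setminus\mathcal{I}^{0}_{\lambda},\,j\neq k}(z^{\lambda}_{j})^{n_{j}}.
\end{equation*}
At any point of the zero locus of $f^{\lambda}$ we have $\prod_{j}(z^{\lambda}_{j})^{n_{j}}=1$, and in particular Lemma \ref{lemma: sign only leads to nonsingular} guarantees $z^{\lambda}_{k}\neq 0$, so this partial derivative reduces to $-n_{k}/z^{\lambda}_{k}\neq 0$. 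Therefore $Df^{\lambda}$ has rank one at every point of the zero locus, and hence $\overline{C_{\lambda}(V)}$ is nonsingular.

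There is no real obstacle here — the only subtlety worth flagging is ensuring that the index set $\mathcal{I}^{-}_{\lambda}\setminus\mathcal{I}^{0}_{\lambda}$ is indeed nonempty so that a candidate variable for differentiation exists. This is handled by the basis/primitivity observation above, after which the computation is essentially immediate because evaluating the product at a point of $\{f^{\lambda}=0\}$ absorbs all the other variables and leaves only $-n_{k}/z^{\lambda}_{k}$.
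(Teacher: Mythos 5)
Your proposal is correct and follows essentially the same route as the paper: reduce to the case $\mathcal{I}^{+}_{\lambda}\setminus\mathcal{I}^{0}_{\lambda}=\emptyset$, observe that the zero locus avoids all hyperplanes $\{z^{\lambda}_{j}=0\}$ for $j\in\mathcal{I}^{-}_{\lambda}\setminus\mathcal{I}^{0}_{\lambda}$ (the content of Lemma \ref{lemma: sign only leads to nonsingular}), and conclude that a partial derivative is nonvanishing there. Your explicit check that $\mathcal{I}^{-}_{\lambda}\setminus\mathcal{I}^{0}_{\lambda}\neq\emptyset$ (via $q\neq 0$ and the $u^{\lambda}_{j}$ forming a basis) is a small point the paper leaves implicit, but otherwise the arguments coincide.
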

\begin{proof}
Assume that $\mathcal{I}^{+}_{\lambda} \setminus \mathcal{I}^{0}_{\lambda} = \emptyset$. From the proof of Lemma \ref{lemma: sign only leads to nonsingular}, the defining equation $f^{\lambda}(z^{\lambda}) = 0$ for $\overline{C_{\lambda}(V)}$ is 
\begin{equation*}
    f^{\lambda}(z^{\lambda})
    =
    1 - \prod_{j \in \mathcal{I}^{-}_{\lambda}\setminus \mathcal{I}^{0}_{\lambda}} (z^{\lambda}_{j})^{-\langle u^{\lambda}_{j}, q \rangle}.
\end{equation*}
Since we obtain 
\begin{equation*}
    \frac{\partial f^{\lambda}}{\partial z^{\lambda}_{i}}
    =
    \begin{cases*}
        \langle u^{\lambda}_{i},q\rangle  \prod_{j \in \mathcal{I}^{-}_{\lambda}\setminus \mathcal{I}^{0}_{\lambda}} (z^{\lambda}_{j})^{-\langle u^{\lambda}_{j},q \rangle- \delta_{ij}} & if $i \in \mathcal{I}^{-}_{\lambda}\setminus \mathcal{I}^{0}_{\lambda}$, \\
        0 & otherwise,
    \end{cases*}
\end{equation*}
the points where the rank of the Jacobian matrix $Df^{\lambda}$ of $f^{\lambda}$ is equal to zero should be in $\overline{C_{\lambda}(V)} \cap (\bigcup_{j \in \mathcal{I}^{-}_{\lambda}\setminus\mathcal{I}^{0}_{\lambda}}\{z^{\lambda}_{j}=0 \})$.
However, since $\overline{C_{\lambda}(V)} \cap (\bigcup_{j \in \mathcal{I}^{-}_{\lambda}\setminus\mathcal{I}^{0}_{\lambda}}\{z^{\lambda}_{j}=0 \}) = \emptyset$ from Lemma \ref{lemma: sign only leads to nonsingular}, the rank of $Df^{\lambda}$ is equal to one, i.e. $\overline{C_{\lambda}(V)}$ does not have a singular point.

Similarly, we assume that $\mathcal{I}^{-}_{\lambda} \setminus \mathcal{I}^{0}_{\lambda} = \emptyset$, then we obtain that the rank of $Df^{\lambda}$ is equal to one.
\end{proof}

\begin{lemma}
\label{lemma: splitting}
Assume that 
\begin{equation*}
    \mathcal{I}^{+}_{\lambda} = \{1,\ldots,n-1\}, \; \mathcal{I}^{-}_{\lambda} \setminus \mathcal{I}^{0}_{\lambda} = \{n\}
\end{equation*}
or
\begin{equation*}
    \mathcal{I}^{+}_{\lambda} \setminus \mathcal{I}^{0}_{\lambda}= \{n\}, \; \mathcal{I}^{-}_{\lambda} = \{1,\ldots,n-1\}.
\end{equation*}
If $\langle u^{\lambda}_{n},q \rangle = \pm 1$, then $\overline{C_{\lambda}(V)}$ is nonsingular.
\end{lemma}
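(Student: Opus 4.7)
The plan is to write $f^{\lambda}$ out explicitly under each of the two hypothesized configurations of $\mathcal{I}^{\pm}_{\lambda}$, and to observe that the hypothesis $\langle u^{\lambda}_{n}, q \rangle = \pm 1$ forces $f^{\lambda}$ to be affine-linear in the coordinate $z^{\lambda}_{n}$. Once this is in hand, one column of the Jacobian $Df^{\lambda}$ is a nonzero constant, so the rank is identically one on the zero locus.

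First I would handle the case $\mathcal{I}^{+}_{\lambda} = \{1,\ldots, n-1\}$ and $\mathcal{I}^{-}_{\lambda} \setminus \mathcal{I}^{0}_{\lambda} = \{n\}$. Since $n \in \mathcal{I}^{-}_{\lambda} \setminus \mathcal{I}^{0}_{\lambda}$, the pairing $\langle u^{\lambda}_{n}, q \rangle$ is strictly negative, so the assumption $\langle u^{\lambda}_{n}, q\rangle = \pm 1$ forces $\langle u^{\lambda}_{n}, q \rangle = -1$. Substituting into Definition \ref{def: closure} and noting $-\langle u^{\lambda}_{n}, q \rangle = 1$, one obtains
\begin{equation*}
    f^{\lambda}(z^{\lambda})
    =
    \prod_{j \in \mathcal{I}^{+}_{\lambda} \setminus \mathcal{I}^{0}_{\lambda}} (z^{\lambda}_{j})^{\langle u^{\lambda}_{j}, q \rangle}
    - e^{\langle a, q \rangle}\, z^{\lambda}_{n},
\end{equation*}
so that $\partial f^{\lambda}/\partial z^{\lambda}_{n} = -e^{\langle a, q \rangle}$, which never vanishes. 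Hence the $n$-th column of $Df^{\lambda}$ is nowhere zero, and the rank of $Df^{\lambda}$ is one at every point, in particular on the zero locus $\overline{C_{\lambda}(V)}$.

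The second case, $\mathcal{I}^{+}_{\lambda} \setminus \mathcal{I}^{0}_{\lambda} = \{n\}$ and $\mathcal{I}^{-}_{\lambda} = \{1,\ldots, n-1\}$, is handled symmetrically: now $n \in \mathcal{I}^{+}_{\lambda} \setminus \mathcal{I}^{0}_{\lambda}$ forces $\langle u^{\lambda}_{n}, q \rangle = +1$, and one gets
\begin{equation*}
    f^{\lambda}(z^{\lambda})
    =
    z^{\lambda}_{n} - e^{\langle a, q \rangle}\prod_{j \in \mathcal{I}^{-}_{\lambda} \setminus \mathcal{I}^{0}_{\lambda}} (z^{\lambda}_{j})^{-\langle u^{\lambda}_{j}, q \rangle},
\end{equation*}
whence $\partial f^{\lambda}/\partial z^{\lambda}_{n} = 1$, and again $Df^{\lambda}$ has rank one everywhere.

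There is no real obstacle here: the essential point is the algebraic observation that $|\langle u^{\lambda}_{n}, q \rangle| = 1$, combined with the condition that $n$ is the \emph{only} index on its side of the sign partition, collapses $f^{\lambda}$ to a polynomial that is linear in $z^{\lambda}_{n}$ with a unit coefficient. This makes the zero locus a graph over the other coordinates and the smoothness assertion then is immediate from inspection of the Jacobian (equivalently, from the implicit function theorem).
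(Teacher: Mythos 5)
Your proof is correct and follows essentially the same route as the paper's: in each configuration the exponent of $z^{\lambda}_{n}$ in its factor of $f^{\lambda}$ is $1$, so $\partial f^{\lambda}/\partial z^{\lambda}_{n}$ is a nonzero constant and the Jacobian has rank one everywhere. Your explicit observation that the sign of $\langle u^{\lambda}_{n},q\rangle$ is already forced by which side of the partition $n$ lies on is a small clarification the paper leaves implicit, but the argument is the same.
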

\begin{proof}
Assume that $\mathcal{I}^{+}_{\lambda} = \{1,\ldots,n-1\}, \; \mathcal{I}^{-}_{\lambda} \setminus \mathcal{I}^{0}_{\lambda} = \{n\}$.
From the defining equation for $\overline{C_{\lambda}(V)}$, we obtain that 
\begin{equation*}
    \frac{\partial f^{\lambda}}{\partial z^{\lambda}_{i}} = 
    \begin{cases*}
        \langle u^{\lambda}_{i},q\rangle  \prod_{j \in \mathcal{I}^{+}_{\lambda}\setminus \mathcal{I}^{0}_{\lambda}} (z^{\lambda}_{j})^{\langle u^{\lambda}_{j},q \rangle- \delta_{ij}} & if $i =1,\ldots,n-1$, \\
        \langle u^{\lambda}_{n},q \rangle (z^{\lambda}_{n})^{-\langle u^{\lambda}_{n},q \rangle - 1} & if $i=n$.
    \end{cases*}
\end{equation*}
If $\langle u^{\lambda}_{n},q \rangle = -1$, then 
\begin{equation*}
    \frac{\partial f^{\lambda}}{\partial z^{\lambda}_{n}} = -1.
\end{equation*}
Therefore, the rank of the Jacobian matrix $Df^{\lambda}$ is equal to one, i.e. $\overline{C_{\lambda}(V)}$ is nonsingular.
\end{proof}

\section{Delzant Polytopes of Torus-equivariantly Embedded Toric Hypersurfaces}
\label{sec: delzant of equiv toric}

In this section, we show the first main result (Theorem \ref{thm: almost delzant to torus equivariant submanifold}).

Since $i_{V}^{\ast}:(\mathfrak{t}^{n})^{\ast} \to (\mathfrak{t}^{n-1})^{\ast}$ is surjective, we have the following:
\begin{lemma}
\label{lemma: basis to basis via linear map}
Let $v_{1},\ldots, v_{n} \in \mathbb{R}^{n} \cong (\mathfrak{t}^{n})^{\ast}$ be linearly independent.
There exist $j_{1},\ldots,j_{n-1} \in \{1,\ldots, n\}$ such that $i_{V}^{\ast}(v_{j_{1}}),\ldots, i_{V}^{\ast}(v_{j_{n-1}}) \in (\mathfrak{t}^{n-1})^{\ast} \cong \mathbb{R}^{n-1}$ are linearly independent.
\end{lemma}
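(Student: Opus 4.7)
The plan is to exploit the surjectivity of $i_V^* : (\mathfrak{t}^n)^* \to (\mathfrak{t}^{n-1})^*$, which is exactly the observation stated in the sentence preceding the lemma. Since $v_1, \ldots, v_n$ are $n$ linearly independent vectors in the $n$-dimensional space $(\mathfrak{t}^n)^*$, they form a basis, so their images $i_V^*(v_1), \ldots, i_V^*(v_n)$ span the image of $i_V^*$, which by surjectivity equals the whole $(n-1)$-dimensional space $(\mathfrak{t}^{n-1})^*$.

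From here, the conclusion reduces to a standard linear-algebra fact: every spanning set of a finite-dimensional vector space contains a basis. Applying this to the spanning set $\{i_V^*(v_1), \ldots, i_V^*(v_n)\}$ of $(\mathfrak{t}^{n-1})^*$, I would extract $n-1$ of these vectors forming a basis of $(\mathfrak{t}^{n-1})^*$, and let $j_1, \ldots, j_{n-1}$ be the corresponding indices. These indices satisfy the conclusion of the lemma.

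Alternatively, and perhaps more explicitly, I would argue via the kernel. Because $i_V^*$ is surjective from an $n$-dimensional space onto an $(n-1)$-dimensional one, $\ker i_V^*$ is one-dimensional. At most one of the $v_j$ can lie in $\ker i_V^*$: if two of them did, they would be scalar multiples of a common nonzero generator of the kernel, contradicting the linear independence of $v_1, \ldots, v_n$. Removing that single $v_{j_0}$ lying in $\ker i_V^*$ (or, if none lies there, simply removing any one $v_{j_0}$) leaves $n-1$ vectors whose images under $i_V^*$ still span $(\mathfrak{t}^{n-1})^*$, and therefore form a basis of it.

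There is no real obstacle here — the lemma is a direct consequence of the rank–nullity theorem applied to $i_V^*$ together with the exchange lemma from elementary linear algebra.
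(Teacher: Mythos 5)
Your first argument is correct and is essentially the justification the paper intends: the paper offers no proof beyond the remark that $i_{V}^{\ast}$ is surjective, and the intended reasoning is exactly that the images $i_{V}^{\ast}(v_{1}),\ldots,i_{V}^{\ast}(v_{n})$ of a basis of $(\mathfrak{t}^{n})^{\ast}$ span the $(n-1)$-dimensional target, so that $n-1$ of them can be extracted to form a basis of $(\mathfrak{t}^{n-1})^{\ast}$.

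Your ``alternative'' kernel argument, however, contains a genuine error in its last case. It is true that at most one $v_{j}$ can lie in the one-dimensional kernel, and that if exactly one does, discarding it leaves $n-1$ vectors whose images still span (the discarded image is zero). But when \emph{no} $v_{j}$ lies in the kernel, it is false that ``simply removing any one'' works. Take $n=3$, let $i_{V}^{\ast}$ be the projection $(x,y,z)\mapsto(x,y)$ (i.e.\ $p_{1}=e_{1}$, $p_{2}=e_{2}$), and let $v_{1}=(1,0,0)$, $v_{2}=(2,0,1)$, $v_{3}=(0,1,0)$. These are linearly independent and none lies in $\ker i_{V}^{\ast}=\mathbb{R}(0,0,1)$, yet removing $v_{3}$ leaves the images $(1,0)$ and $(2,0)$, which do not span $\mathbb{R}^{2}$. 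In this case one must still make a correct \emph{choice} of which vector to discard, and justifying that such a choice exists is precisely the content of the exchange lemma you already invoke in the first argument. Since that first argument stands on its own, the lemma is proved; just drop or repair the alternative.
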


Hereafter, we assume that $j_{1} = 1,\ldots, j_{n-1} = n-1$ otherwise specified.
Let $\lambda$ be a vertex of the Delzant polytope $\Delta$. Define $\mathcal{J}_{\lambda} = \{i \mid \langle p_{1},v^{\lambda}_{i} \rangle = \cdots = \langle p_{n-1},v^{\lambda}_{i} \rangle = 0 \}$.

\subsection{Case of $\mathcal{J}_{\lambda} \neq \emptyset$}
\label{subsec: case of orthogonal}

In this section, we show that if the vertex $\lambda$ of $\Delta$ does not satisfy the condition (2) in Definition \ref{def: vertex and no direction vectors are perp}, then $\overline{C_{\lambda}(V)}$ is nonsingular.
\begin{lemma}
\label{lemma: not condition 2 iff condition}
The vertex $\lambda$ of $\Delta$ does not satisfy the condition (2) in Definition \ref{def: vertex and no direction vectors are perp} if and only if $\mathcal{J}_{\lambda} \neq \emptyset$.
\end{lemma}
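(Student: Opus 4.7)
The plan is to unwind both sides of the stated equivalence and observe that they reduce to the same condition, namely that $v^{\lambda}_i$ lies in the kernel of $i_V^{\ast}$. The negation of condition (2) in Definition \ref{def: vertex and no direction vectors are perp} is: there exists some $i \in \{1,\ldots,n\}$ with $i_V^{\ast}(v^{\lambda}_i) = 0$. So the lemma amounts to the pointwise statement $i_V^{\ast}(v^{\lambda}_i) = 0 \iff i \in \mathcal{J}_{\lambda}$, together with an existential quantifier.

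First I would compute the linear map $i_V^{\ast}: (\mathfrak{t}^n)^{\ast} \to (\mathfrak{t}^{n-1})^{\ast}$ explicitly. Differentiating the defining formula
\begin{equation*}
i_V(t_1,\ldots,t_{n-1}) = \left(\prod_{l=1}^{n-1}t_l^{\langle p_l, e_1\rangle},\ldots,\prod_{l=1}^{n-1}t_l^{\langle p_l, e_n\rangle}\right)
\end{equation*}
at the identity shows that $(i_V)_{\ast}:\mathfrak{t}^{n-1}\to\mathfrak{t}^n$ is represented, in the standard bases, by the matrix $[p_1\ \cdots\ p_{n-1}]$ whose columns are $p_1,\ldots,p_{n-1}$. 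Consequently, the dual map $i_V^{\ast}$ is represented by its transpose, so that for any $v\in(\mathfrak{t}^n)^{\ast}\cong\mathbb{R}^n$ one has
\begin{equation*}
i_V^{\ast}(v) = \bigl(\langle p_1, v\rangle,\ldots,\langle p_{n-1}, v\rangle\bigr) \in (\mathfrak{t}^{n-1})^{\ast}.
\end{equation*}

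Applying this formula with $v = v^{\lambda}_i$ yields $i_V^{\ast}(v^{\lambda}_i) = 0$ if and only if $\langle p_l, v^{\lambda}_i\rangle = 0$ for all $l = 1,\ldots,n-1$, which is exactly the defining condition for $i\in\mathcal{J}_{\lambda}$. Taking the disjunction over $i$ then gives: condition (2) fails, i.e.\ some $i_V^{\ast}(v^{\lambda}_i)$ vanishes, if and only if $\mathcal{J}_{\lambda}\neq\emptyset$.

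There is no real obstacle here; the argument is a direct identification of $\ker i_V^{\ast}$ with the orthogonal complement of $\overline{V}\cap\mathbb{Z}^n = \mathbb{Z}p_1+\cdots+\mathbb{Z}p_{n-1}$. The only mild care is to make sure the matrix of the differential of $i_V$ is read off in the convention matching the definitions, but this is straightforward once the formula for $i_V$ is differentiated at the identity of $T^{n-1}$.
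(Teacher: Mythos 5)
Your proof is correct and follows essentially the same route as the paper: both reduce the claim to the explicit formula $i_{V}^{\ast}(\xi) = (\langle p_{1},\xi \rangle, \ldots, \langle p_{n-1},\xi \rangle)$ and then note that the vanishing of $i_{V}^{\ast}(v^{\lambda}_{i})$ is precisely the defining condition for $i \in \mathcal{J}_{\lambda}$. The only difference is that you derive this formula by differentiating $i_{V}$ at the identity, whereas the paper simply states it.
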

\begin{proof}
If the vertex $\lambda$ of $\Delta$ does not satisfy the condition (2) in Definition \ref{def: vertex and no direction vectors are perp}, then there exists a direction vector $v^{\lambda}_{j}$ such that the vector $i_{V}^{\ast}(v^{\lambda}_{j}) \in (\mathfrak{t}^{n-1})^{\ast}$ is zero.
Note that the pullback $i_{V}^{\ast}:(\mathfrak{t}^{n})^{\ast} \to (\mathfrak{t}^{n-1})^{\ast}$ is given by 
\begin{equation*}
    i_{V}^{\ast}(\xi)
    =
    \left(
    \langle p_{1},\xi \rangle, \ldots, \langle p_{n-1},\xi \rangle
    \right).
\end{equation*}
Since $i_{V}^{\ast}(v^{\lambda}_{j}) = (0,\ldots, 0)$, we obtain $\langle p_{1},v^{\lambda}_{j} \rangle = 0,\ldots, \langle p_{n-1},v^{\lambda}_{j} \rangle = 0$, i.e. $j \in \mathcal{J}_{\lambda}$. In particular, $\mathcal{J}_{\lambda} \neq \emptyset$.

If $\mathcal{J}_{\lambda} \neq \emptyset$, then there exists an element $j_{0} \in \mathcal{J}_{\lambda}$.
By the definition of the set $\mathcal{J}_{\lambda}$, we have $\langle p_{1},v^{\lambda}_{j_{0}} \rangle = 0,\ldots, \langle p_{n-1},v^{\lambda}_{j_{0}} \rangle = 0$, i.e. the vector $i_{V}^{\ast}(v^{\lambda}_{j_{0}})$ is zero.
\end{proof}
In particular, $\mathcal{J}_{\lambda} \neq \emptyset$ is equivalent to $\vert \mathcal{J}_{\lambda}\vert = 1$.

\begin{lemma}
\label{lemma: vertical edge leads to nonsingular}
If $\mathcal{J}_{\lambda} \neq \emptyset$, then $\mathcal{I}^{+}_{\lambda}\setminus \mathcal{I}^{0}_{\lambda} = \emptyset$ or $\mathcal{I}^{-}_{\lambda}\setminus \mathcal{I}^{0}_{\lambda} = \emptyset$.
\end{lemma}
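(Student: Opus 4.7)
The plan is to exploit the duality between the edge direction vectors $v^{\lambda}_i$ and the inward facet normals $u^{\lambda}_i$ at the vertex $\lambda$, together with the fact that $q$ spans the one-dimensional orthogonal complement $\overline{V}^{\perp}$, in order to pin down the sign pattern of the integers $\langle u^{\lambda}_i, q\rangle$ and force one of $\mathcal{I}^{\pm}_{\lambda}\setminus\mathcal{I}^{0}_{\lambda}$ to be empty.

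First I would fix $j_{0}\in\mathcal{J}_{\lambda}$. By definition of $\mathcal{J}_{\lambda}$, $v^{\lambda}_{j_{0}}$ is orthogonal to each $p_{l}$, hence to the entire linear part $\overline{V}$. Since $\overline{V}^{\perp}\subset\mathfrak{t}^{n}$ is one-dimensional and spanned by $q$, the vector $v^{\lambda}_{j_{0}}$ must be a real multiple of $q$.

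Next I would upgrade this to the equality $v^{\lambda}_{j_{0}}=\pm q$ by a primitivity argument: $v^{\lambda}_{j_{0}}$ is primitive because it belongs to the $\mathbb{Z}$-basis $\{v^{\lambda}_{1},\ldots,v^{\lambda}_{n}\}$ of $\mathbb{Z}^{n}$ provided by Delzant's condition, and $q\in\mathbb{Z}^{n}$ is primitive by assumption, so two primitive integer vectors on the same rational line must coincide up to sign. (As a sanity check, this is consistent with the paper's remark that $\mathcal{J}_{\lambda}\neq\emptyset$ forces $|\mathcal{J}_{\lambda}|=1$, since any two vectors of $\mathcal{J}_{\lambda}$ would both be $\pm q$ and hence linearly dependent.)

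Finally I would apply the duality $\langle u^{\lambda}_{i},v^{\lambda}_{j}\rangle=\delta_{ij}$ coming from the identity ${}^{t}[u^{\lambda}_{1}\cdots u^{\lambda}_{n}][v^{\lambda}_{1}\cdots v^{\lambda}_{n}]=E_{n}$ recalled in Section~\ref{subsec: toric manifolds}. This gives $\langle u^{\lambda}_{i},q\rangle=\pm\delta_{ij_{0}}$. If the sign is $+$, then $\langle u^{\lambda}_{i},q\rangle\geq 0$ for every $i$, and it vanishes for every $i\neq j_{0}$, so $\mathcal{I}^{-}_{\lambda}\setminus\mathcal{I}^{0}_{\lambda}=\emptyset$; if the sign is $-$, symmetrically $\mathcal{I}^{+}_{\lambda}\setminus\mathcal{I}^{0}_{\lambda}=\emptyset$. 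I do not expect any real obstacle; the only substantive step is the primitivity argument, which is what rules out rational multiples other than $\pm 1$ and thereby forces $\langle u^{\lambda}_{j_{0}},q\rangle$ to be exactly $\pm 1$ rather than any larger integer.
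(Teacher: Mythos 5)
Your proof is correct and follows essentially the same route as the paper: identify $v^{\lambda}_{j_{0}}$ as a nonzero multiple of $q$ and use the duality $\langle u^{\lambda}_{i},v^{\lambda}_{j}\rangle=\delta_{ij}$ to read off the sign pattern of the $\langle u^{\lambda}_{i},q\rangle$. The only difference is that your primitivity step pinning down $v^{\lambda}_{j_{0}}=\pm q$ is not needed for this statement (the paper writes $q=bv^{\lambda}_{j_{0}}$ for some nonzero real $b$ and only uses the sign of $b$), though it is correct and in fact recovers the stronger conclusion $\langle u^{\lambda}_{j_{0}},q\rangle=\pm 1$ that the paper obtains elsewhere.
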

\begin{proof}
Let $j_{0} \in \mathcal{J}_{\lambda}$.
By the definition of $\mathcal{J}_{\lambda}$, we obtain $v^{\lambda}_{j_{0}} \in V^{\perp} = \mathbb{R}q$. 
Since the vectors $v^{\lambda}_{j_{0}},q$ are nonzero, there exists $b \in \mathbb{R}\setminus \{0\}$ such that $q = bv^{\lambda}_{j_{0}}$.
Since $\langle u^{\lambda}_{i},v^{\lambda}_{j} \rangle = \delta_{ij}$ (see \cite[Lemma 3.10]{MR4938007}), we have 
\begin{equation*}
    \langle u^{\lambda}_{i},q \rangle 
    =
    \langle u^{\lambda}_{i},bv^{\lambda}_{j_{0}} \rangle 
    =
    b \langle u^{\lambda}_{i},v^{\lambda}_{j_{0}} \rangle 
    =
    b \delta_{ij_{0}}
\end{equation*}
for any $i = 1,\ldots, n$.
If $b > 0$, then $i \in \mathcal{I}^{+}_{\lambda}$ for any $i$, i.e. $\mathcal{I}^{-}_{\lambda}\setminus \mathcal{I}^{0}_{\lambda} = \emptyset$.
If $b < 0$, then $i \in \mathcal{I}^{-}_{\lambda}$ for any $i$, i.e. $\mathcal{I}^{+}_{\lambda}\setminus \mathcal{I}^{0}_{\lambda} = \emptyset$.
\end{proof}

From Lemma \ref{lemma: vertical edge leads to nonsingular} and Lemma \ref{lemma: rank of zero locus only sign}, we obtain the following:

\begin{cor}
\label{cor: vertical edge leads to nonsingular}
If $\mathcal{J}_{\lambda} \neq \emptyset$, then $\overline{C_{\lambda}(V)}$ is nonsingular.
\end{cor}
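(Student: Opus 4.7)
The plan is to derive the corollary by directly chaining the two immediately preceding lemmas, so no new technical content is required. Specifically, I will start from the hypothesis $\mathcal{J}_{\lambda}\neq \emptyset$ and apply Lemma \ref{lemma: vertical edge leads to nonsingular} to conclude that at least one of the two index sets $\mathcal{I}^{+}_{\lambda}\setminus \mathcal{I}^{0}_{\lambda}$ or $\mathcal{I}^{-}_{\lambda}\setminus \mathcal{I}^{0}_{\lambda}$ is empty. Then I will invoke Lemma \ref{lemma: rank of zero locus only sign}, which covers both possibilities, to conclude that $\overline{C_{\lambda}(V)}$ has no singular points.

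Because the geometric content is already packaged in the two lemmas, the proof is essentially one sentence. The only thing worth being a little careful about is the case split: Lemma \ref{lemma: vertical edge leads to nonsingular} does not specify which of the two index sets becomes empty (it depends on the sign of the scalar $b$ with $q = bv^{\lambda}_{j_{0}}$), so I should simply say that either case is handled by Lemma \ref{lemma: rank of zero locus only sign} and not try to distinguish them further.

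I do not anticipate any real obstacle here; there is nothing deeper than a straightforward \emph{modus ponens} between the two lemmas. The argument can be presented in a single short paragraph, and there is no need to reopen the computation of $\langle u^{\lambda}_{i},q\rangle$ or the Jacobian, both of which were carried out in the preceding proofs.
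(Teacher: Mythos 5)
Your proposal is correct and matches the paper exactly: the paper derives this corollary by precisely the same chaining of Lemma \ref{lemma: vertical edge leads to nonsingular} (which gives $\mathcal{I}^{+}_{\lambda}\setminus\mathcal{I}^{0}_{\lambda}=\emptyset$ or $\mathcal{I}^{-}_{\lambda}\setminus\mathcal{I}^{0}_{\lambda}=\emptyset$) with Lemma \ref{lemma: rank of zero locus only sign}. Your remark that the case split need not be resolved further is also consistent with how the paper treats it.
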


For a vertex $\lambda$ of $\Delta$, we define the cone $\mathcal{C}_{\lambda}:= \{\sum_{i=1}^{n} a_{i}v^{\lambda}_{i} \mid a_{i} \geq 0 \}$.

\begin{prop}
\label{prop: vertical edge leads to vertex}
If $\mathcal{J}_{\lambda} \neq \emptyset$, then $i_{V}^{\ast}(\lambda)$ is a vertex of the convex polytope $i_{V}^{\ast} (\Delta)$.
\end{prop}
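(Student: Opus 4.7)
The plan is to exhibit a supporting hyperplane of $i_{V}^{\ast}(\Delta)$ that meets the polytope in exactly the single point $i_{V}^{\ast}(\lambda)$. First I would invoke the standard fact from convex geometry that for any vertex $\lambda$ of a convex polytope $\Delta$, one has the tangent-cone containment $\Delta \subset \lambda + \mathcal{C}_{\lambda}$. Applying the linear map $i_{V}^{\ast}$ immediately gives
\begin{equation*}
    i_{V}^{\ast}(\Delta) \subset i_{V}^{\ast}(\lambda) + i_{V}^{\ast}(\mathcal{C}_{\lambda}).
\end{equation*}

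The key step is to analyze the cone $i_{V}^{\ast}(\mathcal{C}_{\lambda})$. From Lemma \ref{lemma: not condition 2 iff condition} together with the remark that $\mathcal{J}_{\lambda} \neq \emptyset$ is equivalent to $\vert \mathcal{J}_{\lambda}\vert = 1$, there is a unique index $j_{0}$ with $v^{\lambda}_{j_{0}} \in \mathbb{R}q$, so $i_{V}^{\ast}(v^{\lambda}_{j_{0}}) = 0$, while $i_{V}^{\ast}(v^{\lambda}_{i}) \neq 0$ for $i \neq j_{0}$. Since $\ker(i_{V}^{\ast}) = \overline{V}^{\perp} = \mathbb{R}q$ and the Delzant property gives that $v^{\lambda}_{1},\ldots,v^{\lambda}_{n}$ form an $\mathbb{R}$-basis of $(\mathfrak{t}^{n})^{\ast}$ with $v^{\lambda}_{j_{0}}$ spanning the kernel of $i_{V}^{\ast}$, the remaining vectors $\{i_{V}^{\ast}(v^{\lambda}_{i})\}_{i \neq j_{0}}$ form a basis of $(\mathfrak{t}^{n-1})^{\ast}$. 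Consequently $i_{V}^{\ast}(\mathcal{C}_{\lambda})$ is a simplicial, hence pointed, cone in $(\mathfrak{t}^{n-1})^{\ast}$.

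To conclude, since $i_{V}^{\ast}(\mathcal{C}_{\lambda})$ is pointed, I can pick a linear functional $\ell$ on $(\mathfrak{t}^{n-1})^{\ast}$ lying in the interior of its dual cone, so that $\ell(w) \geq 0$ for every $w \in i_{V}^{\ast}(\mathcal{C}_{\lambda})$ with equality only when $w = 0$. Combined with the containment above, the affine functional $x \mapsto \ell(x - i_{V}^{\ast}(\lambda))$ is nonnegative on all of $i_{V}^{\ast}(\Delta)$ and vanishes only at $x = i_{V}^{\ast}(\lambda)$, exhibiting $i_{V}^{\ast}(\lambda)$ as the unique minimizer of a linear functional and hence as a vertex.

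No step is particularly deep; the mildly delicate point is the linear-independence claim for $\{i_{V}^{\ast}(v^{\lambda}_{i})\}_{i \neq j_{0}}$, which is where both the Delzant basis hypothesis and the explicit description of $\ker(i_{V}^{\ast})$ as $\mathbb{R}q$ enter. Everything else is a routine application of the tangent-cone formalism together with pointedness of a simplicial cone.
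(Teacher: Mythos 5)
Your proof is correct and follows essentially the same route as the paper: both arguments reduce the claim to the pointedness of the image cone $i_{V}^{\ast}(\mathcal{C}_{\lambda})$, using that the $n-1$ edge directions outside $\mathcal{J}_{\lambda}$ have linearly independent images (you derive this from $\ker i_{V}^{\ast} = \mathbb{R}q$, the paper from its Lemma \ref{lemma: basis to basis via linear map}). The only difference is that you make explicit the convex-geometry step (tangent-cone containment plus a functional in the interior of the dual cone) that the paper simply asserts as the criterion ``no nontrivial subspace in the cone implies vertex.''
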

\begin{proof}
From Lemma \ref{lemma: basis to basis via linear map}, 
we may assume that the vectors $i_{V}^{\ast}(v^{\lambda}_{1}),\ldots, i_{V}^{\ast}(v^{\lambda}_{n-1})$ are linearly independent. Then, $n \in \mathcal{J}_{\lambda}$, i.e. $i_{V}^{\ast}(v^{\lambda}_{n}) = 0 \in (\mathfrak{t}^{n-1})^{\ast}$.

If there does not exist any nontrivial subspace in the cone $i_{V}^{\ast}(\mathcal{C}_{\lambda})$, then $i_{V}^{\ast}(\lambda)$ is a vertex in the polytope $i_{V}^{\ast}(\Delta)$.
Let $W^{\lambda}_{V}$ be a subspace contained in the cone $i_{V}^{\ast}(\mathcal{C}_{\lambda})$.
For any element $x \in W^{\lambda}_{V} \subset i_{V}^{\ast}(\mathcal{C}_{\lambda})$, there exist $r_{1},\ldots, r_{n} \geq 0$ such that 
\begin{equation*}
    x = \sum_{j=1}^{n}r_{j}i_{V}^{\ast}(v^{\lambda}_{j}) = \sum_{j=1}^{n-1}r_{j}i_{V}^{\ast}(v^{\lambda}_{j}). 
\end{equation*}
Since $W^{\lambda}_{V}$ is a linear space, $(-x) \in W^{\lambda}_{V}$. There exist $s_{1},\ldots, s_{n} \geq 0$ such that 
\begin{equation*}
    -x = \sum_{j=1}^{n}s_{j}i_{V}^{\ast}(v^{\lambda}_{j}) = \sum_{j=1}^{n-1}s_{j}i_{V}^{\ast}(v^{\lambda}_{j}) = \sum_{j=1}^{n-1}(-r_{j})i_{V}^{\ast}(v^{\lambda}_{j}). 
\end{equation*}

Since we assume that $i_{V}^{\ast}(v^{\lambda}_{1}),\ldots,i_{V}^{\ast}(v^{\lambda}_{n-1})$ are linearly independent, we obtain $s_{j} = -r_{j}$ for $j = 1,\ldots, n-1$.
Since $r_{1},\ldots, r_{n-1},s_{1},\ldots,s_{n-1} \geq 0$, we obtain $r_{1}= \cdots = r_{n-1} = s_{1}= \cdots = s_{n-1}= 0$, which means that $x = 0$.
Therefore, we obtain $W^{\lambda}_{V} = \{0\}$, i.e. there is no nontrivial subspace in the cone $i_{V}^{\ast}(\mathcal{C}_{\lambda})$.
The point $i_{V}^{\ast}(\lambda)$ is a vertex of the convex polytope $i_{V}^{\ast}(\Delta)$.
\end{proof}

This proposition tells us that if the vertex $\lambda$ of $\Delta$ does not satisfy the condition (2) in Definition \ref{def: vertex and no direction vectors are perp}, then the vertex $\lambda$ satisfies the condition (1). 

\begin{prop}
\label{prop: vertical edge leads to simple}
If $\mathcal{J}_{\lambda} \neq \emptyset$, then the vectors $i_{V}^{\ast}(v^{\lambda}_{j_{1}}),\ldots, i_{V}^{\ast}(v^{\lambda}_{j_{n-1}})$ for $j_{1},\ldots,j_{n-1} \notin \mathcal{J}_{\lambda}$ are the direction vectors from the vertex $i_{V}^{\ast}(\lambda)$ of the polytope $i_{V}^{\ast}(\Delta)$. 
\end{prop}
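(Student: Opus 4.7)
The plan is to identify the local cone of the image polytope $i_{V}^{\ast}(\Delta)$ at the vertex $i_{V}^{\ast}(\lambda)$ with $i_{V}^{\ast}(\mathcal{C}_{\lambda})$, and then to read off its extreme rays. By the observation following Lemma~\ref{lemma: not condition 2 iff condition}, the assumption $\mathcal{J}_{\lambda} \neq \emptyset$ forces $\vert \mathcal{J}_{\lambda} \vert = 1$, so exactly one direction vector, say $v^{\lambda}_{j_{0}}$, maps to zero under $i_{V}^{\ast}$. Lemma~\ref{lemma: basis to basis via linear map} then guarantees that the remaining $n-1$ images $i_{V}^{\ast}(v^{\lambda}_{j_{1}}), \ldots, i_{V}^{\ast}(v^{\lambda}_{j_{n-1}})$ are linearly independent in $(\mathfrak{t}^{n-1})^{\ast}$, since the only way a subset of $n-1$ of the $i_{V}^{\ast}(v^{\lambda}_{i})$'s can be linearly independent is by omitting the one that vanishes.

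Next, I would use that $i_{V}^{\ast}$ is $\mathbb{R}$-linear and commutes with taking conic hulls: every point of $i_{V}^{\ast}(\Delta)$ is of the form $i_{V}^{\ast}(x)$ for some $x \in \Delta$, and near $\lambda$ one has $x - \lambda \in \mathcal{C}_{\lambda}$, so the local cone
\[
\{t(y - i_{V}^{\ast}(\lambda)) : y \in i_{V}^{\ast}(\Delta),\ t \geq 0\}
\]
coincides with $i_{V}^{\ast}(\mathcal{C}_{\lambda})$. Because $i_{V}^{\ast}(v^{\lambda}_{j_{0}}) = 0$, this cone is generated by the $n-1$ linearly independent vectors $i_{V}^{\ast}(v^{\lambda}_{j_{1}}), \ldots, i_{V}^{\ast}(v^{\lambda}_{j_{n-1}})$ in the $(n-1)$-dimensional space $(\mathfrak{t}^{n-1})^{\ast}$, and is therefore a full-dimensional simplicial cone whose extreme rays are exactly the rays through these generators. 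Since the edges emanating from a vertex of a convex polytope correspond to the extreme rays of its local cone, this exhibits the $i_{V}^{\ast}(v^{\lambda}_{j_{k}})$ as the direction vectors of the edges of $i_{V}^{\ast}(\Delta)$ at $i_{V}^{\ast}(\lambda)$.

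The main obstacle is precisely the identification of the local cone of the image polytope with $i_{V}^{\ast}(\mathcal{C}_{\lambda})$: this is a routine but slightly technical fact about linear images of polytopes, and is already implicitly used in the proof of Proposition~\ref{prop: vertical edge leads to vertex}. In writing out this proposition I would make that identification explicit once, and then invoke it directly to conclude.
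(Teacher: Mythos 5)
Your proposal is correct and follows essentially the same route as the paper: both arguments use Lemma~\ref{lemma: basis to basis via linear map} to get linear independence of the $n-1$ nonvanishing images and then identify $i_{V}^{\ast}(\mathcal{C}_{\lambda})$ with the simplicial cone $\{\sum_{i\neq j}b_{i}i_{V}^{\ast}(v^{\lambda}_{i})\mid b_{i}\geq 0\}$, reading off the edge directions as its generators. You are somewhat more explicit than the paper about the two facts it leaves implicit --- that the local cone of the image polytope at $i_{V}^{\ast}(\lambda)$ is $i_{V}^{\ast}(\mathcal{C}_{\lambda})$, and that edges correspond to extreme rays of that cone --- which is a reasonable addition but not a different proof.
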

\begin{proof}
Let $j \in \mathcal{J}_{\lambda}$, i.e. $i_{V}^{\ast}(v^{\lambda}_{j}) = 0$.
From Lemma \ref{lemma: basis to basis via linear map}, the $(n-1)$ vectors $i_{V}^{\ast}(v^{\lambda}_{1}),\ldots, i_{V}^{\ast}(v^{\lambda}_{j-1}),i_{V}^{\ast}(v^{\lambda}_{j+1}),\ldots, i_{V}^{\ast}(v^{\lambda}_{n})$ are linearly independent.

For the rest of the proof, we show that 
\begin{equation*}
    i_{V}^{\ast}\left(\left\{ \sum_{i=1}^{n}a_{i}v^{\lambda}_{i} \mid a_{i} \geq 0 \right\}\right)
    =
    \left\{ \sum_{i \neq j} b_{i} i_{V}^{\ast}(v^{\lambda}_{i}) \mid b_{i} \geq 0 \right\}.
\end{equation*}
Since it is clear that the right hand side is a subset of the left hand side, we show the other inclusion.
For any $a_{1},\ldots, a_{n} \geq 0$, we obtain
\begin{equation*}
    i_{V}^{\ast}\left(\sum_{i=1}^{n}a_{i}v^{\lambda}_{i} \right)
    =
    \sum_{i=1}^{n}a_{i}i_{V}^{\ast}(v^{\lambda}_{i})
    =
    \sum_{i\neq j}a_{i}i_{V}^{\ast}(v^{\lambda}_{i}).
\end{equation*}
Therefore, $i_{V}^{\ast}(v^{\lambda}_{1}),\ldots, i_{V}^{\ast}(v^{\lambda}_{j-1}),i_{V}^{\ast}(v^{\lambda}_{j+1}),\ldots, i_{V}^{\ast}(v^{\lambda}_{n})$ are the direction vectors from the vertex $i_{V}^{\ast}(\lambda)$.
\end{proof}

\subsection{Case of $i_{V}^{\ast}(\lambda)$ is not a vertex}
\label{subsec: case of not vertex}

Let $\lambda$ be a vertex of the Delzant polytope $\Delta$.
In this section, we show that if the vertex $\lambda$ of $\Delta$ does not satisfy the condition (1) in Definition \ref{def: vertex and no direction vectors are perp}, i.e. there is a nontrivial subspace $W^{\lambda}_{V}$ in the cone $i_{V}^{\ast}(\mathcal{C}_{\lambda})$, then $\overline{C_{\lambda}(V)}$ is nonsingular.
From Proposition \ref{prop: vertical edge leads to vertex}, this vertex $\lambda$ satisfies the condition (2), i.e. $\mathcal{J}_{\lambda} = \emptyset$.

Let $x$ be a nonzero element in the nontrivial subspace $W^{\lambda}_{V}$.
Since $x \in W^{\lambda}_{V}$, we obtain $(-x) \in W^{\lambda}_{V}$.
Since the subspace $W^{\lambda}_{V}$ is in the cone $i_{V}^{\ast}(\mathcal{C}_{\lambda})$, there exist $r_{1},\ldots, r_{n}, s_{1},\ldots, s_{n} \geq 0$ such that 
\begin{equation*}
    x = \sum_{j=1}^{n} r_{j} i_{V}^{\ast}(v^{\lambda}_{j}), \;
    (-x) = \sum_{j=1}^{n} s_{j} i_{V}^{\ast}(v^{\lambda}_{j}).
\end{equation*}
Let $R_{j} = r_{j} + s_{j}$ for $j =1,\ldots, n$. Then, we obtain 
\begin{equation}
    \label{eq: zero sum condition}
    0 =  x + (-x) 
    = \sum_{j=1}^{n}(r_{j}+s_{j})i_{V}^{\ast}(v^{\lambda}_{j})
    = \sum_{j=1}^{n}R_{j}i_{V}^{\ast}(v^{\lambda}_{j}).
\end{equation}

From Lemma \ref{lemma: basis to basis via linear map},
we may assume that the vectors $i_{V}^{\ast}(v^{\lambda}_{1}),\ldots, i_{V}^{\ast}(v^{\lambda}_{n-1})$ are linearly independent.

\begin{lemma}
\label{lemma: not vertex 1}
Assume that the vectors $i_{V}^{\ast}(v^{\lambda}_{1}),\ldots, i_{V}^{\ast}(v^{\lambda}_{n-1})$ are linearly independent.
If $i_{V}^{\ast}(\lambda)$ is not a vertex of the polytope $i_{V}^{\ast}(\Delta)$, then 
$R_{n} \neq 0$ in Equation (\ref{eq: zero sum condition}).
\end{lemma}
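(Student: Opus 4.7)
The plan is to argue by contradiction: assume $R_n = 0$ and derive that the vector $x \in W^\lambda_V$ chosen just before the lemma must actually equal $0$, contradicting the fact that $W^\lambda_V$ is nontrivial (which is guaranteed precisely because $i_V^\ast(\lambda)$ is not a vertex of $i_V^\ast(\Delta)$).

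The first step is to exploit the nonnegativity of the expansion coefficients. Since $R_n = r_n + s_n$ with $r_n, s_n \geq 0$, the hypothesis $R_n = 0$ forces $r_n = s_n = 0$. Consequently $x = \sum_{j=1}^{n-1} r_j\, i_V^\ast(v^\lambda_j)$ and the zero-sum identity \eqref{eq: zero sum condition} collapses to
\begin{equation*}
0 \;=\; \sum_{j=1}^{n-1} R_j\, i_V^\ast(v^\lambda_j).
\end{equation*}

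The second step invokes the standing assumption of the lemma, namely that $i_V^\ast(v^\lambda_1),\ldots, i_V^\ast(v^\lambda_{n-1})$ are linearly independent in $(\mathfrak{t}^{n-1})^\ast$. This forces $R_1 = \cdots = R_{n-1} = 0$, and hence, by the same nonnegativity argument as before, $r_j = s_j = 0$ for every $j = 1,\ldots, n-1$. Combined with $r_n = 0$, this yields $x = \sum_{j=1}^{n} r_j\, i_V^\ast(v^\lambda_j) = 0$, contradicting the choice of $x$ as a nonzero element of $W^\lambda_V$.

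There is no real obstacle here; the argument is essentially the same mechanism that was used in the proof of Proposition \ref{prop: vertical edge leads to vertex} to rule out nontrivial subspaces of the image cone, only now applied in reverse: the linear independence of the first $n-1$ vectors, together with the nontriviality of $W^\lambda_V$, forces the defect of linear dependence to be carried entirely by the $n$-th vector $i_V^\ast(v^\lambda_n)$.
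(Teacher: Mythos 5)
Your proof is correct and follows essentially the same route as the paper's: assume $R_{n}=0$, reduce the zero-sum identity to the first $n-1$ terms, use linear independence to force all $R_{j}=0$, and then use nonnegativity of the $r_{j},s_{j}$ to conclude $x=0$, contradicting the nontriviality of $W^{\lambda}_{V}$. No discrepancies to report.
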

\begin{proof}
Assume on the contrary that $R_{n} = 0$.
Then, from Equation (\ref{eq: zero sum condition}) we obtain 
\begin{equation*}
    0= \sum_{j=1}^{n-1}R_{j}i_{V}^{\ast}(v^{\lambda}_{j}).
\end{equation*}
Since the vectors $i_{V}^{\ast}(v^{\lambda}_{1}),\ldots, i_{V}^{\ast}(v^{\lambda}_{n-1})$ are linearly independent, we obtain $R_{1} = \cdots = R_{n-1}=0$.
Since $R_{j} = r_{j} + s_{j}$ and $r_{j},s_{j}\geq 0$, we obtain $r_{j} = s_{j} = 0$ for any $j =1,\ldots, n$, i.e. $x = 0$. This is contradiction to the assumption that $x$ is a nonzero element in the subspace $W^{\lambda}_{V}$.
\end{proof}

\begin{lemma}
\label{lemma: not vertex 2}
Assume that the vectors $i_{V}^{\ast}(v^{\lambda}_{1}),\ldots, i_{V}^{\ast}(v^{\lambda}_{n-1})$ are linearly independent.
If $i_{V}^{\ast}(\lambda)$ is not a vertex of the polytope $i_{V}^{\ast}(\Delta)$, then $\langle u^{\lambda}_{n},q \rangle \neq 0$.
\end{lemma}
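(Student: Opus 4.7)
The plan is to argue by contradiction, assuming $\langle u^{\lambda}_{n}, q \rangle = 0$, and then use the linear relation from Lemma \ref{lemma: u,q,p,v} to force $q = 0$, contradicting the fact that $q$ is primitive (and hence nonzero).

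More concretely, from Lemma \ref{lemma: u,q,p,v} we have
\begin{equation*}
    \sum_{j=1}^{n} \langle u^{\lambda}_{j}, q \rangle \, i_{V}^{\ast}(v^{\lambda}_{j}) = 0.
\end{equation*}
Under the assumption $\langle u^{\lambda}_{n}, q \rangle = 0$ this reduces to
\begin{equation*}
    \sum_{j=1}^{n-1} \langle u^{\lambda}_{j}, q \rangle \, i_{V}^{\ast}(v^{\lambda}_{j}) = 0.
\end{equation*}
By hypothesis, $i_{V}^{\ast}(v^{\lambda}_{1}),\ldots, i_{V}^{\ast}(v^{\lambda}_{n-1})$ are linearly independent, so all the coefficients $\langle u^{\lambda}_{j}, q \rangle$ for $j = 1, \ldots, n-1$ must vanish. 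Combined with the assumption on $j=n$, we obtain $\langle u^{\lambda}_{j}, q \rangle = 0$ for every $j = 1, \ldots, n$.

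Now I would invoke the Delzant property of $\Delta$: by Definition \ref{def: delzant polytopes} together with the identity ${}^{t}[u^{\lambda}_{1}\cdots u^{\lambda}_{n}][v^{\lambda}_{1}\cdots v^{\lambda}_{n}] = E_{n}$ recalled in Section \ref{subsec: toric manifolds}, the inward normal vectors $u^{\lambda}_{1}, \ldots, u^{\lambda}_{n}$ form a $\mathbb{Z}$-basis, and in particular an $\mathbb{R}$-basis, of $\mathbb{R}^{n}$. Hence the vanishing of all inner products $\langle u^{\lambda}_{j}, q \rangle$ forces $q = 0$, contradicting the fact that $q$ is a primitive (in particular nonzero) vector in $\mathbb{Z}^{n}$. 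This completes the proof.

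I do not anticipate a genuine obstacle here; the only subtle point is that the hypothesis that $i_{V}^{\ast}(\lambda)$ is not a vertex of $i_{V}^{\ast}(\Delta)$ is actually never consumed in the core argument above, and is presumably retained purely to keep the statement parallel with Lemma \ref{lemma: not vertex 1} and to fit the narrative of Section \ref{subsec: case of not vertex}. If one wanted to use it, one could alternatively route the argument through Lemma \ref{lemma: not vertex 1}: that lemma provides $R_{n} \neq 0$, which lets one express $i_{V}^{\ast}(v^{\lambda}_{n})$ as a linear combination of $i_{V}^{\ast}(v^{\lambda}_{1}), \ldots, i_{V}^{\ast}(v^{\lambda}_{n-1})$, and then compare with the expression coming from Lemma \ref{lemma: u,q,p,v}; this route, however, only reconfirms the same conclusion via a longer detour.
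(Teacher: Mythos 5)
Your proof is correct, and it is shorter than the paper's: you go straight from the contradiction hypothesis $\langle u^{\lambda}_{n},q\rangle = 0$ and Lemma \ref{lemma: u,q,p,v} to the vanishing of all $\langle u^{\lambda}_{j},q\rangle$ via the linear independence of $i_{V}^{\ast}(v^{\lambda}_{1}),\ldots,i_{V}^{\ast}(v^{\lambda}_{n-1})$, and then conclude $q=0$ because the $u^{\lambda}_{j}$ span $\mathbb{R}^{n}$. The paper instead first invokes Lemma \ref{lemma: not vertex 1} to get $R_{n}\neq 0$, solves for $i_{V}^{\ast}(v^{\lambda}_{n})$ in terms of the other images, and derives the relation formulae $\langle u^{\lambda}_{j},q\rangle = \langle u^{\lambda}_{n},q\rangle R_{j}/R_{n}$ before running the same final contradiction. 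You are right that the hypothesis ``$i_{V}^{\ast}(\lambda)$ is not a vertex'' is never consumed in your argument, and indeed the lemma is true without it; but the paper's longer detour is not wasted, because the relation formulae (Equations \ref{eq: relation formulae} and \ref{eq: coefficients of relation formulae}) established inside that proof are exactly what the subsequent Lemma \ref{lemma: not vertex 3} uses to sort the indices into $\mathcal{I}^{+}_{\lambda}$ and $\mathcal{I}^{-}_{\lambda}$, and those formulae genuinely do require the ``not a vertex'' hypothesis (through $R_{n}\neq 0$ and $R_{j}/R_{n}\geq 0$). So your route buys a cleaner, more self-contained proof of this one lemma, while the paper's route front-loads the computation that the next step of the argument needs.
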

\begin{proof}
From Lemma \ref{lemma: not vertex 1}, $R_{n} \neq 0$. Then, Equation (\ref{eq: zero sum condition}) follows 
\begin{equation*}
    i_{V}^{\ast}(v^{\lambda}_{n})
    =
    - \frac{1}{R_{n}}\sum_{j=1}^{n-1}R_{j}i_{V}^{\ast}(v^{\lambda}_{j}).
\end{equation*}
Since from Lemma \ref{lemma: u,q,p,v} we calculate 
\begin{align*}
    \sum_{j=1}^{n-1}\langle u^{\lambda}_{j},q \rangle i_{V}^{\ast}(v^{\lambda}_{j})
    &=
    - \langle u^{\lambda}_{n},q \rangle i_{V}^{\ast}(v^{\lambda}_{n}) \\
    &=
    \langle u^{\lambda}_{n},q \rangle \frac{1}{R_{n}}\sum_{j=1}^{n-1}R_{j}i_{V}^{\ast}(v^{\lambda}_{j}) \\
    &=
    \sum_{j=1}^{n-1}\langle u^{\lambda}_{n},q \rangle \frac{R_{j}}{R_{n}}i_{V}^{\ast}(v^{\lambda}_{j}),
\end{align*}
we obtain 
\begin{equation*}
    \sum_{j=1}^{n-1}\left(\langle u^{\lambda}_{j},q \rangle - \langle u^{\lambda}_{n},q \rangle \frac{R_{j}}{R_{n}} \right)i_{V}^{\ast}(v^{\lambda}_{j}) = 0.
\end{equation*}
Since we assume that the vectors $i_{V}^{\ast}(v^{\lambda}_{1}),\ldots, i_{V}^{\ast}(v^{\lambda}_{n-1})$ are linearly independent, we obtain 
\begin{equation}
    \label{eq: relation formulae}
    \langle u^{\lambda}_{1},q \rangle = \langle u^{\lambda}_{n},q \rangle \frac{R_{1}}{R_{n}}, 
    \ldots,
    \langle u^{\lambda}_{n-1},q \rangle = \langle u^{\lambda}_{n},q \rangle \frac{R_{n-1}}{R_{n}}.
\end{equation}
Since $R_{n} \neq 0$ means that $R_{n} > 0$, we say that 
\begin{equation}
    \label{eq: coefficients of relation formulae}
    \frac{R_{1}}{R_{n}} \geq 0, \ldots, \frac{R_{n-1}}{R_{n}} \geq 0.
\end{equation}

Assume on the contrary that $\langle u^{\lambda}_{n},q \rangle = 0$.
Then, from Equation (\ref{eq: relation formulae}), we obtain $\langle u^{\lambda}_{1},q \rangle = \cdots = \langle u^{\lambda}_{n-1},q \rangle = 0$.
Since $u^{\lambda}_{1},\ldots, u^{\lambda}_{n}$ form a $\mathbb{Z}$-basis of $\mathbb{Z}^{n}$, if $\langle u^{\lambda}_{1},q \rangle = \cdots = \langle u^{\lambda}_{n},q \rangle = 0$, then $q = 0$. This is contradiction to the assumption that $q$ is a basis of the orthogonal subspace $V^{\perp}$ to the $(n-1)$-dimensional subspace $V$ in $\mathfrak{t}^{n} \cong \mathbb{R}^{n}$.
Therefore, $\langle u^{\lambda}_{n},q \rangle \neq 0$.
\end{proof}

\begin{lemma}
\label{lemma: not vertex 3}
If $i_{V}^{\ast}(\lambda)$ is not a vertex of the polytope $i_{V}^{\ast}(\Delta)$, then $\mathcal{I}^{+}_{\lambda} \setminus \mathcal{I}^{0}_{\lambda} = \emptyset$ or $\mathcal{I}^{-}_{\lambda} \setminus \mathcal{I}^{0}_{\lambda} = \emptyset$.
\end{lemma}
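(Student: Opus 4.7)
The plan is to read off the conclusion directly from the relations already established in Lemma \ref{lemma: not vertex 2}, by doing a two-case analysis on the sign of $\langle u^{\lambda}_{n},q\rangle$.

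First, I would set the stage by recalling from Lemma \ref{lemma: basis to basis via linear map} that we may assume $i_{V}^{\ast}(v^{\lambda}_{1}),\ldots,i_{V}^{\ast}(v^{\lambda}_{n-1})$ are linearly independent, so that the machinery of the previous two lemmas applies. Lemma \ref{lemma: not vertex 2} gives $\langle u^{\lambda}_{n},q\rangle \neq 0$, together with the identities
\begin{equation*}
    \langle u^{\lambda}_{j},q \rangle
    = \langle u^{\lambda}_{n},q \rangle\,\frac{R_{j}}{R_{n}},
    \qquad j=1,\ldots,n-1,
\end{equation*}
from Equation \ref{eq: relation formulae}, and the nonnegativity $R_{j}/R_{n} \geq 0$ from Equation \ref{eq: coefficients of relation formulae}.

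Next, I would split into two cases. If $\langle u^{\lambda}_{n},q\rangle > 0$, then multiplying by the nonnegative quantities $R_{j}/R_{n}$ yields $\langle u^{\lambda}_{j},q\rangle \geq 0$ for every $j = 1,\ldots,n-1$, and together with $\langle u^{\lambda}_{n},q\rangle > 0$ this gives $\langle u^{\lambda}_{i},q\rangle \geq 0$ for all $i \in \{1,\ldots,n\}$, i.e.\ $\mathcal{I}^{-}_{\lambda} = \mathcal{I}^{0}_{\lambda}$, which is exactly $\mathcal{I}^{-}_{\lambda}\setminus \mathcal{I}^{0}_{\lambda} = \emptyset$. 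If instead $\langle u^{\lambda}_{n},q\rangle < 0$, the same identity forces $\langle u^{\lambda}_{j},q\rangle \leq 0$ for every $j = 1,\ldots,n-1$, and combined with $\langle u^{\lambda}_{n},q\rangle < 0$ this yields $\mathcal{I}^{+}_{\lambda}\setminus \mathcal{I}^{0}_{\lambda} = \emptyset$.

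I do not foresee a real obstacle: the proof is essentially a one-line sign argument once Lemmas \ref{lemma: not vertex 1} and \ref{lemma: not vertex 2} are in hand. The only mild subtlety to be careful about is the convention of the proportionality constant $R_{j}/R_{n}$, and in particular that $R_{n}>0$ (guaranteed by Lemma \ref{lemma: not vertex 1} since $R_{n}=r_{n}+s_{n}\geq 0$ and $R_{n}\neq 0$), so that multiplication by $R_{j}/R_{n}$ preserves the sign of $\langle u^{\lambda}_{n},q\rangle$. This lemma, combined with Lemma \ref{lemma: rank of zero locus only sign}, will then imply that $\overline{C_{\lambda}(V)}$ is nonsingular in the case where $i_{V}^{\ast}(\lambda)$ fails to be a vertex, which is the content promised in the section heading.
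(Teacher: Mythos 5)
Your proposal is correct and follows essentially the same route as the paper: both rely on Lemma \ref{lemma: not vertex 2} to get $\langle u^{\lambda}_{n},q\rangle \neq 0$ and then use Equations \ref{eq: relation formulae} and \ref{eq: coefficients of relation formulae} to propagate the sign of $\langle u^{\lambda}_{n},q\rangle$ to all $\langle u^{\lambda}_{j},q\rangle$, which is precisely the paper's case split on whether $n$ lies in $\mathcal{I}^{+}_{\lambda}\setminus\mathcal{I}^{0}_{\lambda}$ or $\mathcal{I}^{-}_{\lambda}\setminus\mathcal{I}^{0}_{\lambda}$. Your remark that $R_{n}>0$ (not merely $R_{n}\neq 0$) is the same observation the paper makes just before Equation \ref{eq: coefficients of relation formulae}.
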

\begin{proof}
From Lemma \ref{lemma: basis to basis via linear map}, we may assume that the vectors $i_{V}^{\ast}(v^{\lambda}_{1}),\ldots, i_{V}^{\ast}(v^{\lambda}_{n-1})$ are linearly independent.

By Lemma \ref{lemma: not vertex 2}, we obtain $\langle u^{\lambda}_{n},q \rangle \neq 0$.
Moreover, since Equation (\ref{eq: relation formulae}) and Equation (\ref{eq: coefficients of relation formulae}), if $n \in \mathcal{I}^{+}_{\lambda} \setminus \mathcal{I}^{0}_{\lambda}$, then $1,\ldots,n-1 \in \mathcal{I}^{+}_{\lambda}$, i.e. $\mathcal{I}^{-}_{\lambda}\setminus \mathcal{I}^{0}_{\lambda} = \emptyset$.
Similarly, if $n \in \mathcal{I}^{-}_{\lambda} \setminus \mathcal{I}^{0}_{\lambda}$, then $1,\ldots,n-1 \in \mathcal{I}^{-}_{\lambda}$, i.e. $\mathcal{I}^{+}_{\lambda}\setminus \mathcal{I}^{0}_{\lambda} = \emptyset$.
\end{proof}

From Lemma \ref{lemma: not vertex 3} and Lemma \ref{lemma: rank of zero locus only sign}, we obtain the following:

\begin{cor}
\label{cor: not vertex leads to nonsingular}
If $i_{V}^{\ast}(\lambda)$ is not a vertex of the polytope $i_{V}^{\ast}(\Delta)$, then $\overline{C_{\lambda}(V)}$ is nonsingular.
\end{cor}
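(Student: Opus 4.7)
The plan is short: this corollary is a direct combination of two lemmas already established in this section, so I would not need any further technical work.

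First, I would invoke Lemma \ref{lemma: not vertex 3}. Under the hypothesis that $i_{V}^{\ast}(\lambda)$ is not a vertex of the convex polytope $i_{V}^{\ast}(\Delta)$, this lemma tells us that at least one of the two sign classes is trivial, i.e.\ $\mathcal{I}^{+}_{\lambda}\setminus \mathcal{I}^{0}_{\lambda} = \emptyset$ or $\mathcal{I}^{-}_{\lambda}\setminus \mathcal{I}^{0}_{\lambda} = \emptyset$. Intuitively, the existence of a nontrivial subspace $W^{\lambda}_{V}$ inside $i_{V}^{\ast}(\mathcal{C}_{\lambda})$ forces a nontrivial dependence $\sum_{j} R_{j} i_{V}^{\ast}(v^{\lambda}_{j}) = 0$ with all $R_{j}\ge 0$ and at least one strictly positive; combined with the identity in Lemma \ref{lemma: u,q,p,v}, this compels all the pairings $\langle u^{\lambda}_{j}, q\rangle$ that are nonzero to share a common sign.

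Second, in whichever of the two cases holds, I would apply Lemma \ref{lemma: rank of zero locus only sign}, which says precisely that the vanishing of one of the two sign classes forces the rank of the Jacobian matrix $Df^{\lambda}$ to be equal to one on all of $\overline{C_{\lambda}(V)}$. Consequently $\overline{C_{\lambda}(V)}$ is nonsingular, which is the conclusion of the corollary.

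Since the substantive work has been absorbed into Lemmas \ref{lemma: not vertex 3} and \ref{lemma: rank of zero locus only sign}, there is no real obstacle at this stage; the only thing to be careful about is to apply Lemma \ref{lemma: rank of zero locus only sign} on \emph{both} branches of the disjunction produced by Lemma \ref{lemma: not vertex 3}, which that lemma already handles symmetrically. Thus the proof is simply: assume the hypothesis, apply Lemma \ref{lemma: not vertex 3}, then apply Lemma \ref{lemma: rank of zero locus only sign}.
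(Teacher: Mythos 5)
Your proposal is correct and matches the paper exactly: the corollary is stated there as an immediate consequence of Lemma \ref{lemma: not vertex 3} combined with Lemma \ref{lemma: rank of zero locus only sign}, which is precisely the two-step argument you give. No gaps.
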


\subsection{Case of $\mathcal{J}_{\lambda} = \emptyset$ and $i_{V}^{\ast}(\lambda)$ is a vertex}
\label{subsec: case of vertex}

Let $\lambda$ be a vertex of the Delzant polytope $\Delta$.
In this section, we consider the case when the vertex $\lambda$ of $\Delta$ is a good vertex with respect to the map $i_{V}^{\ast}:(\mathfrak{t}^{n})^{\ast} \to (\mathfrak{t}^{n-1})^{\ast}$ in the sense of Definition \ref{def: vertex and no direction vectors are perp}.
In this case, we suppose the following:
\begin{assumption}
\label{assumption: when good vertex}
The vectors $i_{V}^{\ast}(v^{\lambda}_{1}),\ldots, i_{V}^{\ast}(v^{\lambda}_{n-1}) \in \mathbb{Z}^{n}$ are linearly independent.
Moreover, we have 
\begin{equation*}
    i_{V}^{\ast}\left( \left\{\sum_{i=1}^{n}a_{i}v^{\lambda}_{i}\mid a_{i} \geq 0 \right\}\right) = \left\{\sum_{j=1}^{n-1}b_{j}i_{V}^{\ast}(v^{\lambda}_{j}) \mid b_{j} \geq 0 \right\}.
\end{equation*}
\end{assumption}
This assumption is the same as the first condition in Definition \ref{def: almost delzant}.
Note that if the polytope $i_{V}^{\ast}(\Delta)$ is simple, then Assumption \ref{assumption: when good vertex} holds.

\begin{lemma}
\label{lemma: good vertex leads to splitting}
Under Assumption \ref{assumption: when good vertex},
if the vertex $\lambda$ of $\Delta$ is a good vertex with respect to the map $i_{V}^{\ast}:(\mathfrak{t}^{n})^{\ast} \to (\mathfrak{t}^{n-1})^{\ast}$, then we obtain
\begin{equation*}
    \mathcal{I}^{+}_{\lambda}= \{ 1,\ldots, n-1 \}, \;
    \mathcal{I}^{-}_{\lambda}\setminus \mathcal{I}^{0}_{\lambda} = \{n\}
\end{equation*}
or 
\begin{equation*}
    \mathcal{I}^{+}_{\lambda}\setminus \mathcal{I}^{0}_{\lambda} = \{n\}, \;
    \mathcal{I}^{-}_{\lambda}= \{ 1,\ldots, n-1 \}.
\end{equation*}
\end{lemma}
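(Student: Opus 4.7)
The plan is to exploit the explicit description of the cone $i_V^\ast(\mathcal{C}_\lambda)$ provided by Assumption \ref{assumption: when good vertex} together with the linear relation from Lemma \ref{lemma: u,q,p,v} in order to read off the signs of $\langle u_j^\lambda, q \rangle$ for every $j$. The underlying idea is that goodness forces $i_V^\ast(v_n^\lambda)$ to be a non-negative combination of the other (independent) images, and once this is known, Lemma \ref{lemma: u,q,p,v} converts such a non-negative combination into a statement that $\langle u_j^\lambda, q \rangle$ for $j \leq n-1$ all have sign opposite to that of $\langle u_n^\lambda, q \rangle$.

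Concretely, I would first use that $\{i_V^\ast(v_j^\lambda)\}_{j=1}^{n-1}$ is a basis of $(\mathfrak{t}^{n-1})^\ast$ to write uniquely
\[
    i_V^\ast(v_n^\lambda) = \sum_{j=1}^{n-1} c_j\, i_V^\ast(v_j^\lambda).
\]
Since $v_n^\lambda \in \mathcal{C}_\lambda$, the cone equality in Assumption \ref{assumption: when good vertex} places $i_V^\ast(v_n^\lambda)$ in the non-negative span of $i_V^\ast(v_1^\lambda),\ldots, i_V^\ast(v_{n-1}^\lambda)$, so by uniqueness $c_j \geq 0$. Because $\lambda$ is good, $i_V^\ast(v_n^\lambda) \neq 0$, so at least one $c_j$ is strictly positive. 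Substituting this expansion into Lemma \ref{lemma: u,q,p,v} and comparing coefficients in the basis $\{i_V^\ast(v_j^\lambda)\}_{j=1}^{n-1}$ yields
\[
    \langle u_j^\lambda, q \rangle = -\, c_j\, \langle u_n^\lambda, q \rangle, \qquad j = 1, \ldots, n-1.
\]

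Next I would rule out $\langle u_n^\lambda, q \rangle = 0$ by the same argument as in Lemma \ref{lemma: not vertex 2}: if it vanished, so would every $\langle u_j^\lambda, q \rangle$, but $u_1^\lambda,\ldots, u_n^\lambda$ is a $\mathbb{Z}$-basis of $\mathbb{Z}^n$, forcing $q=0$, contradicting the choice of $q$. Since $c_j \geq 0$, the formula above then shows that $\langle u_j^\lambda, q\rangle$ and $\langle u_n^\lambda, q\rangle$ carry opposite (weak) signs for every $j \leq n-1$. If $\langle u_n^\lambda, q\rangle < 0$ this gives $\langle u_j^\lambda, q\rangle \geq 0$ for $j=1,\ldots,n-1$, hence $\mathcal{I}^+_\lambda = \{1,\ldots,n-1\}$ and $\mathcal{I}^-_\lambda \setminus \mathcal{I}^0_\lambda = \{n\}$; the symmetric choice $\langle u_n^\lambda, q\rangle > 0$ yields the other alternative. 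The only delicate point, and thus the main obstacle, is realizing that the cone equality in Assumption \ref{assumption: when good vertex} is exactly what is needed to secure $c_j \geq 0$ uniformly: without it one could get mixed signs on the right-hand side and lose the clean splitting.
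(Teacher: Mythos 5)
Your proposal is correct and follows essentially the same route as the paper: expand $i_{V}^{\ast}(v^{\lambda}_{n})$ in the basis $i_{V}^{\ast}(v^{\lambda}_{1}),\ldots,i_{V}^{\ast}(v^{\lambda}_{n-1})$ with non-negative coefficients via the cone equality of Assumption \ref{assumption: when good vertex}, substitute into Lemma \ref{lemma: u,q,p,v} to get $\langle u^{\lambda}_{j},q\rangle = -c_{j}\langle u^{\lambda}_{n},q\rangle$, and exclude $\langle u^{\lambda}_{n},q\rangle = 0$ using that the $u^{\lambda}_{i}$ form a $\mathbb{Z}$-basis. The only difference is your (harmless, unused) remark that some $c_{j}$ is strictly positive.
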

\begin{proof}
From Assumption \ref{assumption: when good vertex},
there exist $S_{1},\ldots, S_{n-1} \in \mathbb{R}_{\geq 0}$ such that 
\begin{equation}
    \label{eq: cone}
    i_{V}^{\ast}(v^{\lambda}_{n})
    =
    \sum_{j=1}^{n-1}S_{j}i_{V}^{\ast}(v^{\lambda}_{j}).
\end{equation}
From Lemma \ref{lemma: u,q,p,v}, we calculate
\begin{align*}
    0 
    &= 
    \sum_{j=1}^{n}\langle u^{\lambda}_{j},q \rangle i_{V}^{\ast}(v^{\lambda}_{j}) \\
    &=
    \sum_{j=1}^{n-1}\langle u^{\lambda}_{j},q \rangle i_{V}^{\ast}(v^{\lambda}_{j}) + \langle u^{\lambda}_{n},q \rangle i_{V}^{\ast}(v^{\lambda}_{n}) \\
    &=
    \sum_{j=1}^{n-1}\left(\langle u^{\lambda}_{j},q \rangle + S_{j} \langle u^{\lambda}_{n},q \rangle \right) i_{V}^{\ast}(v^{\lambda}_{j}).
\end{align*}
Since $i_{V}^{\ast}(v^{\lambda}_{1}),\ldots, i_{V}^{\ast}(v^{\lambda}_{n-1})$ are linearly independent, we obtain 
\begin{equation}
\label{eq: relation formulae good vertex}
\langle u^{\lambda}_{1},q \rangle = - S_{1}\langle u^{\lambda}_{n},q \rangle, \ldots, \langle u^{\lambda}_{n-1},q \rangle = - S_{n-1}\langle u^{\lambda}_{n},q \rangle.
\end{equation}

Assume on the contrary that $\langle u^{\lambda}_{n},q \rangle = 0$.
If $\langle u^{\lambda}_{n},q \rangle = 0$, then we obtain 
\begin{equation}
    \label{eq: relation formulae good vertex plus assumption}
    \langle u^{\lambda}_{1},q \rangle = \cdots = \langle u^{\lambda}_{n-1},q \rangle = 0
\end{equation}
from Equation (\ref{eq: relation formulae good vertex}).
Since the vectors $u^{\lambda}_{1},\ldots,u^{\lambda}_{n}$ form a $\mathbb{Z}$-basis of $\mathbb{Z}^{n}$, Equation (\ref{eq: relation formulae good vertex plus assumption}) means that $q = 0$, i.e. $V^{\perp} = \{0\}$. This is contradiction to the assumption that $V^{\perp}$ is the orthogonal subspace to the $(n-1)$-dimensional subspace $V$ in $\mathfrak{t}^{n} \cong \mathbb{R}^{n}$.
Therefore, $\langle u^{\lambda}_{n},q \rangle \neq 0$.

Since $\langle u^{\lambda}_{n},q \rangle \neq 0$, we obtain $n \not\in \mathcal{I}^{0}_{\lambda}$, i.e. $n \in \mathcal{I}^{+}_{\lambda} \setminus \mathcal{I}^{0}_{\lambda}$ or $n \in \mathcal{I}^{-}_{\lambda} \setminus \mathcal{I}^{0}_{\lambda}$.
From Equation (\ref{eq: relation formulae good vertex}),
if $n \in \mathcal{I}^{+}_{\lambda} \setminus \mathcal{I}^{0}_{\lambda}$, then $1,\ldots,n-1 \in \mathcal{I}^{-}_{\lambda}$.
Similarly, if $n \in \mathcal{I}^{-}_{\lambda} \setminus \mathcal{I}^{0}_{\lambda}$, then $1,\ldots,n-1 \in \mathcal{I}^{+}_{\lambda}$.
\end{proof}

By the definition of the pullback $i_{V}^{\ast}:(\mathfrak{t}^{n})^{\ast} \to (\mathfrak{t}^{n-1})^{\ast}$, it is clear that the vectors $i_{V}^{\ast}(v^{\lambda}_{1}),\ldots,i_{V}^{\ast}(v^{\lambda}_{n})$ are integral vectors.

\begin{lemma}
\label{lemma: good vertex and almost delzant}
Under Assumption \ref{assumption: when good vertex},
if the vectors $i_{V}^{\ast}(v^{\lambda}_{1}),\ldots, i_{V}^{\ast}(v^{\lambda}_{n-1})$ form a $\mathbb{Z}$-basis of $\mathbb{Z}^{n-1}$, then $\langle u^{\lambda}_{n},q \rangle = \pm 1$.
\end{lemma}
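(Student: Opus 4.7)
The plan is to combine the integrality of the expansion coefficients $S_j$ with the fact that $q$ is a primitive integer vector, using the duality $\langle u^\lambda_i, v^\lambda_j\rangle = \delta_{ij}$ that comes from $\Delta$ being Delzant.

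First, I would note that $i_V^{\ast}$ sends integer vectors to integer vectors, since $i_V^{\ast}(\xi) = (\langle p_1,\xi\rangle,\ldots,\langle p_{n-1},\xi\rangle)$ with $p_l \in \mathbb{Z}^n$. In particular $i_V^{\ast}(v^\lambda_n) \in \mathbb{Z}^{n-1}$. Combined with the hypothesis that $i_V^{\ast}(v^\lambda_1),\ldots,i_V^{\ast}(v^\lambda_{n-1})$ form a $\mathbb{Z}$-basis of $\mathbb{Z}^{n-1}$ and the fact from Lemma \ref{lemma: good vertex leads to splitting} (equation \ref{eq: cone}) that $i_V^{\ast}(v^\lambda_n) = \sum_{j=1}^{n-1} S_j i_V^{\ast}(v^\lambda_j)$ with $S_j \geq 0$, the uniqueness of coordinates in a $\mathbb{Z}$-basis forces $S_j \in \mathbb{Z}_{\geq 0}$ for each $j$.

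Next, I would invoke equation \ref{eq: relation formulae good vertex}, which states $\langle u^\lambda_j, q\rangle = -S_j \langle u^\lambda_n, q\rangle$ for $j = 1,\ldots,n-1$. Since each $S_j$ is an integer, this shows that $\langle u^\lambda_n, q\rangle$ divides $\langle u^\lambda_j, q\rangle$ in $\mathbb{Z}$ for every $j = 1,\ldots,n-1$ (and trivially for $j = n$). Therefore $\langle u^\lambda_n, q\rangle$ divides $\gcd\bigl(\langle u^\lambda_1, q\rangle,\ldots,\langle u^\lambda_n, q\rangle\bigr)$.

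Finally, I would show this gcd equals $1$. Because $\Delta$ is Delzant, the relation ${}^t[u^\lambda_1\cdots u^\lambda_n][v^\lambda_1\cdots v^\lambda_n] = E_n$ recalled in Section \ref{subsec: toric manifolds} says that $\{v^\lambda_i\}$ is the dual $\mathbb{Z}$-basis to $\{u^\lambda_i\}$, so any $q \in \mathbb{Z}^n$ expands as $q = \sum_{i=1}^n \langle u^\lambda_i, q\rangle\, v^\lambda_i$. Since $q$ is a primitive vector in $\mathbb{Z}^n$ and $\{v^\lambda_i\}$ is a $\mathbb{Z}$-basis, its coordinate vector must also be primitive, giving $\gcd\bigl(\langle u^\lambda_1, q\rangle,\ldots,\langle u^\lambda_n, q\rangle\bigr) = 1$. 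Combining with the divisibility above yields $\langle u^\lambda_n, q\rangle = \pm 1$, as required.

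No step looks like a serious obstacle; the only subtlety is making sure $S_j$ really is an integer, which rests on $i_V^{\ast}$ preserving integer lattices together with the hypothesis that the $i_V^{\ast}(v^\lambda_j)$ form a $\mathbb{Z}$-basis. Everything else is bookkeeping with the Delzant duality.
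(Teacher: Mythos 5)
Your proof is correct and follows essentially the same route as the paper: both arguments derive the integrality of the coefficients $S_{j}$ from the $\mathbb{Z}$-basis hypothesis, feed this into the relation $\langle u^{\lambda}_{j},q\rangle = -S_{j}\langle u^{\lambda}_{n},q\rangle$, and then use the duality $\langle u^{\lambda}_{i},v^{\lambda}_{j}\rangle=\delta_{ij}$ together with the primitivity of $q$ to force $\langle u^{\lambda}_{n},q\rangle=\pm 1$. Your gcd/divisibility phrasing of the last step is just a repackaging of the paper's explicit computation writing $q$ as $\langle u^{\lambda}_{n},q\rangle$ times an integer vector.
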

\begin{proof}
Since the vectors $i_{V}^{\ast}(v^{\lambda}_{1}),\ldots, i_{V}^{\ast}(v^{\lambda}_{n-1}) \in \mathbb{Z}^{n-1}$ form a $\mathbb{Z}$-basis of $\mathbb{Z}^{n-1}$ and $i_{V}^{\ast}(v^{\lambda}_{n}) \in \mathbb{Z}^{n-1}$,
there exist $Q_{1},\ldots, Q_{n-1} \in \mathbb{Z}$ such that 
\begin{equation*}
    i_{V}^{\ast}(v^{\lambda}_{n})
    =
    \sum_{j=1}^{n-1}Q_{j}i_{V}^{\ast}(v^{\lambda}_{j}).
\end{equation*}
Moreover, since $i_{V}^{\ast}(v^{\lambda}_{n}) \in i_{V}^{\ast}(\mathcal{C}_{\lambda})$ from Assumption \ref{assumption: when good vertex}, $i_{V}^{\ast}(v^{\lambda}_{n})$ can be expressed as the form in Equation (\ref{eq: cone}).
From these equations, we obtain 
\begin{equation*}
    \sum_{j=1}^{n-1}\left(Q_{j}-S_{j} \right) i_{V}^{\ast}(v^{\lambda}_{j}) = 0.
\end{equation*}
Since the vectors $i_{V}^{\ast}(v^{\lambda}_{1}),\ldots, i_{V}^{\ast}(v^{\lambda}_{n-1})$ are linearly independent, 
\begin{equation}
\label{eq: q equal to s}
    Q_{1}= S_{1}, \ldots, Q_{n-1} = S_{n-1}.
\end{equation}
Since $S_{1},\ldots, S_{n-1} \geq 0$, we have $Q_{1},\ldots, Q_{n-1} \geq 0$.
Thus, we obtain $Q_{1},\ldots, Q_{n-1} \in \mathbb{Z}_{\geq 0}$.

From Equation (\ref{eq: relation formulae good vertex}) and Equation (\ref{eq: q equal to s}), we obtain 
\begin{equation}
\label{eq: relation formulae good vertex and lattice basis}
\langle u^{\lambda}_{1},q \rangle = - Q_{1}\langle u^{\lambda}_{n},q \rangle,\ldots,
\langle u^{\lambda}_{n-1},q \rangle = - Q_{n-1}\langle u^{\lambda}_{n},q \rangle.
\end{equation}
From Equation (\ref{eq: relation formulae good vertex and lattice basis}), we obtain
\begin{equation*}
    {^t\!\left[
        \begin{matrix}
            u^{\lambda}_{1} & \cdots & u^{\lambda}_{n}
        \end{matrix}
    \right]}q 
    =
    \left[
        \begin{matrix}
        \langle u^{\lambda}_{1},q \rangle \\
        \vdots \\
        \langle u^{\lambda}_{n},q \rangle
        \end{matrix}
    \right] 
    =
    \langle u^{\lambda}_{n},q \rangle
    \left[
        \begin{matrix}
        -Q_{1} \\
        \vdots \\
        -Q_{n-1} \\
        1
        \end{matrix}
    \right]
\end{equation*}
and since ${^t[u^{\lambda}_{1}\;\cdots\; u^{\lambda}_{n}]}[v^{\lambda}_{1}\;\cdots\; v^{\lambda}_{n}] = E_{n}$, we obtain 
\begin{equation*}
    q 
    =
    \langle u^{\lambda}_{n},q \rangle
    \left[
        \begin{matrix}
            v^{\lambda}_{1} & \cdots & v^{\lambda}_{n}
        \end{matrix}
    \right]
    \left[
        \begin{matrix}
        -Q_{1} \\
        \vdots \\
        -Q_{n-1} \\
        1
        \end{matrix}
    \right].
\end{equation*}
Since the vector $q \in \mathbb{Z}^{n}$ is primitive,
we obtain $\langle u^{\lambda}_{n},q \rangle = \pm 1$.
\end{proof}

From Lemma \ref{lemma: splitting}, Lemma \ref{lemma: good vertex leads to splitting} and Lemma \ref{lemma: good vertex and almost delzant}, we obtain the following:

\begin{cor}
\label{cor: good vertex and almost delzant lead to nonsingular}
Assume that the vertex $\lambda$ of the Delzant polytope $\Delta$ is a good vertex with respect to the map $i_{V}^{\ast}:(\mathfrak{t}^{n})^{\ast} \to (\mathfrak{t}^{n-1})^{\ast}$.
Under Assumption \ref{assumption: when good vertex}, if there exist distinct $j_{1},\ldots,j_{n-1} \in \{1,\ldots,n\}$ such that the vectors $i_{V}^{\ast}(v^{\lambda}_{j_{1}}),\ldots, i_{V}^{\ast}(v^{\lambda}_{j_{n-1}}) \in \mathbb{Z}^{n-1}$ form a $\mathbb{Z}$-basis of $\mathbb{Z}^{n-1}$, then $\overline{C_{\lambda}(V)}$ is nonsingular.
\end{cor}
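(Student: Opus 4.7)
The plan is to observe that this corollary is essentially a direct synthesis of the three lemmas immediately preceding it, so the work is bookkeeping rather than new content. First I would invoke the convention (set just after Lemma \ref{lemma: basis to basis via linear map}) to relabel the basis vectors so that the distinguished indices are $j_1 = 1, \ldots, j_{n-1} = n-1$; that is, after reordering the $v^{\lambda}_i$'s, the direction vectors $i_{V}^{\ast}(v^{\lambda}_{1}),\ldots,i_{V}^{\ast}(v^{\lambda}_{n-1})$ are both linearly independent over $\mathbb{R}$ and form a $\mathbb{Z}$-basis of $\mathbb{Z}^{n-1}$, while the "extra" direction vector is $i_{V}^{\ast}(v^{\lambda}_{n})$. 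This reordering does not affect Assumption \ref{assumption: when good vertex} since that assumption is stated intrinsically in terms of the cone generated by all $n$ direction vectors.

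Next I would apply Lemma \ref{lemma: good vertex leads to splitting}: under Assumption \ref{assumption: when good vertex}, the hypothesis that $\lambda$ is a good vertex forces either
\begin{equation*}
    \mathcal{I}^{+}_{\lambda}= \{1,\ldots, n-1\}, \;\mathcal{I}^{-}_{\lambda}\setminus \mathcal{I}^{0}_{\lambda} = \{n\}
\end{equation*}
or the symmetric configuration with $+$ and $-$ exchanged. Then I would apply Lemma \ref{lemma: good vertex and almost delzant}, which uses the $\mathbb{Z}$-basis hypothesis on $i_{V}^{\ast}(v^{\lambda}_{1}),\ldots,i_{V}^{\ast}(v^{\lambda}_{n-1})$ together with primitivity of $q$ to conclude that $\langle u^{\lambda}_{n}, q \rangle = \pm 1$.

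Having established both the index splitting and the unit inner product $\langle u^{\lambda}_{n},q\rangle = \pm 1$, the final step is a direct invocation of Lemma \ref{lemma: splitting}, whose hypotheses are now exactly met; it gives that $\overline{C_{\lambda}(V)}$ is nonsingular, which is the conclusion. The only real obstacle is the cosmetic one of making sure the $\pm 1$ case (rather than merely the $-1$ case explicitly written out in the proof of Lemma \ref{lemma: splitting}) is covered — but this is immediate by the symmetry of the two splitting configurations above, so no new computation is needed.
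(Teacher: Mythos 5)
Your proposal is correct and follows exactly the paper's route: the paper derives this corollary by citing precisely the same three lemmas (Lemma \ref{lemma: good vertex leads to splitting} for the index splitting, Lemma \ref{lemma: good vertex and almost delzant} for $\langle u^{\lambda}_{n},q\rangle=\pm 1$, and Lemma \ref{lemma: splitting} to conclude nonsingularity), with the same implicit relabeling of indices. No discrepancy to report.
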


\subsection{The Proof of the first Main Theorem}
\label{subsec: proof}

In this section, we give a proof of the first main result.

\begin{thm}
\label{thm: almost delzant to torus equivariant submanifold}
If $V$ is admissible with respect to $\Delta$, then the rank of the Jacobian matrix $Df^{\lambda}$ is equal to one at any point for any vertex $\lambda$ of $\Delta$.
In particular, $\overline{C(V)}$ is a smooth complex hypersurface in the toric manifold $X$ corresponding to $\Delta$.
\end{thm}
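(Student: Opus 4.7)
The plan is to carry out a case analysis on each vertex $\lambda \in \Lambda$ of $\Delta$, using the trichotomy already set up in Sections \ref{subsec: case of orthogonal}, \ref{subsec: case of not vertex}, and \ref{subsec: case of vertex}. Because the defining equations $f^{\lambda}$ of $\overline{C(V)}$ are given chart-by-chart by Definition \ref{def: closure}, the rank condition at every point is a local statement that can be checked one vertex at a time. Thus it suffices to prove that $\overline{C_{\lambda}(V)}$ is nonsingular for every vertex $\lambda$ of $\Delta$; taking the union $\overline{C(V)} = \bigcup_{\lambda \in \Lambda}\varphi_{\lambda}^{-1}(\overline{C_{\lambda}(V)})$ then gives that $\overline{C(V)}$ is a smooth complex hypersurface in $X$.

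Fix a vertex $\lambda$ of $\Delta$. I would split into three mutually exclusive cases. First, if $\lambda$ fails condition (2) of Definition \ref{def: vertex and no direction vectors are perp}, then by Lemma \ref{lemma: not condition 2 iff condition} we have $\mathcal{J}_{\lambda} \neq \emptyset$, and Corollary \ref{cor: vertical edge leads to nonsingular} directly gives nonsingularity of $\overline{C_{\lambda}(V)}$. Second, if $\lambda$ satisfies condition (2) but $i_{V}^{\ast}(\lambda)$ is not a vertex of $i_{V}^{\ast}(\Delta)$, so condition (1) fails, then Corollary \ref{cor: not vertex leads to nonsingular} again yields nonsingularity.

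The third and substantive case is when $\lambda$ is a good vertex with respect to $i_{V}^{\ast}$ in the sense of Definition \ref{def: vertex and no direction vectors are perp}. Here I would invoke the hypothesis that $\Delta$ is a good polytope: by Definition \ref{def: almost delzant}, we may reorder the direction vectors and choose some index $j$ so that the $n-1$ vectors $i_{V}^{\ast}(v^{\lambda}_{1}),\ldots,i_{V}^{\ast}(v^{\lambda}_{j-1}),i_{V}^{\ast}(v^{\lambda}_{j+1}),\ldots,i_{V}^{\ast}(v^{\lambda}_{n})$ are linearly independent, span the cone $i_{V}^{\ast}(\mathcal{C}_{\lambda})$ with nonnegative coefficients, and form a $\mathbb{Z}$-basis of $\mathbb{Z}^{n-1}$. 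The first two properties are precisely Assumption \ref{assumption: when good vertex} (after relabeling so that $j = n$, consistent with the convention fixed after Lemma \ref{lemma: basis to basis via linear map}), while the last property is exactly the hypothesis of Lemma \ref{lemma: good vertex and almost delzant}. Combining Lemma \ref{lemma: good vertex leads to splitting} and Lemma \ref{lemma: good vertex and almost delzant} puts us in the setting of Lemma \ref{lemma: splitting} with $\langle u^{\lambda}_{n}, q\rangle = \pm 1$, so Corollary \ref{cor: good vertex and almost delzant lead to nonsingular} delivers nonsingularity of $\overline{C_{\lambda}(V)}$.

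Having handled all three cases, the rank of $Df^{\lambda}$ equals one at every point of the zero locus for every $\lambda \in \Lambda$, and $\overline{C(V)}$ is a smooth complex hypersurface in $X$. The main obstacle I anticipate is bookkeeping in the good-vertex case: one must be careful that the two bullets in Definition \ref{def: almost delzant} line up correctly with Assumption \ref{assumption: when good vertex} and with the $\mathbb{Z}$-basis hypothesis of Lemma \ref{lemma: good vertex and almost delzant} after the choice of the distinguished index $j$, and that the convention $j_1 = 1, \ldots, j_{n-1} = n-1$ adopted earlier can be arranged by symmetry without loss of generality. Beyond this indexing bookkeeping, the proof is essentially an assembly of the results already established.
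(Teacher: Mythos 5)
Your proposal is correct and follows essentially the same route as the paper's own proof: the same three-way case split on each vertex (condition (2) fails, condition (1) fails, good vertex), invoking Corollary \ref{cor: vertical edge leads to nonsingular}, Corollary \ref{cor: not vertex leads to nonsingular}, and Corollary \ref{cor: good vertex and almost delzant lead to nonsingular} respectively. The extra detail you give in the good-vertex case (tracing through Lemmas \ref{lemma: good vertex leads to splitting}, \ref{lemma: good vertex and almost delzant}, and \ref{lemma: splitting}) just unpacks how that corollary is established, so there is no substantive difference.
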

\begin{proof}
It is sufficient to consider each vertex of the Delznt polytope $\Delta$.

If a vertex $\lambda$ of $\Delta$ is not a good vertex with respect to the map $i_{V}^{\ast}$, then the vertex $\lambda$ of $\Delta$ does not satisfy the condition (1) or (2) in Definition \ref{def: vertex and no direction vectors are perp}.
If $\lambda$ does not satisfy the condition (2), i.e. $\mathcal{J}_{\lambda} \neq \emptyset$, then Corollary \ref{cor: vertical edge leads to nonsingular} tells us that $\overline{C_{\lambda}(V)}$ is nonsingular.
If $\lambda$ does not satisfy the condition (1), then Corollary \ref{cor: not vertex leads to nonsingular} tells us that $\overline{C_{\lambda}(V)}$ is nonsingular.
Thus, if a vertex $\lambda$ of $\Delta$ is not a good vertex with respect to the map $i_{V}^{\ast}$, then $\overline{C_{\lambda}(V)}$ is nonsingular.

Since we assume that $V$ is admissible with respect to $\Delta$, if a vertex $\lambda$ of $\Delta$ is a good vertex with respect to the map $i_{V}^{\ast}$, then Assumption \ref{assumption: when good vertex} holds for such $\lambda$ and there exist distinct $j_{1},\ldots,j_{n-1} \in \{1,\ldots,n\}$ such that the vectors $i_{V}^{\ast}(v^{\lambda}_{j_{1}}),\ldots, i_{V}^{\ast}(v^{\lambda}_{j_{n-1}}) \in \mathbb{Z}^{n-1}$ form a $\mathbb{Z}$-basis of $\mathbb{Z}^{n-1}$. In Corollary \ref{cor: good vertex and almost delzant lead to nonsingular}, we show that $\overline{C_{\lambda}(V)}$ is nonsingular in this case.

Therefore, if $V$ is admissible with respect to $\Delta$, then $\overline{C_{\lambda}(V)}$ is nonsingular for any vertex $\lambda$ of $\Delta$, i.e. $\overline{C(V)}$ is a smooth complex hypersurface in the toric manifold $X$.
\end{proof}

\section{From Torus-equivariantly Embedded Toric Hypersurfaces to Delzant Polytopes}
\label{sec: torus-equivariant to delzant}

In this section, we show the second main result (Theorem \ref{thm: torus equivariant submanifold to almost delzant}).

Assume that the rank of the Jacobian matrix $Df^{\lambda}$ is equal to one at any point of the zero locus of $f^{\lambda}$ for any vertex $\lambda$ of the Delzant polytope, i.e. $\overline{C(V)}$ is nonsingular.
In this case, as we show in Proposition \ref{prop: toric}, $\overline{C(V)}$ is toric with respect to the Hamiltonian $T^{n-1}$-action defined in \cite{MR4938007} with the moment map $\overline{\mu} = i_{V}^{\ast} \circ \mu\mid_{\overline{C(V)}}$.
Moreover, we have the following:
\begin{thm}[{\cite[Theorem 4.20]{MR4938007}}]
\label{thm: delzant for submanifold}
Assume that $\overline{C(V)}$ is a complex submanifold in the toric manifold $X$.
Then, we obtain 
\begin{equation*}
    \overline{\mu}(\overline{C(V)})
    =
    i_{V}^{\ast}(\Delta),
\end{equation*}
where $\Delta$ is the Delzant polytope of the toric manifold $X$.
\end{thm}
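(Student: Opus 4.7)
The plan is to establish the two inclusions separately, writing $\overline{\mu} = i_V^* \circ \mu \circ i$ as in Proposition \ref{prop: toric}.

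The inclusion $\overline{\mu}(\overline{C(V)}) \subseteq i_V^*(\Delta)$ is immediate: since $\mu(X) = \Delta$ and $\overline{C(V)} \subseteq X$, we have $\mu(\overline{C(V)}) \subseteq \Delta$, and applying $i_V^*$ yields the claim.

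For the reverse inclusion, I would work in the log-affine chart $X \setminus D \cong \mathfrak{t}^n \times T^n$, in which the $T^n$-invariant moment map $\mu$ depends only on the $\mathfrak{t}^n$-factor and equals the gradient $L := \nabla F$ of the Guillemin K\"ahler potential $F$; here $L$ is a diffeomorphism $\mathfrak{t}^n \to \mathrm{int}(\Delta)$. Since $C(V) \cong V \times i_V(T^{n-1})$ in this chart, one has $\overline{\mu}(C(V)) = i_V^*(L(V))$, and the goal reduces to showing that
\[
i_V^* \circ L|_V \colon V \longrightarrow \mathrm{int}(i_V^*(\Delta))
\]
is a diffeomorphism. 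I would verify this in three steps: (i) the derivative $i_V^* \circ \mathrm{Hess}(F)(v)|_{\overline{V}}$ is an isomorphism at every $v \in V$, because any $w \in \overline{V}$ with $\mathrm{Hess}(F)(v)(w) \in \ker i_V^* = \overline{V}^\perp$ is $\mathrm{Hess}(F)(v)$-orthogonal to $\overline{V}$ and hence zero by positive-definiteness; (ii) the same argument applied to the averaged Hessian appearing in the secant $L(v_1) - L(v_2) = \bigl(\int_0^1 \mathrm{Hess}(F)(v_2 + t(v_1 - v_2))\, dt\bigr)(v_1 - v_2)$ yields global injectivity; (iii) properness follows from the standard asymptotic description of the Legendre transform -- as $v \to \infty$ in $V$ with direction $d \in \overline{V} \setminus \{0\}$, the point $L(v)$ approaches the support face $F_d \subset \partial \Delta$ where $\xi \mapsto \xi(d)$ is maximized, and since $d$ descends under $(i_V)_*$ to a nonzero $\tilde d \in \mathfrak{t}^{n-1}$, the image $i_V^*(F_d)$ is the corresponding support face of $i_V^*(\Delta)$ in direction $\tilde d$, which lies on $\partial(i_V^*(\Delta))$.

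A proper local diffeomorphism between connected manifolds is a covering map, and the target $\mathrm{int}(i_V^*(\Delta))$ is simply connected, so $i_V^* \circ L|_V$ is a diffeomorphism; hence $\overline{\mu}(C(V)) = \mathrm{int}(i_V^*(\Delta))$. Taking closures in the compact manifold $X$, the compact image $\overline{\mu}(\overline{C(V)})$ contains $\mathrm{int}(i_V^*(\Delta))$ and therefore equals the closed polytope $i_V^*(\Delta)$. The hard part will be step (iii): while (i) and (ii) follow cleanly from the positive-definiteness of $\mathrm{Hess}(F)$, the properness requires a careful analysis of which face of $\Delta$ is approached as $v$ escapes to infinity along $V$ and of how that face projects under $i_V^*$.
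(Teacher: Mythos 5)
The paper does not prove this statement: it is imported verbatim from \cite[Theorem 4.20]{yamaguchi2023toric}, so there is no in-paper argument to compare yours against. On its own terms, your proposal is a sound and essentially complete route to the result. The easy inclusion is fine, and steps (i) and (ii) are correct applications of positive-definiteness of $\mathrm{Hess}(F)$ together with the identification $\ker i_{V}^{\ast} = \overline{V}^{\perp}$. For step (iii), which you rightly flag as the delicate point, the key simplification is the adjunction $\langle i_{V}^{\ast}\xi, \tilde{d}\rangle = \langle \xi, (i_{V})_{\ast}\tilde{d}\rangle = \langle \xi, d\rangle$ for $d = (i_{V})_{\ast}\tilde{d} \in \overline{V}$ (note $d$ \emph{lifts} to $\tilde{d}$ rather than ``descends''): it shows directly that $\langle i_{V}^{\ast}(L(v_{k})), \tilde{d}\rangle \to \max_{\Delta}\langle \cdot, d\rangle = \max_{i_{V}^{\ast}(\Delta)}\langle \cdot, \tilde{d}\rangle$ whenever $v_{k}\to\infty$ with normalized directions converging to $d$, so the images leave every compact subset of $\mathrm{int}(i_{V}^{\ast}(\Delta))$ without any need to identify which face of $\Delta$ is approached. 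One should also record that $i_{V}^{\ast}(\mathrm{int}\,\Delta) = \mathrm{int}(i_{V}^{\ast}(\Delta))$ because a surjective linear map is open. Two further remarks: your argument never uses the hypothesis that $\overline{C(V)}$ is a smooth submanifold (only that $\overline{C(V)}$ contains the topological closure of $C(V)$ and that $\overline{\mu} = i_{V}^{\ast}\circ\mu$ restricted to it is continuous), so you are in fact proving a slightly stronger statement; and the covering-map conclusion at the end is more than you need, since surjectivity of a proper continuous map onto a connected target already follows from its image being open and closed.
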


Recall that if $\overline{C(V)}$ is a complex submanifold, then $\overline{C(V)}$ is a symplectic toric manifold with respect to the $T^{n-1}$-action on $\overline{C(V)}$ (Proposition \ref{prop: toric}). 
By the Delzant correspondence, the moment polytope of the submanifold $\overline{C(V)}$ should be Delzant, i.e. we obtain the following:

\begin{cor}
\label{cor: delzant}
Assume that $\overline{C(V)}$ is a complex submanifold in the toric manifold $X$.
Then, the polytope $i_{V}^{\ast}(\Delta)$ is a Delzant polytope.
\end{cor}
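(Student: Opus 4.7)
The plan is to combine the three facts already on the table: Proposition \ref{prop: toric} (promoting the hypothesis "complex submanifold" into "symplectic toric"), Theorem \ref{thm: delzant for submanifold} (identifying the moment image with $i_V^*(\Delta)$), and Delzant's classification theorem itself. The proof should be extremely short, essentially a syllogism, because the substantive work has already been done in Section \ref{sec: review}.

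First I would observe that $\overline{C(V)}$ has complex dimension $n-1$, hence real dimension $2(n-1)$, and that $T^{n-1}$ has real dimension $n-1$, so the action is precisely of half-dimensional torus type. By Proposition \ref{prop: toric}, this action is effective and Hamiltonian, with moment map $\overline{\mu}=i_V^*\circ \mu \circ i$, and the pulled-back form $i^*\omega$ is symplectic since $\overline{C(V)}$ is assumed to be a complex (hence in particular symplectic) submanifold. Thus $(\overline{C(V)}, i^*\omega, T^{n-1}, \overline{\mu})$ satisfies every axiom of a symplectic toric manifold.

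Next, I would invoke Delzant's theorem: the moment polytope of any symplectic toric manifold is a Delzant polytope. Finally, Theorem \ref{thm: delzant for submanifold} identifies this moment polytope explicitly as $\overline{\mu}(\overline{C(V)})=i_V^*(\Delta)$. Combining the two gives that $i_V^*(\Delta)$ is Delzant, which is the desired conclusion.

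There is essentially no obstacle here since every ingredient is already proved upstream; the only thing to be careful about is that the symplectic form on $\overline{C(V)}$ used to apply Delzant's theorem is precisely the one produced in the proof of Proposition \ref{prop: toric}, namely $i^*\omega$, and that the $T^{n-1}$-action is the one induced through $i_V:T^{n-1}\to T^n$. Both points are already recorded in Proposition \ref{prop: toric}, so the corollary follows in two or three lines.
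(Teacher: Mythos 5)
Your proposal is correct and follows exactly the same route as the paper: the text preceding Corollary \ref{cor: delzant} derives it by combining Proposition \ref{prop: toric} (which makes $\overline{C(V)}$ a symplectic toric manifold for the $T^{n-1}$-action), Theorem \ref{thm: delzant for submanifold} (which identifies the moment image as $i_{V}^{\ast}(\Delta)$), and the Delzant correspondence. Nothing further is needed.
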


In particular, the polytope $i_{V}^{\ast}(\Delta)$ is simple, i.e. Assumption \ref{assumption: when good vertex} holds under the assumption that $\overline{C(V)}$ is a smooth complex hypersurface.

To show the second main result, we have to consider the case when the vertex $\lambda$ of $\Delta$ is good with respect to the map $i_{V}^{\ast}$.

Hereafter, we assume that the vertex $\lambda$ of $\Delta$ is good with respect to the map $i_{V}^{\ast}$.
If $\overline{C(V)}$ is a complex hypersurface, then from Lemma \ref{lemma: good vertex leads to splitting}, we may further assume that 
$\mathcal{I}^{+}_{\lambda} = \{1,\ldots, n-1\}$ and $\mathcal{I}^{-}_{\lambda} \setminus \mathcal{I}^{0}_{\lambda} = \{n\}$ under the Assumption \ref{assumption: when good vertex}.

\begin{lemma}
\label{lemma: simple leads to actual splitting}
Assume that the vertex $\lambda$ of $\Delta$ is good with respect to the map $i_{V}^{\ast}$.
Under the Assumption \ref{assumption: when good vertex}, we obtain $\mathcal{I}^{+}_{\lambda} \setminus \mathcal{I}^{0}_{\lambda} \neq \emptyset$ and $\mathcal{I}^{-}_{\lambda} \setminus \mathcal{I}^{0}_{\lambda} \neq \emptyset$.
\end{lemma}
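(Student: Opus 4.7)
The plan is to invoke Lemma \ref{lemma: good vertex leads to splitting} to split into two symmetric cases, then derive a contradiction from the hypothesis that one of the sets is empty by combining Lemma \ref{lemma: u,q,p,v} with condition (2) of Definition \ref{def: vertex and no direction vectors are perp} (the nonvanishing of $i_{V}^{\ast}(v^{\lambda}_{j})$ for all $j$, which is part of the good vertex hypothesis).

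In detail: under Assumption \ref{assumption: when good vertex}, Lemma \ref{lemma: good vertex leads to splitting} gives the two cases
\[
\mathcal{I}^{+}_{\lambda}=\{1,\ldots,n-1\},\ \mathcal{I}^{-}_{\lambda}\setminus\mathcal{I}^{0}_{\lambda}=\{n\}
\quad\text{or}\quad
\mathcal{I}^{+}_{\lambda}\setminus\mathcal{I}^{0}_{\lambda}=\{n\},\ \mathcal{I}^{-}_{\lambda}=\{1,\ldots,n-1\}.
\]
In the first case $\mathcal{I}^{-}_{\lambda}\setminus\mathcal{I}^{0}_{\lambda}=\{n\}$ is already nonempty, so the content is to show $\mathcal{I}^{+}_{\lambda}\setminus\mathcal{I}^{0}_{\lambda}\neq\emptyset$. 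Assume for contradiction that it is empty; then $\langle u^{\lambda}_{j},q\rangle=0$ for all $j=1,\ldots,n-1$, while $\langle u^{\lambda}_{n},q\rangle<0$ by the case assumption. Substituting into the identity of Lemma \ref{lemma: u,q,p,v}
\[
\sum_{j=1}^{n}\langle u^{\lambda}_{j},q\rangle\,i_{V}^{\ast}(v^{\lambda}_{j})=0
\]
collapses the sum to $\langle u^{\lambda}_{n},q\rangle\,i_{V}^{\ast}(v^{\lambda}_{n})=0$, forcing $i_{V}^{\ast}(v^{\lambda}_{n})=0$. This contradicts condition (2) of Definition \ref{def: vertex and no direction vectors are perp} for the good vertex $\lambda$, which requires all $i_{V}^{\ast}(v^{\lambda}_{i})$ to be nonzero.

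The second case is handled by the exact same argument with the roles of $\mathcal{I}^{+}_{\lambda}\setminus\mathcal{I}^{0}_{\lambda}$ and $\mathcal{I}^{-}_{\lambda}\setminus\mathcal{I}^{0}_{\lambda}$ swapped: assuming $\mathcal{I}^{-}_{\lambda}\setminus\mathcal{I}^{0}_{\lambda}=\emptyset$ yields $\langle u^{\lambda}_{j},q\rangle=0$ for $j=1,\ldots,n-1$ and $\langle u^{\lambda}_{n},q\rangle>0$, and Lemma \ref{lemma: u,q,p,v} again gives $i_{V}^{\ast}(v^{\lambda}_{n})=0$, contradicting the good vertex condition. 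There is essentially no obstacle; the only point requiring a moment's care is reading off condition (2) at the correct index $n$, which is guaranteed because $\{n\}$ is picked out in the splitting conclusion of Lemma \ref{lemma: good vertex leads to splitting} and the good vertex assumption applies uniformly to all edge direction vectors, including $v^{\lambda}_{n}$.
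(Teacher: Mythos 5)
Your proposal is correct and follows essentially the same route as the paper's own proof: reduce via Lemma \ref{lemma: good vertex leads to splitting} to the splitting case, assume for contradiction that $\mathcal{I}^{+}_{\lambda}\setminus\mathcal{I}^{0}_{\lambda}=\emptyset$, and use Lemma \ref{lemma: u,q,p,v} to force $i_{V}^{\ast}(v^{\lambda}_{n})=0$, contradicting condition (2) of the good vertex hypothesis. The only cosmetic difference is that you spell out the symmetric second case, while the paper treats one case and leaves the other implicit.
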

\begin{proof}
From Lemma \ref{lemma: good vertex leads to splitting}, we may assume that $\mathcal{I}^{+}_{\lambda} = \{1,\ldots, n-1\}$ and $\mathcal{I}^{-}_{\lambda} \setminus \mathcal{I}^{0}_{\lambda} = \{n\}$.

Assume on the contrary that $\mathcal{I}^{+}_{\lambda} \setminus \mathcal{I}^{0}_{\lambda} = \emptyset$, i.e. $\langle u^{\lambda}_{1},q \rangle = \cdots = \langle u^{\lambda}_{n-1},q \rangle = 0$.
From Lemma \ref{lemma: u,q,p,v}, we obtain 
\begin{equation*}
    0 = \sum_{j=1}^{n}\langle u^{\lambda}_{j},q \rangle i_{V}^{\ast} (v^{\lambda}_{j}) 
    = \langle u^{\lambda}_{n},q \rangle i_{V}^{\ast}(v^{\lambda}_{n}).
\end{equation*}
Since $\langle u^{\lambda}_{n},q \rangle \neq 0$, we obtain $i_{V}^{\ast}(v^{\lambda}_{n}) = 0$, which is contradiction to the assumption that the vertex $\lambda$ of $\Delta$ is good with respect to the map $i_{V}^{\ast}$.
\end{proof}

\begin{lemma}
\label{lemma: nonsingular and good vertex lead to two cases}
Assume that the vertex $\lambda$ of $\Delta$ is good with respect to the map $i_{V}^{\ast}$ and $\mathcal{I}^{+}_{\lambda} = \{1,\ldots, n-1\}$ and $\mathcal{I}^{-}_{\lambda} \setminus \mathcal{I}^{0}_{\lambda} = \{n\}$.
If $\overline{C(V)}$ is nonsingular, then we obtain at least one of the following:
\begin{enumerate}
    \item there exists $i_{0} \in \mathcal{I}^{+}_{\lambda}$ such that $\langle u^{\lambda}_{i_{0}},q \rangle = 1$ and $\langle u^{\lambda}_{j},q \rangle = 0$ for any $j \in \mathcal{I}^{+}_{\lambda} \setminus \{i_{0}\}$, 
    \item $\langle u^{\lambda}_{n},q \rangle = -1$.
\end{enumerate}
\end{lemma}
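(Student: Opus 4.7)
The plan is to analyze the Jacobian of $f^\lambda$ directly. By \cite[Proposition 2.12]{yamaguchi2024delzant} we may normalize $a=0$, so on the chart centered at $\lambda$ the defining equation becomes
\begin{equation*}
    f^\lambda(z^\lambda) = \prod_{j \in \mathcal{I}^+_\lambda \setminus \mathcal{I}^0_\lambda}(z^\lambda_j)^{\langle u^\lambda_j,q\rangle} - (z^\lambda_n)^{-\langle u^\lambda_n,q\rangle}.
\end{equation*}
Set $\alpha_j := \langle u^\lambda_j,q\rangle \geq 0$ for $j \in \mathcal{I}^+_\lambda$ and $\beta := -\langle u^\lambda_n,q\rangle \geq 1$; by Lemma \ref{lemma: simple leads to actual splitting} the set $\mathcal{I}^+_\lambda \setminus \mathcal{I}^0_\lambda$ is nonempty. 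The partial derivative of $f^\lambda$ with respect to $z^\lambda_i$ is zero for $i \in \mathcal{I}^0_\lambda$, equals $-\beta (z^\lambda_n)^{\beta-1}$ for $i=n$, and equals $\alpha_i(z^\lambda_i)^{\alpha_i-1}\prod_{j \in \mathcal{I}^+_\lambda \setminus \mathcal{I}^0_\lambda,\, j\neq i}(z^\lambda_j)^{\alpha_j}$ for $i \in \mathcal{I}^+_\lambda \setminus \mathcal{I}^0_\lambda$.

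I split on $\beta$. If $\beta = 1$ then conclusion (2) holds. Assume $\beta \geq 2$ for the rest of the argument. I first show that $\mathcal{I}^+_\lambda \setminus \mathcal{I}^0_\lambda$ is a singleton. If it contained two distinct indices $i_0$ and $i_1$, take the point $p$ with $z^\lambda_{i_0}=z^\lambda_{i_1}=z^\lambda_n=0$ and all remaining coordinates equal to $1$. Both monomials of $f^\lambda$ vanish at $p$, so $p \in \overline{C_\lambda(V)}$. At $p$ the partial with respect to $z^\lambda_n$ vanishes because $\beta - 1 \geq 1$; every partial with respect to $z^\lambda_i$ with $i \in \mathcal{I}^+_\lambda \setminus \mathcal{I}^0_\lambda$ still carries a factor of $(z^\lambda_{i_0})^{\alpha_{i_0}}$ or $(z^\lambda_{i_1})^{\alpha_{i_1}}$ (both factors when $i \notin \{i_0,i_1\}$, and whichever survives otherwise), so it also vanishes. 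The remaining partials are identically zero, which contradicts $\mathrm{rank}\, Df^\lambda = 1$ at $p$.

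Hence $\mathcal{I}^+_\lambda \setminus \mathcal{I}^0_\lambda = \{i_0\}$ for a unique index $i_0$, which immediately gives $\langle u^\lambda_j, q \rangle = 0$ for every $j \in \mathcal{I}^+_\lambda \setminus \{i_0\}$. It remains to deduce $\alpha_{i_0} = 1$. For this I evaluate the Jacobian at the point $q_0$ with $z^\lambda_{i_0} = z^\lambda_n = 0$ and every other coordinate equal to $1$. Again $q_0 \in \overline{C_\lambda(V)}$, and the only partial derivative that can fail to vanish a priori is
\begin{equation*}
    \frac{\partial f^\lambda}{\partial z^\lambda_{i_0}}(q_0) = \alpha_{i_0} (z^\lambda_{i_0})^{\alpha_{i_0}-1}\Big|_{z^\lambda_{i_0}=0},
\end{equation*}
which is zero unless $\alpha_{i_0}=1$. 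The hypothesis that $\mathrm{rank}\, Df^\lambda = 1$ at $q_0$ therefore forces $\alpha_{i_0} = 1$, yielding conclusion (1). The main obstacle is locating boundary points of $\overline{C_\lambda(V)}$ on which every partial derivative becomes manifestly degenerate; once the right points are chosen the monomial structure of $f^\lambda$ renders the required vanishings transparent.
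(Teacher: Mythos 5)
Your proof is correct and follows essentially the same strategy as the paper: write out the monomial form of $f^{\lambda}$, compute its partial derivatives, and evaluate at a degenerate point of the zero locus where the rank-one hypothesis forces the exponents $\langle u^{\lambda}_{j},q\rangle$ to be $0$ or $\pm 1$. The only cosmetic difference is that the paper does this in one step at the origin of the chart (the fixed point of the vertex $\lambda$), whereas you split on $\beta=-\langle u^{\lambda}_{n},q\rangle$ and use two partially degenerate points; both arguments are valid.
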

\begin{proof}
From the defining equation for $\overline{C_{\lambda}(V)}$, we obtain that 
\begin{equation*}
    f^{\lambda}(z^{\lambda})
    =
    \prod_{j \in \mathcal{I}^{+}_{\lambda} \setminus \mathcal{I}^{0}_{\lambda}}(z^{\lambda}_{j})^{\langle u^{\lambda}_{j},q \rangle}
    -
    \prod_{j \in \mathcal{I}^{-}_{\lambda} \setminus \mathcal{I}^{0}_{\lambda}}(z^{\lambda}_{j})^{-\langle u^{\lambda}_{j},q \rangle}.
\end{equation*}
Note that $\mathcal{I}^{+}_{\lambda} \setminus \mathcal{I}^{0}_{\lambda} \neq \emptyset$ and $\mathcal{I}^{-}_{\lambda} \setminus \mathcal{I}^{0}_{\lambda} \neq \emptyset$ from Lemma \ref{lemma: simple leads to actual splitting}.
Assume that $\mathcal{I}^{+}_{\lambda} = \{1,\ldots,n-1\}, \; \mathcal{I}^{-}_{\lambda} \setminus \mathcal{I}^{0}_{\lambda} = \{n\}$.
From this equation, we obtain 
\begin{equation*}
    \frac{\partial f^{\lambda}}{\partial z^{\lambda}_{i}} = 
    \begin{cases*}
        \langle u^{\lambda}_{i},q\rangle  \prod_{j \in \mathcal{I}^{+}_{\lambda}\setminus \mathcal{I}^{0}_{\lambda}} (z^{\lambda}_{j})^{\langle u^{\lambda}_{j},q \rangle- \delta_{ij}} & if $i =1,\ldots,n-1$, \\
        \langle u^{\lambda}_{n},q \rangle (z^{\lambda}_{n})^{-\langle u^{\lambda}_{n},q \rangle - 1} & if $i=n$.
    \end{cases*}
\end{equation*}
If $\overline{C(V)}$ is nonsingular, then there exists $i_{0} \in \{ 1,\ldots,n \}$ such that 
\begin{equation*}
    \frac{\partial f^{\lambda}}{\partial z^{\lambda}_{i_{0}}}(o) \neq 0
\end{equation*}
at the origin $o \in \varphi_{\lambda}(U_{\lambda}) \cap \overline{C_{\lambda}(V)}$.
If such $i_{0}$ is an element of $\mathcal{I}^{+}_{\lambda}$, then we obtain $\langle u^{\lambda}_{j},q \rangle - \delta_{i_{0}j} = 0$ for $j =1,\ldots,n-1$, i.e. $\langle u^{\lambda}_{i_{0}},q \rangle = 1$ and $\langle u^{\lambda}_{j},q \rangle = 0$ for any $j \in \mathcal{I}^{+}_{\lambda} \setminus \{i_{0}\}$.
If such $i_{0} \notin \mathcal{I}^{+}_{\lambda}$, i.e. $i_{0} = n$, then $\langle u^{\lambda}_{n},q \rangle = -1$.
\end{proof}

Note that we obtain the similar result to Lemma \ref{lemma: nonsingular and good vertex lead to two cases} when we assume that $\mathcal{I}^{+}_{\lambda}\setminus \mathcal{I}^{0}_{\lambda} = \{ n \}$ and $\mathcal{I}^{-}_{\lambda} = \{ 1, \ldots, n-1\}$.

\subsection{The first case in Lemma \ref{lemma: nonsingular and good vertex lead to two cases}}
We consider the first case in Lemma \ref{lemma: nonsingular and good vertex lead to two cases}.
Hereafter, we assume that $\langle u^{\lambda}_{1},q \rangle = 1$ and $\langle u^{\lambda}_{2},q \rangle = \cdots = \langle u^{\lambda}_{n-1},q \rangle = 0$ for simplicity when we consider the first case in Lemma \ref{lemma: nonsingular and good vertex lead to two cases}.

\begin{lemma}
\label{lemma: simple 1st case 1}
Assume that the vertex $\lambda$ of $\Delta$ is good with respect to the map $i_{V}^{\ast}$.
Moreover, assume that $\langle u^{\lambda}_{1},q \rangle = 1$ and $\langle u^{\lambda}_{2},q \rangle = \cdots = \langle u^{\lambda}_{n-1},q \rangle = 0$.
Then, we obtain 
\begin{equation}
    \label{eq: n is in cone}
    i_{V}^{\ast}(v^{\lambda}_{1})
    =
    - \langle u^{\lambda}_{n},q \rangle i_{V}^{\ast}(v^{\lambda}_{n}).
\end{equation}
\end{lemma}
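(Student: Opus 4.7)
The plan is to obtain Equation \ref{eq: n is in cone} as a direct consequence of Lemma \ref{lemma: u,q,p,v}, which provides the linear relation
\[
\sum_{j=1}^{n}\langle u^{\lambda}_{j},q \rangle\, i_{V}^{\ast}(v^{\lambda}_{j}) = 0
\]
among the images of the direction vectors under the pullback $i_{V}^{\ast}$. The point is that under the stated hypotheses, all but two of the coefficients $\langle u^{\lambda}_{j},q\rangle$ in this sum are zero, so the relation collapses to a two-term identity.

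More precisely, I would first substitute the hypothesized values $\langle u^{\lambda}_{1},q\rangle = 1$ and $\langle u^{\lambda}_{2},q\rangle = \cdots = \langle u^{\lambda}_{n-1},q\rangle = 0$ into the relation provided by Lemma \ref{lemma: u,q,p,v}. The intermediate terms $j = 2,\ldots, n-1$ vanish, leaving
\[
i_{V}^{\ast}(v^{\lambda}_{1}) + \langle u^{\lambda}_{n},q\rangle\, i_{V}^{\ast}(v^{\lambda}_{n}) = 0.
\]
Rearranging yields Equation \ref{eq: n is in cone} immediately, so no further work is required.

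There is essentially no obstacle: the lemma is a bookkeeping consequence of the linear relation in Lemma \ref{lemma: u,q,p,v} together with the specific pairing assumptions. The only thing worth remarking on is that the hypothesis that $\lambda$ is a good vertex (in particular that $i_{V}^{\ast}(v^{\lambda}_{n}) \neq 0$, via condition (2) of Definition \ref{def: vertex and no direction vectors are perp}) is not needed for the algebraic identity itself but is what makes the equation nontrivial and useful for the subsequent arguments about the cone structure at $i_{V}^{\ast}(\lambda)$.
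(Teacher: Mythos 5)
Your proposal is correct and matches the paper's own proof exactly: both substitute the hypothesized pairings into the relation $\sum_{j=1}^{n}\langle u^{\lambda}_{j},q\rangle\, i_{V}^{\ast}(v^{\lambda}_{j})=0$ from Lemma \ref{lemma: u,q,p,v} and rearrange the surviving two terms. Your closing remark that the goodness hypothesis is not needed for the algebraic identity itself is also accurate.
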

\begin{proof}
Since we assume that $\langle u^{\lambda}_{1},q \rangle = 1$ and $\langle u^{\lambda}_{2},q \rangle = \cdots = \langle u^{\lambda}_{n-1},q \rangle = 0$, we obtain 
\begin{equation*}
    \sum_{j=1}^{n}\langle u^{\lambda}_{j},q \rangle i_{V}^{\ast}(v^{\lambda}_{j}) 
    = i_{V}^{\ast}(v^{\lambda}_{1}) + \langle u^{\lambda}_{n},q \rangle i_{V}^{\ast}(v^{\lambda}_{n}).
\end{equation*}
From Lemma \ref{lemma: u,q,p,v}, the left-hand side of this equation is equal to zero. 
Therefore, we obtain Equation (\ref{eq: n is in cone}).
\end{proof}

\begin{remark}
\label{remark: re-assumption simple}
If $\lambda$ is good, Assumption \ref{assumption: when good vertex} holds, and $\overline{C_{\lambda}(V)}$ is nonsingular, then we have $i_{V}^{\ast}(v^{\lambda}_{1}) = - \langle u^{\lambda}_{n},q \rangle i_{V}^{\ast}(v^{\lambda}_{n})$ with $-\langle u^{\lambda}_{n},q \rangle \in \mathbb{Z}_{>0}$.
Therefore, we may use the vector $i_{V}^{\ast}(v^{\lambda}_{n})$ instead of $i_{V}^{\ast}(v^{\lambda}_{1})$ in Assumption \ref{assumption: when good vertex}, i.e. we may assume the following:
\begin{itemize}
    \item the vectors $i_{V}^{\ast}(v^{\lambda}_{2}),\ldots, i_{V}^{\ast}(v^{\lambda}_{n})$ are linearly independent,
    \item we have 
    \begin{equation*}
        i_{V}^{\ast}\left( \left\{\sum_{i=1}^{n}a_{i}v^{\lambda}_{i}\mid a_{i} \geq 0 \right\}\right) = \left\{\sum_{j=2}^{n}b_{j}i_{V}^{\ast}(v^{\lambda}_{j}) \mid b_{j} \geq 0 \right\}.
    \end{equation*}
\end{itemize}
\end{remark}

\begin{lemma}
\label{lemma: det of gram and direction vectors 1st case}
Assume the same as in Lemma \ref{lemma: simple 1st case 1}.
Then, we obtain 
\begin{equation*}
    \det [p_{1}\; \cdots \; p_{n-1} \; q]
    =
    \pm
    \langle q, q \rangle
    \det [i_{V}^{\ast}(v^{\lambda}_{2}) \;\cdots \; i_{V}^{\ast}(v^{\lambda}_{n})].
\end{equation*}
\end{lemma}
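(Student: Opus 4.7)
The plan is to compute $\det[p_1\;\cdots\;p_{n-1}\;q]$ by multiplying by the Delzant basis matrix $[v_1^\lambda\;\cdots\;v_n^\lambda]$, which has determinant $\pm 1$, and then to carry out a single column operation on the resulting inner-product matrix that makes the right-hand side appear by cofactor expansion.

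Concretely, set $A := [p_1\;\cdots\;p_{n-1}\;q]$ and $B := [v_1^\lambda\;\cdots\;v_n^\lambda]$. Since $\Delta$ is Delzant, $\det B = \pm 1$, so $\det A = \pm \det({}^tA\,B)$. The matrix $M := {}^tA\,B$ has $(l,j)$-entry $\langle p_l, v_j^\lambda\rangle$ for $l=1,\dots,n-1$ and $(n,j)$-entry $\langle q, v_j^\lambda\rangle$; in particular the top $n-1$ entries of the $j$-th column are exactly $i_V^\ast(v_j^\lambda)$. If the last row and first column can be simultaneously eliminated up to a factor of $\langle q,q\rangle$, then cofactor expansion will yield $\pm\langle q,q\rangle\det[i_V^\ast(v_2^\lambda)\;\cdots\;i_V^\ast(v_n^\lambda)]$.

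To achieve this, first expand $q$ in the basis $v_1^\lambda,\dots,v_n^\lambda$ using the identity ${}^t[u_1^\lambda\;\cdots\;u_n^\lambda][v_1^\lambda\;\cdots\;v_n^\lambda] = E_n$, which gives $q = \sum_{j=1}^n \langle u_j^\lambda, q\rangle v_j^\lambda$. Under the standing assumption $\langle u_1^\lambda, q\rangle = 1$ and $\langle u_2^\lambda, q\rangle = \cdots = \langle u_{n-1}^\lambda, q\rangle = 0$, this collapses to $q = v_1^\lambda + \langle u_n^\lambda, q\rangle v_n^\lambda$. Now perform the column operation replacing column $1$ of $M$ by column $1 + \langle u_n^\lambda, q\rangle\cdot(\text{column }n)$. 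The top $n-1$ entries of the new first column become $i_V^\ast(v_1^\lambda) + \langle u_n^\lambda, q\rangle i_V^\ast(v_n^\lambda) = 0$ by Lemma \ref{lemma: simple 1st case 1}, and the bottom entry becomes $\langle q, v_1^\lambda\rangle + \langle u_n^\lambda, q\rangle\langle q, v_n^\lambda\rangle = \langle q,\, v_1^\lambda + \langle u_n^\lambda, q\rangle v_n^\lambda\rangle = \langle q,q\rangle$.

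After this operation $\det M$ is unchanged, and expanding along the new first column (the sole nonzero entry being $\langle q,q\rangle$ in row $n$) gives $\det M = \pm \langle q,q\rangle \det[i_V^\ast(v_2^\lambda)\;\cdots\;i_V^\ast(v_n^\lambda)]$, since the complementary minor is precisely the matrix whose $j$-th column ($j=2,\dots,n$) has entries $\langle p_l, v_j^\lambda\rangle = (i_V^\ast(v_j^\lambda))_l$. Combining with $\det A = \pm \det M$ yields the claim. The only subtlety is a bookkeeping one: recognizing that ${}^tU V = E_n$ forces the specific expansion of $q$ with coefficients $\langle u_j^\lambda, q\rangle$, which is exactly what synchronizes the elimination in the top rows with the appearance of $\langle q,q\rangle$ in the bottom row; no genuine computational obstacle arises.
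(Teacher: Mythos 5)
Your proposal is correct and follows essentially the same route as the paper: multiply ${}^t[p_1\;\cdots\;p_{n-1}\;q]$ by $[v_1^\lambda\;\cdots\;v_n^\lambda]$ (determinant $\pm 1$), perform the column operation adding $\langle u_n^\lambda,q\rangle$ times the last column to the first, use Lemma \ref{lemma: simple 1st case 1} to kill the top entries, and identify the surviving bottom entry with $\langle q,q\rangle$ via $q=\sum_j\langle u_j^\lambda,q\rangle v_j^\lambda$. The only cosmetic difference is that you justify the bottom entry directly from the expansion $q=v_1^\lambda+\langle u_n^\lambda,q\rangle v_n^\lambda$, whereas the paper writes it as $\sum_{j=1}^n\langle q,v_j^\lambda\rangle\langle u_j^\lambda,q\rangle=\langle q,q\rangle$; these are the same computation.
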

\begin{proof}
Since from Lemma \ref{lemma: simple 1st case 1}, we obtain 
\begin{align*}
    \left[
        \begin{matrix}
            {^t p_{1}} \\ \vdots \\ {^t p_{n-1}} \\ {^t q} 
        \end{matrix}
    \right]
    \left[
        v^{\lambda}_{1} \; \cdots \; v^{\lambda}_{n}
    \right]
    &=
    \left[
        \begin{matrix}
            \langle p_{1},v^{\lambda}_{1} \rangle & \cdots & \langle p_{1},v^{\lambda}_{n} \rangle \\
            \vdots & \ddots & \vdots \\
            \langle p_{n-1},v^{\lambda}_{1} \rangle & \cdots & \langle p_{n-1},v^{\lambda}_{n} \rangle \\
            \langle q,v^{\lambda}_{1} \rangle & \cdots & \langle q,v^{\lambda}_{n} \rangle
        \end{matrix}
    \right] \\
    &=
    \left[
        \begin{matrix}
            i_{V}^{\ast}(v^{\lambda}_{1}) & \cdots & i_{V}^{\ast}(v^{\lambda}_{n}) \\
            \langle q,v^{\lambda}_{1} \rangle & \cdots & \langle q, v^{\lambda}_{n} \rangle 
        \end{matrix}
    \right] \\
    &=
    \left[
        \begin{matrix}
            -\langle u^{\lambda}_{n},q \rangle i_{V}^{\ast}(v^{\lambda}_{n}) & i_{V}^{\ast}(v^{\lambda}_{2}) & \cdots & i_{V}^{\ast}(v^{\lambda}_{n}) \\
            \langle q,v^{\lambda}_{1} \rangle & \langle q,v^{\lambda}_{2} \rangle & \cdots & \langle q, v^{\lambda}_{n} \rangle 
        \end{matrix}
    \right],
\end{align*}
we obtain by using the cofactor expansion that 
\begin{align*}
    &\det [p_{1}\; \cdots \; p_{n-1} \; q]
    \det [v^{\lambda}_{1} \; \cdots \; v^{\lambda}_{n}] \\
    &=
    \det {^t[p_{1}\; \cdots \; p_{n-1} \; q]}
    \det [v^{\lambda}_{1} \; \cdots \; v^{\lambda}_{n}] \\
    &=
    \det
    \left[
        \begin{matrix}
            -\langle u^{\lambda}_{n},q \rangle i_{V}^{\ast}(v^{\lambda}_{n}) & i_{V}^{\ast}(v^{\lambda}_{2}) & \cdots & i_{V}^{\ast}(v^{\lambda}_{n}) \\
            \langle q,v^{\lambda}_{1} \rangle & \langle q,v^{\lambda}_{2} \rangle & \cdots & \langle q, v^{\lambda}_{n} \rangle 
        \end{matrix}
    \right] \\
    &=
    \det
    \left[
        \begin{matrix}
            0 &i_{V}^{\ast}(v^{\lambda}_{2}) & \cdots & i_{V}^{\ast}(v^{\lambda}_{n}) \\
            \langle q,v^{\lambda}_{1} \rangle + \langle q,v^{\lambda}_{n} \rangle \langle u^{\lambda}_{n},q \rangle & \langle q, v^{\lambda}_{2} \rangle & \cdots & \langle q, v^{\lambda}_{n} \rangle 
        \end{matrix}
    \right] \\
    &=
    \pm \left(
        \langle q,v^{\lambda}_{1} \rangle + \langle q,v^{\lambda}_{n} \rangle \langle u^{\lambda}_{n},q \rangle
    \right)
    \det [i_{V}^{\ast}(v^{\lambda}_{2}) \;\cdots \; i_{V}^{\ast}(v^{\lambda}_{n})].
\end{align*}
Since we assume $\langle u^{\lambda}_{1},q \rangle = 1$ and $\langle u^{\lambda}_{2},q \rangle = \cdots = \langle u^{\lambda}_{n-1},q \rangle = 0$, we obtain 
\begin{equation*}
    \langle q,v^{\lambda}_{1} \rangle + \langle q,v^{\lambda}_{n} \rangle \langle u^{\lambda}_{n},q \rangle
    =
    \sum_{j=1}^{n}\langle q,v^{\lambda}_{j} \rangle \langle u^{\lambda}_{j},q \rangle
    =
    \langle q,q \rangle.
\end{equation*}
Thus, we obtain 
\begin{align*}
    &\pm \left(
        \langle q,v^{\lambda}_{1} \rangle + \langle q,v^{\lambda}_{n} \rangle \langle u^{\lambda}_{n},q \rangle
    \right)
    \det [i_{V}^{\ast}(v^{\lambda}_{2}) \;\cdots \; i_{V}^{\ast}(v^{\lambda}_{n})]
    \\
    &= 
    \pm \langle q,q \rangle \det [i_{V}^{\ast}(v^{\lambda}_{2}) \;\cdots \; i_{V}^{\ast}(v^{\lambda}_{n})]. 
\end{align*}
Since $v^{\lambda}_{1},\ldots, v^{\lambda}_{n}$ form a basis of $\mathbb{Z}^{n}$, $\det [v^{\lambda}_{1} \; \cdots \; v^{\lambda}_{n}] = \pm 1$, and 
\begin{equation*}
    \det [p_{1}\; \cdots \; p_{n-1} \; q]
    =
    \pm \langle q,q \rangle \det [i_{V}^{\ast}(v^{\lambda}_{2}) \;\cdots \; i_{V}^{\ast}(v^{\lambda}_{n})].
\end{equation*}
\end{proof}

\subsection{The second case in Lemma \ref{lemma: nonsingular and good vertex lead to two cases}}

Next, we consider the second case in Lemma \ref{lemma: nonsingular and good vertex lead to two cases}, i.e.
we assume that $\langle u^{\lambda}_{n},q \rangle = -1$ and $\mathcal{I}^{+}_{\lambda} = \{1,\ldots,n-1 \}$.

\begin{lemma}
\label{lemma: simple 2nd case}
Assume that the vertex $\lambda$ of $\Delta$ is good with respect to the map $i_{V}^{\ast}$ and that $\langle u^{\lambda}_{n},q \rangle = -1$ and $\mathcal{I}^{+}_{\lambda} = \{1,\ldots,n-1 \}$.
Then, we obtain 
\begin{equation}
    \label{eq: n is in cone 2nd case}
    i_{V}^{\ast}(v^{\lambda}_{n})
    = \sum_{j=1}^{n-1} \langle u^{\lambda}_{j},q \rangle i_{V}^{\ast}(v^{\lambda}_{j}).
\end{equation}
\end{lemma}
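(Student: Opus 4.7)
The plan is to read this as a direct algebraic consequence of Lemma \ref{lemma: u,q,p,v}. That lemma gives the identity
\begin{equation*}
    \sum_{j=1}^{n} \langle u^{\lambda}_{j}, q \rangle \, i_{V}^{\ast}(v^{\lambda}_{j}) = 0,
\end{equation*}
so my first step is simply to separate the $j=n$ term from the sum, obtaining
\begin{equation*}
    \langle u^{\lambda}_{n}, q \rangle \, i_{V}^{\ast}(v^{\lambda}_{n}) = -\sum_{j=1}^{n-1} \langle u^{\lambda}_{j}, q \rangle \, i_{V}^{\ast}(v^{\lambda}_{j}).
\end{equation*}

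Next, I would invoke the hypothesis $\langle u^{\lambda}_{n}, q \rangle = -1$, which lets me divide through (or equivalently multiply both sides by $-1$) to conclude
\begin{equation*}
    i_{V}^{\ast}(v^{\lambda}_{n}) = \sum_{j=1}^{n-1} \langle u^{\lambda}_{j}, q \rangle \, i_{V}^{\ast}(v^{\lambda}_{j}),
\end{equation*}
which is exactly the claimed Equation \ref{eq: n is in cone 2nd case}.

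There is no real obstacle here: the hypothesis $\mathcal{I}^{+}_{\lambda} = \{1, \ldots, n-1\}$ and the assumption that $\lambda$ is a good vertex are not needed for the displayed identity itself; they play a role only in situating the lemma within the surrounding narrative (ensuring we are genuinely in the second case of Lemma \ref{lemma: nonsingular and good vertex lead to two cases}, where the coefficients $\langle u^{\lambda}_{j}, q \rangle$ for $j \leq n-1$ are nonnegative so that the right-hand side lies in the nonnegative cone spanned by $i_{V}^{\ast}(v^{\lambda}_{1}), \ldots, i_{V}^{\ast}(v^{\lambda}_{n-1})$). Thus the proof reduces to citing Lemma \ref{lemma: u,q,p,v} and substituting $\langle u^{\lambda}_{n}, q \rangle = -1$; no further computation is required.
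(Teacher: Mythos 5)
Your proposal is correct and is essentially identical to the paper's own proof: both isolate the $j=n$ term in the identity $\sum_{j=1}^{n}\langle u^{\lambda}_{j},q\rangle i_{V}^{\ast}(v^{\lambda}_{j})=0$ from Lemma \ref{lemma: u,q,p,v} and substitute $\langle u^{\lambda}_{n},q\rangle=-1$. Your side observation that the remaining hypotheses only serve to situate the lemma in the second case is also accurate.
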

\begin{proof}
From Lemma \ref{lemma: u,q,p,v}, we obtain 
\begin{align*}
    0= \sum_{j=1}^{n} \langle u^{\lambda}_{j},q \rangle i_{V}^{\ast}(v^{\lambda}_{j})
    =
    \sum_{j=1}^{n-1} \langle u^{\lambda}_{j},q \rangle i_{V}^{\ast}(v^{\lambda}_{j}) - i_{V}^{\ast}(v^{\lambda}_{n}).
\end{align*}
Thus, we obtain Equation (\ref{eq: n is in cone 2nd case}).
\end{proof}

\begin{lemma}
\label{lemma: det of gram and direction vectors}
Assume that $\langle u^{\lambda}_{n},q \rangle = -1$ and $\mathcal{I}^{+}_{\lambda} = \{1,\ldots,n-1 \}$.
Then, we obtain 
\begin{equation*}
    \det [p_{1}\; \cdots \; p_{n-1} \; q]
    =
    \pm
    \langle q, q \rangle
    \det [i_{V}^{\ast}(v^{\lambda}_{1}) \;\cdots \; i_{V}^{\ast}(v^{\lambda}_{n-1})].
\end{equation*}
\end{lemma}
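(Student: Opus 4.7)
My plan is to mirror the proof of Lemma \ref{lemma: det of gram and direction vectors 1st case}, using the analogous relation from Lemma \ref{lemma: simple 2nd case} in place of Lemma \ref{lemma: simple 1st case 1}. The starting point is the same factorization
\begin{equation*}
    {^t\!\left[ p_1 \; \cdots \; p_{n-1} \; q \right]}
    \left[ v^{\lambda}_{1} \; \cdots \; v^{\lambda}_{n} \right]
    =
    \left[
        \begin{matrix}
            i_{V}^{\ast}(v^{\lambda}_{1}) & \cdots & i_{V}^{\ast}(v^{\lambda}_{n}) \\
            \langle q, v^{\lambda}_{1} \rangle & \cdots & \langle q, v^{\lambda}_{n} \rangle
        \end{matrix}
    \right],
\end{equation*}
whose determinant equals $\pm \det[p_1\; \cdots\; p_{n-1}\; q]$ since $\det[v^\lambda_1\; \cdots\; v^\lambda_n] = \pm 1$ (Delzant smoothness).

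Next, I would use Lemma \ref{lemma: simple 2nd case}, which under the present hypotheses gives $i_V^\ast(v^\lambda_n) = \sum_{j=1}^{n-1} \langle u^\lambda_j, q\rangle i_V^\ast(v^\lambda_j)$. I then perform the column operation that subtracts $\langle u^\lambda_j, q\rangle$ times the $j$-th column from the $n$-th column for each $j=1,\ldots, n-1$. By construction this zeroes out the top $n-1$ entries of the last column, leaving the bottom-right entry equal to $\langle q, v^\lambda_n\rangle - \sum_{j=1}^{n-1} \langle u^\lambda_j, q\rangle \langle q, v^\lambda_j\rangle$. Cofactor expansion along the last column then yields
\begin{equation*}
    \pm \left( \langle q, v^{\lambda}_{n} \rangle - \sum_{j=1}^{n-1} \langle u^{\lambda}_{j}, q \rangle \langle q, v^{\lambda}_{j} \rangle \right)
    \det \left[ i_{V}^{\ast}(v^{\lambda}_{1}) \; \cdots \; i_{V}^{\ast}(v^{\lambda}_{n-1}) \right].
\end{equation*}

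Finally, I would use the dual basis identity $\langle q, q\rangle = \sum_{j=1}^{n} \langle u^\lambda_j, q\rangle \langle q, v^\lambda_j\rangle$ (which follows from $\langle u^\lambda_i, v^\lambda_j\rangle = \delta_{ij}$, implying $q = \sum_j \langle u^\lambda_j, q\rangle v^\lambda_j$). Since $\langle u^\lambda_n, q\rangle = -1$ by hypothesis, this rearranges to
\begin{equation*}
    \langle q, v^{\lambda}_{n} \rangle - \sum_{j=1}^{n-1} \langle u^{\lambda}_{j}, q \rangle \langle q, v^{\lambda}_{j} \rangle = -\langle q, q \rangle,
\end{equation*}
and the sign is absorbed into the $\pm$. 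The claimed formula follows.

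None of the steps presents any real obstacle; the only point requiring care is bookkeeping of signs in the column operation and cofactor expansion, together with the correct use of the hypothesis $\langle u^\lambda_n, q\rangle = -1$ to convert the remaining expression to $\langle q, q\rangle$. This is a direct structural analogue of Lemma \ref{lemma: det of gram and direction vectors 1st case}, with the roles of the first and last columns swapped.
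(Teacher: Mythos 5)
Your proposal is correct and follows essentially the same route as the paper's proof: the same factorization ${^t[p_1\,\cdots\,p_{n-1}\,q]}[v^\lambda_1\,\cdots\,v^\lambda_n]$, the same column operation based on Lemma \ref{lemma: simple 2nd case}, the same cofactor expansion, and the same use of the dual-basis identity $\sum_{j=1}^{n}\langle u^\lambda_j,q\rangle\langle q,v^\lambda_j\rangle=\langle q,q\rangle$ together with $\langle u^\lambda_n,q\rangle=-1$. No gaps.
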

\begin{proof}
Since we see 
\begin{align*}
    \left[
        \begin{matrix}
            {^t p_{1}} \\ \vdots \\ {^t p_{n-1}} \\ {^t q} 
        \end{matrix}
    \right]
    \left[
        v^{\lambda}_{1} \; \cdots \; v^{\lambda}_{n}
    \right]
    =
    \left[
        \begin{matrix}
            i_{V}^{\ast}(v^{\lambda}_{1}) & \cdots & i_{V}^{\ast}(v^{\lambda}_{n}) \\
            \langle q,v^{\lambda}_{1} \rangle & \cdots & \langle q, v^{\lambda}_{n} \rangle 
        \end{matrix}
    \right],
\end{align*}
we obtain by using the cofactor expansion that
\begin{align*}
    &\det [p_{1}\; \cdots \; p_{n-1} \; q]
    \det [v^{\lambda}_{1} \; \cdots \; v^{\lambda}_{n}] \\
    &=
    \det 
    \left[
        \begin{matrix}
            i_{V}^{\ast}(v^{\lambda}_{1}) & \cdots & i_{V}^{\ast}(v^{\lambda}_{n}) \\
            \langle q,v^{\lambda}_{1} \rangle & \cdots & \langle q, v^{\lambda}_{n} \rangle 
        \end{matrix}
    \right] \\
    &=
    \det 
    \left[
        \begin{matrix}
            i_{V}^{\ast}(v^{\lambda}_{1}) & \cdots & i_{V}^{\ast}(v^{\lambda}_{n-1}) & \sum_{j=1}^{n-1}\langle u^{\lambda}_{j},q \rangle i_{V}^{\ast}(v^{\lambda}_{j}) \\
            \langle q,v^{\lambda}_{1} \rangle & \cdots & \langle q,v^{\lambda}_{n-1} \rangle & \langle q, v^{\lambda}_{n} \rangle 
        \end{matrix}
    \right] \\
    &=
    \det 
    \left[
        \begin{matrix}
            i_{V}^{\ast}(v^{\lambda}_{1}) & \cdots & i_{V}^{\ast}(v^{\lambda}_{n-1}) & 0 \\
            \langle q,v^{\lambda}_{1} \rangle & \cdots & \langle q, v^{\lambda}_{n-1} \rangle & \langle q,v^{\lambda}_{n} \rangle - \sum_{j=1}^{n-1}\langle q,v^{\lambda}_{j} \rangle \langle u^{\lambda}_{j},q \rangle
        \end{matrix}
    \right] \\
    &=
    \left(
        \langle q,v^{\lambda}_{n} \rangle - \sum_{j=1}^{n-1}\langle q,v^{\lambda}_{j} \rangle \langle u^{\lambda}_{j},q \rangle
    \right)
    \det [i_{V}^{\ast}(v^{\lambda}_{1}) \;\cdots \; i_{V}^{\ast}(v^{\lambda}_{n-1})]. 
\end{align*}
Since we assume $\langle u^{\lambda}_{n},q \rangle = -1$, we obtain 
\begin{equation*}
    \langle q,v^{\lambda}_{n} \rangle - \sum_{j=1}^{n-1}\langle q,v^{\lambda}_{j} \rangle \langle u^{\lambda}_{j},q \rangle
    =
    - \sum_{j=1}^{n}\langle q,v^{\lambda}_{j} \rangle \langle u^{\lambda}_{j},q \rangle
    =
    -\langle q,q \rangle.
\end{equation*}
Thus, we obtain 
\begin{align*}  
    &\left(
        \langle q,v^{\lambda}_{n} \rangle - \sum_{j=1}^{n-1}\langle q,v^{\lambda}_{j} \rangle \langle u^{\lambda}_{j},q \rangle
    \right)
    \det [i_{V}^{\ast}(v^{\lambda}_{1}) \;\cdots \; i_{V}^{\ast}(v^{\lambda}_{n-1})] \\
    &=
    -\langle q,q \rangle \det [i_{V}^{\ast}(v^{\lambda}_{1}) \;\cdots \; i_{V}^{\ast}(v^{\lambda}_{n-1})]. 
\end{align*}
Since $v^{\lambda}_{1},\ldots, v^{\lambda}_{n}$ form a basis of $\mathbb{Z}^{n}$, $\det [v^{\lambda}_{1} \; \cdots \; v^{\lambda}_{n}] = \pm 1$, and 
\begin{equation*}
    \det [p_{1}\; \cdots \; p_{n-1} \; q]
    =
    \pm
    \langle q, q \rangle
    \det [i_{V}^{\ast}(v^{\lambda}_{1}) \;\cdots \; i_{V}^{\ast}(v^{\lambda}_{n-1})].
\end{equation*}
\end{proof}

\subsection{The Proof of the second Main Theorem}

By calculating the determinant of the Gram matrix of $[p_{1}\; \cdots \;p_{n-1} \;q]$, we obtain the following:
\begin{lemma}
\label{lemma: det of gram matrix}
For $V = \mathbb{R}p_{1}+\cdots+\mathbb{R}p_{n-1}$ and $V^{\perp}=\mathbb{R}q$, we obtain
\begin{equation*}
    \det [p_{1}\; \cdots \; p_{n-1} \; q]
    =
    \pm \left(
        \langle q, q \rangle  
        \det \left[
            \begin{matrix}
                \langle p_{1},p_{1} \rangle &\cdots & \langle p_{1},p_{n-1} \rangle \\
                \vdots & \ddots &\vdots \\
                \langle p_{n-1},p_{1} \rangle & \cdots & \langle p_{n-1},p_{n-1} \rangle 
            \end{matrix} 
        \right]
    \right)^{\frac{1}{2}}.
\end{equation*}
\end{lemma}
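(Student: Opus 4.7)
The plan is to apply the standard Gram determinant identity together with the orthogonality condition $q \in V^{\perp}$. Set $M := [p_{1}\; \cdots \; p_{n-1}\; q]$, an $n \times n$ matrix with columns in $\mathbb{R}^{n}$. Since for any square matrix $(\det M)^{2} = \det(M^{T}M)$, it suffices to compute the Gram matrix $M^{T}M$ explicitly and show that its determinant factors as $\langle q,q\rangle \cdot \det G$, where $G$ is the Gram matrix of $p_{1},\ldots,p_{n-1}$.

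The first step is to write out $M^{T}M$ entry-wise: its $(i,j)$-entry is $\langle m_{i}, m_{j}\rangle$, where $m_{i}$ is the $i$-th column of $M$. The crucial observation is that $q \in V^{\perp} = (\mathbb{R}p_{1}+\cdots+\mathbb{R}p_{n-1})^{\perp}$, so $\langle p_{i},q\rangle = 0$ for all $i = 1,\ldots,n-1$. This gives the block-diagonal structure
\begin{equation*}
    M^{T}M
    =
    \left[
    \begin{matrix}
        G & 0 \\
        0 & \langle q,q\rangle
    \end{matrix}
    \right],
    \qquad
    G = \left[
    \begin{matrix}
        \langle p_{1},p_{1}\rangle & \cdots & \langle p_{1},p_{n-1}\rangle \\
        \vdots & \ddots & \vdots \\
        \langle p_{n-1},p_{1}\rangle & \cdots & \langle p_{n-1},p_{n-1}\rangle
    \end{matrix}
    \right].
\end{equation*}

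The second step is to take determinants of both sides. Using the block-diagonal form, $\det(M^{T}M) = \langle q,q\rangle \cdot \det G$. Combining with $(\det M)^{2} = \det(M^{T}M)$ and taking square roots gives the desired formula with the $\pm$ ambiguity (coming from the orientation of the basis $p_{1},\ldots,p_{n-1},q$). Note that $\det G > 0$ because $p_{1},\ldots,p_{n-1}$ are linearly independent, and $\langle q,q\rangle > 0$ because $q \neq 0$, so the square root is well-defined.

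There is no real obstacle here; the argument is a routine application of linear algebra, and the proof should fit in a few lines. The content of the lemma is simply that the signed volume of the parallelepiped spanned by $p_{1},\ldots,p_{n-1},q$ equals (up to sign) the length of $q$ times the $(n-1)$-dimensional volume of the parallelepiped spanned by $p_{1},\ldots,p_{n-1}$, which is geometrically obvious from the orthogonal decomposition $\mathbb{R}^{n} = V \oplus V^{\perp}$.
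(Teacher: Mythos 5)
Your proof is correct and follows essentially the same route as the paper: compute the Gram matrix $M^{T}M$, use $\langle p_{i},q\rangle = 0$ to obtain the block-diagonal form, and take the square root of $\det(M^{T}M) = \langle q,q\rangle \det G$. The only cosmetic difference is that you justify positivity of the determinant via linear independence of the $p_{i}$ and $q \neq 0$, while the paper appeals to positive definiteness of Gram matrices; both are fine.
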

\begin{proof}
Since the Gram matrix of $[p_{1}\; \cdots \;p_{n-1} \;q]$ is 
\begin{align*}
    \left[
        \begin{matrix}
            {^t p_{1}} \\
            \vdots \\
            {^t p_{n-1}} \\
            {^t q}
        \end{matrix}    
    \right]
    [p_{1}\; \cdots \;p_{n-1} \;q]
    &=
    \left[
        \begin{matrix}
            \langle p_{1},p_{1} \rangle & \cdots & \langle p_{1},p_{n-1} \rangle & \langle p_{1},q \rangle \\
            \vdots & \ddots & \vdots &\vdots \\ 
            \langle p_{n-1},p_{1} \rangle & \cdots & \langle p_{n-1},p_{n-1} \rangle & \langle p_{1},q \rangle \\
            \langle q,p_{1}\rangle & \cdots & \langle q,p_{n-1} \rangle & \langle q,q \rangle 
        \end{matrix}
    \right] \\
    &=
    \left[
        \begin{matrix}
            \langle p_{1},p_{1} \rangle & \cdots & \langle p_{1},p_{n-1} \rangle & 0 \\
            \vdots & \ddots & \vdots &\vdots \\ 
            \langle p_{n-1},p_{1} \rangle & \cdots & \langle p_{n-1},p_{n-1} \rangle & 0 \\
            0 & \cdots & 0 & \langle q,q \rangle 
        \end{matrix}
    \right],
\end{align*}
we obtain 
\begin{align*}
    (\det [p_{1}\; \cdots \; p_{n-1} \; q])^{2}
    &=
    \det \left(
        \left[
        \begin{matrix}
            {^t p_{1}} \\
            \vdots \\
            {^t p_{n-1}} \\
            {^t q}
        \end{matrix}    
    \right]
    [p_{1}\; \cdots \;p_{n-1} \;q]
    \right) \\
    &=
    \det 
    \left[
        \begin{matrix}
            \langle p_{1},p_{1} \rangle & \cdots & \langle p_{1},p_{n-1} \rangle & 0 \\
            \vdots & \ddots & \vdots &\vdots \\ 
            \langle p_{n-1},p_{1} \rangle & \cdots & \langle p_{n-1},p_{n-1} \rangle & 0 \\
            0 & \cdots & 0 & \langle q,q \rangle 
        \end{matrix}
    \right] \\
    &=
    \langle q, q \rangle  
    \det \left[
        \begin{matrix}
            \langle p_{1},p_{1} \rangle &\cdots & \langle p_{1},p_{n-1} \rangle \\
            \vdots & \ddots &\vdots \\
            \langle p_{n-1},p_{1} \rangle & \cdots & \langle p_{n-1},p_{n-1} \rangle 
        \end{matrix} 
    \right].
\end{align*}
Since Gram matrices are positive definite, i.e. their determinant are nonnegative, we obtain the desired result.
\end{proof}

In our case, we obtain more about the determinant of Gram matrices.
Though the following proposition is found in books \cite[Proposition 1.9.8]{MR1957723} and \cite[Proposition 1.2]{MR2977354}, we give a proof in order to make this paper self-contained.
\begin{prop}
\label{prop: det of gram matrices}
Assume that $p_{1},\ldots,p_{n-1} \in \mathbb{Z}^{n}$ form a $\mathbb{Z}$-basis of $V \cap \mathbb{Z}^{n} \cong \mathbb{Z}^{n-1}$.
Let $q \in \mathbb{Z}^{n}$ be a primitive basis of the orthogonal subspace to $V$ in $\mathbb{R}^{n}$.
For $V = \mathbb{R}p_{1}+\cdots+\mathbb{R}p_{n-1}$ and $V^{\perp}=\mathbb{R}q$, we obtain
\begin{equation*}
    \det \left[
            \begin{matrix}
                \langle p_{1},p_{1} \rangle &\cdots & \langle p_{1},p_{n-1} \rangle \\
                \vdots & \ddots &\vdots \\
                \langle p_{n-1},p_{1} \rangle & \cdots & \langle p_{n-1},p_{n-1} \rangle 
            \end{matrix} 
        \right]
        =
        \langle q,q \rangle.
\end{equation*}
\end{prop}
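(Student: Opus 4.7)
The plan is to reduce the desired identity to a lattice-index computation and then evaluate that index by orthogonal projection onto $V^{\perp}$. By Lemma \ref{lemma: det of gram matrix}, we already have
\begin{equation*}
    (\det[p_{1}\;\cdots\;p_{n-1}\;q])^{2}
    = \langle q,q\rangle\cdot \det[\langle p_{i},p_{j}\rangle]_{i,j=1}^{n-1},
\end{equation*}
so since $q\neq 0$ gives $\langle q,q\rangle>0$, it is enough to prove that
$|\det[p_{1}\;\cdots\;p_{n-1}\;q]| = \langle q,q\rangle$. Geometrically, this determinant is the index $[\mathbb{Z}^{n}:L]$ of the sublattice $L:=\mathbb{Z}p_{1}+\cdots+\mathbb{Z}p_{n-1}+\mathbb{Z}q\subset\mathbb{Z}^{n}$, so the task is to show $[\mathbb{Z}^{n}:L]=\langle q,q\rangle$.

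To compute this index, I would use the orthogonal projection $\pi:\mathbb{R}^{n}\to V^{\perp}=\mathbb{R}q$ given by $\pi(v)=\frac{\langle v,q\rangle}{\langle q,q\rangle}q$. Since $p_{1},\ldots,p_{n-1}$ form a $\mathbb{Z}$-basis of $V\cap\mathbb{Z}^{n}$, the sublattice $V\cap\mathbb{Z}^{n}$ is saturated in $\mathbb{Z}^{n}$, and $\ker(\pi|_{\mathbb{Z}^{n}}) = V\cap\mathbb{Z}^{n}$. Hence $\pi$ induces an injection
\begin{equation*}
    \overline{\pi}:\mathbb{Z}^{n}/(V\cap\mathbb{Z}^{n}) \hookrightarrow V^{\perp},
\end{equation*}
with image the rank-one lattice generated by $\pi(e_{1}),\ldots,\pi(e_{n}) = \frac{q_{1}}{\langle q,q\rangle}q,\ldots,\frac{q_{n}}{\langle q,q\rangle}q$. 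Because $q=(q_{1},\ldots,q_{n})$ is primitive, $\gcd(q_{1},\ldots,q_{n})=1$, so this image equals $\frac{1}{\langle q,q\rangle}q\,\mathbb{Z}$.

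Next I would identify where $q$ sits under this projection: $\pi(q)=q = \langle q,q\rangle\cdot\bigl(\frac{1}{\langle q,q\rangle}q\bigr)$. Thus, in the isomorphism $\mathbb{Z}^{n}/(V\cap\mathbb{Z}^{n})\cong\mathbb{Z}$ sending $\frac{1}{\langle q,q\rangle}q\mapsto 1$, the class of $q$ corresponds to $\langle q,q\rangle$. The image of $L$ in this quotient is generated by the class of $q$, so
\begin{equation*}
    [\mathbb{Z}^{n}:L] = [\mathbb{Z}:\langle q,q\rangle\mathbb{Z}] = \langle q,q\rangle.
\end{equation*}
Substituting back into the identity from Lemma \ref{lemma: det of gram matrix} yields $\langle q,q\rangle^{2}=\langle q,q\rangle\cdot\det[\langle p_{i},p_{j}\rangle]$, and dividing by $\langle q,q\rangle>0$ gives the claim.

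The only subtle step is the computation of the image $\pi(\mathbb{Z}^{n})$; this is where the primitivity of $q$ is essential. The rest is linear algebra combined with the previous Gram-determinant lemma, so I expect no other obstacles.
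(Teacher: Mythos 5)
Your proof is correct, but it takes a genuinely different route from the paper's. The paper also starts from Lemma \ref{lemma: det of gram matrix}, but then extends $p_{1},\ldots,p_{n-1}$ to a $\mathbb{Z}$-basis $p_{1},\ldots,p_{n}$ of $\mathbb{Z}^{n}$ (possible precisely because $V\cap\mathbb{Z}^{n}$ is saturated), computes
\begin{equation*}
    \det[p_{1}\;\cdots\;p_{n-1}\;q]=\pm\langle p_{n},q\rangle\det[\langle p_{i},p_{j}\rangle]_{i,j=1}^{n-1}
\end{equation*}
by a cofactor expansion of ${}^{t}[p_{1}\;\cdots\;p_{n}][p_{1}\;\cdots\;p_{n-1}\;q]$, and finally shows $\langle p_{n},q\rangle=\pm1$ by writing $q=\langle p_{n},q\rangle\,{}^{t}[p_{1}\;\cdots\;p_{n}]^{-1}{}^{t}(0,\ldots,0,1)$ and invoking the primitivity of $q$. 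You instead reinterpret $|\det[p_{1}\;\cdots\;p_{n-1}\;q]|$ as the index $[\mathbb{Z}^{n}:L]$ and evaluate it by projecting $\mathbb{Z}^{n}$ orthogonally onto $\mathbb{R}q$, where primitivity of $q$ enters as $\gcd(q_{1},\ldots,q_{n})=1$ to identify $\pi(\mathbb{Z}^{n})=\tfrac{1}{\langle q,q\rangle}q\,\mathbb{Z}$; the saturation of $V\cap\mathbb{Z}^{n}$ enters to split the index along the exact sequence. The two arguments use exactly the same hypotheses but package them differently: the paper's version is purely matrix algebra and stays self-contained at the level of cofactor expansions, while yours is more conceptual and makes transparent why the answer is $\langle q,q\rangle$ (it is the index of $q\mathbb{Z}$ inside the projected lattice). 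Your chain of reductions is sound -- in particular the step $[\mathbb{Z}^{n}:L]=[\mathbb{Z}^{n}/(V\cap\mathbb{Z}^{n}):\overline{L}]$ is justified because $L\cap V=\mathbb{Z}p_{1}+\cdots+\mathbb{Z}p_{n-1}=V\cap\mathbb{Z}^{n}$ -- though it would be worth stating that equality explicitly, since it is where the hypothesis that the $p_{i}$ form a $\mathbb{Z}$-basis of $V\cap\mathbb{Z}^{n}$ (and not merely of a finite-index sublattice) is used.
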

\begin{proof}
By the assumption, there exists a vector $p_{n} \in \mathbb{Z}^{n}$ such that $p_{1},\ldots, p_{n-1},p_{n}$ form a $\mathbb{Z}$-basis of $\mathbb{Z}^{n}$. 
Since we have
\begin{align*}
    \left[
        \begin{matrix}
            {^t p_{1}} \\ \vdots \\ {^t p_{n-1}} \\ {^t p_{n}} 
        \end{matrix}
    \right]
    \left[
        p_{1} \; \cdots \; p_{n-1} \; q
    \right]
    &=
    \left[
        \begin{matrix}
            \langle p_{1},p_{1}\rangle & \cdots &\langle p_{1},p_{n-1} \rangle & \langle p_{1},q \rangle \\
            \vdots & \ddots & \vdots & \vdots \\ 
            \langle p_{n-1},p_{1} \rangle & \cdots & \langle p_{n-1},p_{n-1} \rangle & \langle p_{n-1},q \rangle \\
            \langle p_{n},p_{1}\rangle & \cdots & \langle p_{n},p_{n-1} \rangle & \langle p_{n},q \rangle 
        \end{matrix}
    \right]\\
    &=
    \left[
        \begin{matrix}
            \langle p_{1},p_{1}\rangle & \cdots &\langle p_{1},p_{n-1} \rangle & 0 \\
            \vdots & \ddots & \vdots & \vdots \\ 
            \langle p_{n-1},p_{1} \rangle & \cdots & \langle p_{n-1},p_{n-1} \rangle & 0 \\
            \langle p_{n},p_{1}\rangle & \cdots & \langle p_{n},p_{n-1} \rangle & \langle p_{n},q \rangle 
        \end{matrix}
    \right],
\end{align*}
we obtain by the cofactor expansion that
\begin{align*}
    &\det [p_{1}\; \cdots \; p_{n-1} \; p_{n}]
    \det [p_{1} \; \cdots \; p_{n-1} \; q] \\
    &=
    \det 
    \left[
        \begin{matrix}
            \langle p_{1},p_{1}\rangle & \cdots &\langle p_{1},p_{n-1} \rangle & 0 \\
            \vdots & \ddots & \vdots & \vdots \\ 
            \langle p_{n-1},p_{1} \rangle & \cdots & \langle p_{n-1},p_{n-1} \rangle & 0 \\
            \langle p_{n},p_{1}\rangle & \cdots & \langle p_{n},p_{n-1} \rangle & \langle p_{n},q \rangle 
        \end{matrix}
    \right] \\
    &= 
    \langle p_{n},q \rangle
    \det 
    \left[
        \begin{matrix}
            \langle p_{1},p_{1}\rangle & \cdots &\langle p_{1},p_{n-1} \rangle  \\
            \vdots & \ddots & \vdots \\ 
            \langle p_{n-1},p_{1} \rangle & \cdots & \langle p_{n-1},p_{n-1} \rangle 
        \end{matrix}
    \right]. 
\end{align*}
Since $p_{1},\ldots,p_{n}$ form a $\mathbb{Z}$-basis of $\mathbb{Z}^{n}$, we obtain $\det [p_{1}\; \cdots \; p_{n-1} \; p_{n}] = \pm 1$.
From Lemma \ref{lemma: det of gram matrix}, we obtain 
\begin{equation}
\label{eq: gram matrix identity}
    \langle p_{n},q \rangle^{2}
    \det 
    \left[
        \begin{matrix}
            \langle p_{1},p_{1}\rangle & \cdots &\langle p_{1},p_{n-1} \rangle  \\
            \vdots & \ddots & \vdots \\ 
            \langle p_{n-1},p_{1} \rangle & \cdots & \langle p_{n-1},p_{n-1} \rangle 
        \end{matrix}
    \right]
    = \langle q,q \rangle.
\end{equation}
Since we have 
\begin{equation*}
    \left[
        \begin{matrix}
            \langle p_{1},q \rangle \\
            \vdots \\
            \langle p_{n-1},q \rangle \\
            \langle p_{n},q \rangle 
        \end{matrix}
    \right]
    =
    \left[
        \begin{matrix}
            {^t p_{1}} \\
            \vdots \\
            {^t p_{n-1}} \\
            {^t p_{n}}
        \end{matrix}
    \right] q,
\end{equation*}
and $p_{1},\ldots,p_{n}$ form a $\mathbb{Z}$-basis of $\mathbb{Z}^{n}$, we obtain 
\begin{align*}
    q 
    &=
    \left[
        \begin{matrix}
            {^t p_{1}} \\
            \vdots \\
            {^t p_{n-1}} \\
            {^t p_{n}}
        \end{matrix}
    \right]^{-1}
    \left[
        \begin{matrix}
            \langle p_{1},q \rangle \\
            \vdots \\
            \langle p_{n-1},q \rangle \\
            \langle p_{n},q \rangle 
        \end{matrix}
    \right] \\
    &=
    \left[
        \begin{matrix}
            {^t p_{1}} \\
            \vdots \\
            {^t p_{n-1}} \\
            {^t p_{n}}
        \end{matrix}
    \right]^{-1}
    \left[
        \begin{matrix}
            0 \\
            \vdots \\
            0 \\
            \langle p_{n},q \rangle 
        \end{matrix}
    \right] \\
    &=
    \langle p_{n},q \rangle 
    \left[
        \begin{matrix}
            {^t p_{1}} \\
            \vdots \\
            {^t p_{n-1}} \\
            {^t p_{n}}
        \end{matrix}
    \right]^{-1}
    \left[
        \begin{matrix}
            0 \\
            \vdots \\
            0 \\
            1
        \end{matrix}
    \right].
\end{align*}
Since we assume that the vector $q$ is primitive, we obtain $\langle p_{n},q \rangle = \pm1$.
Therefore, from Equation (\ref{eq: gram matrix identity}) we obtain the desired equation.
\end{proof}

From Lemma \ref{lemma: det of gram matrix} and Proposition \ref{prop: det of gram matrices}, we obtain the following:

\begin{cor}
\label{cor: det of gram matrices}
Assume that $p_{1},\ldots,p_{n-1} \in \mathbb{Z}^{n}$ form a $\mathbb{Z}$-basis of $V \cap \mathbb{Z}^{n} \cong \mathbb{Z}^{n-1}$.
Assume that the vector $q \in \mathbb{Z}^{n}$ is a primitive basis of the orthogonal subspace to $V$ in $\mathbb{R}^{n}$.
For $V = \mathbb{R}p_{1}+\cdots+\mathbb{R}p_{n-1}$ and $V^{\perp}=\mathbb{R}q$, we obtain 
\begin{equation*}
    \det [p_{1}\; \cdots \; p_{n-1} \; q]
    =
    \pm \langle q, q \rangle .
\end{equation*}
\end{cor}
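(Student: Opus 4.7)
The plan is to derive this corollary as an immediate algebraic consequence of the two preceding results, Lemma \ref{lemma: det of gram matrix} and Proposition \ref{prop: det of gram matrices}. Lemma \ref{lemma: det of gram matrix} already expresses $\det[p_{1}\;\cdots\;p_{n-1}\;q]$ as $\pm\bigl(\langle q,q\rangle\cdot G\bigr)^{1/2}$, where $G$ denotes the determinant of the Gram matrix of $p_{1},\ldots,p_{n-1}$, and this equality holds with no integrality assumption. The extra hypotheses of the corollary, namely that $p_{1},\ldots,p_{n-1}$ form a $\mathbb{Z}$-basis of $V\cap\mathbb{Z}^{n}$ and that $q$ is a primitive vector in $\mathbb{Z}^{n}$ spanning $V^{\perp}$, are precisely the hypotheses of Proposition \ref{prop: det of gram matrices}, which evaluates $G=\langle q,q\rangle$.

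Combining the two relations gives
\[
\det[p_{1}\;\cdots\;p_{n-1}\;q]
= \pm\bigl(\langle q,q\rangle\cdot\langle q,q\rangle\bigr)^{1/2}
= \pm\langle q,q\rangle,
\]
where the last step is justified by $\langle q,q\rangle\geq 0$ (so that the positive square root is simply $\langle q,q\rangle$ itself, up to the overall sign already present in Lemma \ref{lemma: det of gram matrix}). This matches the claimed formula exactly.

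There is essentially no obstacle at this step; all of the genuine content has been pushed into the two results being invoked. The only thing worth double-checking is that the primitivity of $q$ and the $\mathbb{Z}$-basis property of $\{p_{i}\}$ are used consistently, but this matches the ambient running assumptions on $V$ stated in Section \ref{subsec: torus-equivariant}, so the application of Proposition \ref{prop: det of gram matrices} is immediate and the corollary follows in a single line.
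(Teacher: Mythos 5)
Your proposal is correct and is exactly the argument the paper intends: the corollary is stated as an immediate consequence of Lemma \ref{lemma: det of gram matrix} and Proposition \ref{prop: det of gram matrices}, obtained by substituting $G = \langle q,q\rangle$ into $\pm(\langle q,q\rangle \cdot G)^{1/2}$ and taking the square root of $\langle q,q\rangle^{2}$. Nothing further is needed.
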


We use this fact to show that the second main result:
\begin{thm}
\label{thm: torus equivariant submanifold to almost delzant}
If the rank of the Jacobian matrix $Df^{\lambda}$ is equal to one at any point of the zero locus of $f^{\lambda}$ for any vertex $\lambda$ of the Delzant polytope $\Delta$, i.e. $\overline{C(V)}$ is nonsingular, then $V$ is admissible with respect to $\Delta$.
\end{thm}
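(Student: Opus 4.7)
The plan is to reduce the theorem to pieces already assembled in this section and, at each good vertex, match a determinant identity coming from the Jacobian condition against Corollary \ref{cor: det of gram matrices}. First, I would use nonsingularity to invoke Proposition \ref{prop: toric} and Corollary \ref{cor: delzant}: the image $i_V^*(\Delta)$ is a Delzant polytope, and in particular simple. This means Assumption \ref{assumption: when good vertex} is automatic at every good vertex, and the first bullet of Definition \ref{def: almost delzant} is satisfied there (as already noted right after that definition). The remaining content is therefore the $\mathbb{Z}$-basis condition, i.e.\ the second bullet.

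Fix a good vertex $\lambda$. Under Assumption \ref{assumption: when good vertex}, Lemma \ref{lemma: simple leads to actual splitting} together with Lemma \ref{lemma: good vertex leads to splitting} reduce us to the case $\mathcal{I}^+_\lambda = \{1, \ldots, n-1\}$ and $\mathcal{I}^-_\lambda \setminus \mathcal{I}^0_\lambda = \{n\}$, after relabeling (the mirror case is identical). The nonsingularity hypothesis then feeds into Lemma \ref{lemma: nonsingular and good vertex lead to two cases}, producing two alternatives: either exactly one $\langle u_{i_0}^\lambda, q\rangle$ equals $1$ with the other $\langle u_j^\lambda, q\rangle$ for $j \in \mathcal{I}^+_\lambda$ vanishing, or $\langle u_n^\lambda, q\rangle = -1$. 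I would dispatch each alternative separately by the appropriate determinant formula already proved: Lemma \ref{lemma: det of gram and direction vectors 1st case} in the first case and Lemma \ref{lemma: det of gram and direction vectors} in the second. In either case, combining with Corollary \ref{cor: det of gram matrices}, which gives $\det[p_1\;\cdots\;p_{n-1}\;q] = \pm \langle q, q\rangle$, the factor $\langle q, q\rangle$ cancels and the determinant of the chosen $(n-1)$-tuple of image direction vectors is $\pm 1$, so those vectors form a $\mathbb{Z}$-basis of $\mathbb{Z}^{n-1}$.

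The main subtlety I anticipate is bookkeeping rather than estimation: in the first alternative of Lemma \ref{lemma: nonsingular and good vertex lead to two cases}, Lemma \ref{lemma: simple 1st case 1} shows that $i_V^*(v_1^\lambda)$ is a positive scalar multiple of $i_V^*(v_n^\lambda)$, so the $(n-1)$-tuple giving the actual edges at the vertex $i_V^*(\lambda)$ of $i_V^*(\Delta)$ is $i_V^*(v_2^\lambda), \ldots, i_V^*(v_n^\lambda)$ rather than $i_V^*(v_1^\lambda), \ldots, i_V^*(v_{n-1}^\lambda)$; this is the content of Remark \ref{remark: re-assumption simple}. Tracking which $v_j^\lambda$ one actually drops in each of the two alternatives, and correspondingly which of the two determinant identities to invoke, is the one point requiring care; once that choice is made, the $\pm 1$ determinant conclusion is immediate from Corollary \ref{cor: det of gram matrices}, and the theorem follows.
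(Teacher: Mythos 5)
Your proposal is correct and follows essentially the same route as the paper: reduce via Lemma \ref{lemma: good vertex leads to splitting} and Lemma \ref{lemma: nonsingular and good vertex lead to two cases} to the two alternatives at each good vertex, then apply Lemma \ref{lemma: det of gram and direction vectors 1st case} or Lemma \ref{lemma: det of gram and direction vectors} together with Corollary \ref{cor: det of gram matrices} to get determinant $\pm 1$ for the correct $(n-1)$-tuple of image direction vectors. Your explicit handling of the simplicity condition via Corollary \ref{cor: delzant}, and of the bookkeeping in Remark \ref{remark: re-assumption simple} about dropping $v^{\lambda}_{1}$ rather than $v^{\lambda}_{n}$ in the first alternative, matches what the paper does in the remarks preceding its proof.
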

\begin{proof}
It is sufficient to consider each vertex $\lambda$ of the Delznt polytope $\Delta$.

From Lemma \ref{lemma: good vertex leads to splitting} and Lemma \ref{lemma: nonsingular and good vertex lead to two cases}, if the vertex $\lambda$ of $\Delta$ is good with respect to the map $i_{V}^{\ast}$, then we obtain the two cases:
\begin{enumerate}
    \item there exists $i_{0} \in \mathcal{I}^{+}_{\lambda}$ such that $\langle u^{\lambda}_{i_{0}},q \rangle = 1$ and $\langle u^{\lambda}_{j},q \rangle = 0$ for any $j \in \mathcal{I}^{+}_{\lambda} \setminus \{i_{0}\}$, 
    \item $\langle u^{\lambda}_{n},q \rangle = -1$.
\end{enumerate}

We first consider the first case.
In this case, we can assume further that $\langle u^{\lambda}_{1},q \rangle = 1$ and $\langle u^{\lambda}_{2},q \rangle = \cdots = \langle u^{\lambda}_{n-1},q \rangle = 0$.
Under this assumption, Remark \ref{remark: re-assumption simple} shows that the vectors $i_{V}^{\ast}(v^{\lambda}_{2}), \ldots, i_{V}^{\ast}(v^{\lambda}_{n})$ are linearly independent and 
\begin{equation*}
        i_{V}^{\ast}\left( \left\{\sum_{i=1}^{n}a_{i}v^{\lambda}_{i}\mid a_{i} \geq 0 \right\}\right) = \left\{\sum_{j=2}^{n}b_{j}i_{V}^{\ast}(v^{\lambda}_{j}) \mid b_{j} \geq 0 \right\}.
\end{equation*} 
From Lemma \ref{lemma: det of gram and direction vectors 1st case} and Corollary \ref{cor: det of gram matrices}, we obtain 
\begin{equation*}
    \det 
    [i_{V}^{\ast}(v^{\lambda}_{2}) \; \cdots i_{V}^{\ast}(v^{\lambda}_{n})]
    = \pm1,
\end{equation*}
which shows that the vectors $i_{V}^{\ast}(v^{\lambda}_{2}), \ldots, i_{V}^{\ast}(v^{\lambda}_{n})$ form a $\mathbb{Z}$-basis of $\mathbb{Z}^{n-1}$.

We next consider the second case. 
In this case, from Lemma \ref{lemma: det of gram and direction vectors} and Corollary \ref{cor: det of gram matrices}, we obtain 
\begin{equation*}
    \det 
    [i_{V}^{\ast}(v^{\lambda}_{1}) \; \cdots i_{V}^{\ast}(v^{\lambda}_{n-1})]
    = \pm1,
\end{equation*}
which shows that the vectors $i_{V}^{\ast}(v^{\lambda}_{1}), \ldots, i_{V}^{\ast}(v^{\lambda}_{n-1})$ form a $\mathbb{Z}$-basis of $\mathbb{Z}^{n-1}$.

Therefore, $V$ is admissible with respect to $\Delta$.
\end{proof}

\section{Examples} \label{sec: example}
We demonstrate some examples to illustrate admissible subspaces with respect to given Delzant polytopes (Definition \ref{def: almost delzant}).
Let $\Delta_{n} \subset (\mathfrak{t}^{n})^{\ast}$ denote the standard $n$-simplex,
i.e.
$\Delta_{n}$ is the convex hull of the points
\begin{equation*}
    \sigma_{0} := (0,\ldots,0), \;
    \sigma_{1} := (1,0,\ldots,0), \ldots,
    \sigma_{n} := (0,\ldots,0,1).
\end{equation*}
Let $e_{1},\ldots,e_{n}$ denote the standard basis of $(\mathfrak{t}^{n})^{\ast}$.

For the rest of this section, we use $\Delta_{2}$ and $\Delta_{4}$.
When $n=2$, the direction vectors $v^{i}_{1},\ldots, v^{i}_{n}$ of the vertex $\sigma_{i}$ for $i = 0,1,2$ is given by 
\begin{equation*}
    v^{0}_{1} = e_{1}, \; v^{0}_{2} = e_{2}, \;
    v^{1}_{1} = -e_{1} + e_{2}, \; v^{1}_{2} = -e_{1}, \;
    v^{2}_{1} = - e_{2}, \; v^{2}_{2} = e_{1} - e_{2}.
\end{equation*}
When $n=4$, the direction vectors $v^{i}_{1},\ldots, v^{i}_{n}$ of the vertex $\sigma_{i}$ for $i = 0,\ldots,4$ is given by 
\begin{align*}
    &v^{0}_{1} = e_{1}, \; v^{0}_{2} = e_{2}, \; v^{0}_{3} = e_{3}, \; v^{0}_{4} = e_{4}, \\
    &v^{1}_{1} = -e_{1}, \; v^{1}_{2} = e_{2} - e_{1}, \; v^{1}_{3} = e_{3} - e_{1}, \; v^{1}_{4} = e_{4} - e_{1}, \\
    &v^{2}_{1} = - e_{2}, \; v^{2}_{2} = e_{1} - e_{2},\; v^{2}_{3} = e_{3} - e_{2}, \; v^{2}_{4} = e_{4} - e_{2}, \\
    &v^{3}_{1} = -e_{3}, \; v^{3}_{2} = e_{1} - e_{3}, \; v^{3}_{3} = e_{2} - e_{3}, \; v^{3}_{4} = e_{4} - e_{3}, \\ 
    &v^{4}_{1} = -e_{4}, \; v^{4}_{2} = e_{1} - e_{4}, \; v^{4}_{3} = e_{2} - e_{4}, \; v^{4}_{4} = e_{3} - e_{4}.
\end{align*}

\begin{example}
\label{example: slope 1 in R^2}
Let $V \subset \mathfrak{t}^{2}$ be a one-dimensional subspace generated by the vector $e_{1}+e_{2}$. 
Then $V$ is admissible with respect to the standard two-simplex $\Delta_{2}$.
Indeed, 
since the map $i_{V}^{\ast}:(\mathfrak{t}^{2})^{\ast} \to (\mathfrak{t}^{1})^{\ast}$ is given by $i_{V}^{\ast}(\xi_{1},\xi_{2}) = \xi_{1} + \xi_{2}$,
we see that 
\begin{align*}
    &i_{V}^{\ast}(v^{0}_{1}) = 1, \; i_{V}^{\ast}(v^{0}_{2}) = 1, \\
    &i_{V}^{\ast}(v^{1}_{1}) = 0, \; i_{V}^{\ast}(v^{1}_{2}) = -1, \\
    &i_{V}^{\ast}(v^{2}_{1}) = -1, \; i_{V}^{\ast}(v^{2}_{2}) = 0.
\end{align*}
Hence, the vertex $\sigma_{0} \in \Delta_{2}$ is good with respect to the map $i_{V}^{\ast}$ while the vertices $\sigma_{1},\sigma_{2} \in \Delta_{2}$ are not good with respect to the map $i_{V}^{\ast}$ because the vertices $\sigma_{1},\sigma_{2}$ do not satisfy the condition (2) in Definition \ref{def: vertex and no direction vectors are perp}.
Moreover, since the polytope $i_{V}^{\ast}(\Delta_{2})$ is an interval, $V$ is admissible with respect to the standard two-simplex $\Delta_{2}$.
\end{example}

\begin{example}
\label{example: slope 2 in R^2}
Let $V \subset \mathfrak{t}^{2}$ be a one-dimensional subspace  generated by the vector $e_{1}+2 e_{2}$. 
Then $V$ is admissible with respect to the standard two-simplex $\Delta_{2}$.
Indeed, 
since the map $i_{V}^{\ast}:(\mathfrak{t}^{2})^{\ast} \to (\mathfrak{t}^{1})^{\ast}$ is given by $i_{V}^{\ast}(\xi_{1},\xi_{2}) = \xi_{1} + 2\xi_{2}$,
we see that 
\begin{align*}
    &i_{V}^{\ast}(v^{0}_{1}) = 1, \; i_{V}^{\ast}(v^{0}_{2}) = 2, \\
    &i_{V}^{\ast}(v^{1}_{1}) = 1, \; i_{V}^{\ast}(v^{1}_{2}) = -1, \\
    &i_{V}^{\ast}(v^{2}_{1}) = -2, \; i_{V}^{\ast}(v^{2}_{2}) = -1.
\end{align*}
Hence, the vertices $\sigma_{0},\sigma_{2} \in \Delta_{2}$ are good with respect to the map $i_{V}^{\ast}$ while the vertex $\sigma_{1} \in \Delta_{2}$ is not good with respect to the map $i_{V}^{\ast}$ because the vertex $\sigma_{1}$ do not satisfy the condition (1) in Definition \ref{def: vertex and no direction vectors are perp}.
Moreover, since the polytope $i_{V}^{\ast}(\Delta_{2})$ is an interval, $V$ is admissible with respect to the standard two-simplex $\Delta_{2}$.
\end{example}

\begin{example}
\label{example: slope 3 in R^2}
Let $V \subset \mathfrak{t}^{2}$ be a one-dimensional subspace generated by the vector $e_{1}+3 e_{2}$. 
Then $V$ is not admissible with respect to the standard two-simplex $\Delta_{2}$ because $V$ fails to meet the second condition in Definition \ref{def: almost delzant}. 
Indeed, 
since the map $i_{V}^{\ast}:(\mathfrak{t}^{2})^{\ast} \to (\mathfrak{t}^{1})^{\ast}$ is given by $i_{V}^{\ast}(\xi_{1},\xi_{2}) = \xi_{1} + 3\xi_{2}$,
we see that 
\begin{align*}
    &i_{V}^{\ast}(v^{0}_{1}) = 1, \; i_{V}^{\ast}(v^{0}_{2}) = 3, \\
    &i_{V}^{\ast}(v^{1}_{1}) = 2, \; i_{V}^{\ast}(v^{1}_{2}) = -1, \\
    &i_{V}^{\ast}(v^{2}_{1}) = -3, \; i_{V}^{\ast}(v^{2}_{2}) = -2.
\end{align*}
Hence, the vertices $sigma_{0},\sigma_{2} \in \Delta_{2}$ are good with respect to the map $i_{V}^{\ast}$ while the vertex $\sigma_{1} \in \Delta_{2}$ is not good with respect to the map $i_{V}^{\ast}$ because the vertex $\sigma_{1}$ do not satisfy the condition (1) in Definition \ref{def: vertex and no direction vectors are perp}.
Moreover, 
while the polytope $i_{V}^{\ast}(\Delta_{2})$ is an interval, 
the vertex $\sigma_{2} \in \Delta_{2}$ does not satisfy the second condition in Definition \ref{def: almost delzant}. 
Thus, 
$V$ is not admissible with respect to the standard two-simplex $\Delta_{2}$.
\end{example}

\begin{example}
\label{example: subspace in R^4}
Let $V \subset \mathfrak{t}^{4}$ be a three-dimensional subspace generated by the following vectors:
\begin{equation*}
    \left[
        \begin{matrix}
            1 \\ 1 \\ 0 \\ 0
        \end{matrix}
    \right],
    \left[
        \begin{matrix}
            0 \\ 1 \\ 1 \\ 0
        \end{matrix}
    \right],
    \left[
        \begin{matrix}
            0 \\ 0 \\ 1 \\ 1
        \end{matrix}
    \right]
\end{equation*}
Then $V$ is not admissible with respect to the standard two-simplex $\Delta_{4}$ because $V$ fails to meet the first condition in Definition \ref{def: almost delzant}
Indeed, 
since the map $i_{V}^{\ast}:(\mathfrak{t}^{4})^{\ast} \to (\mathfrak{t}^{3})^{\ast}$ is given by
\begin{equation*}
    i_{V}^{\ast}(\xi_{1},\xi_{2},\xi_{3},\xi_{4}) = (\xi_{1}+\xi_{2},\xi_{2}+\xi_{3},\xi_{3}+\xi_{4}),
\end{equation*}
we see that 
\begin{align*}
    &i_{V}^{\ast}(v^{0}_{1}) = (1,0,0), \; 
    i_{V}^{\ast}(v^{0}_{2}) = (1,1,0), \\ 
    &i_{V}^{\ast}(v^{0}_{3}) = (0,1,1), \;
    i_{V}^{\ast}(v^{0}_{4}) = (0,0,1), \\
    &i_{V}^{\ast}(v^{1}_{1}) = (-1,0,0), \; 
    i_{V}^{\ast}(v^{1}_{2}) = (0,1,0), \\
    &i_{V}^{\ast}(v^{1}_{3}) = (-1,1,1), \;
    i_{V}^{\ast}(v^{1}_{4}) = (-1,0,1), \\
    &i_{V}^{\ast}(v^{2}_{1}) = (-1,-1,0), \; 
    i_{V}^{\ast}(v^{2}_{2}) = (0,-1,0), \\
    &i_{V}^{\ast}(v^{2}_{3}) = (-1,0,1), \;
    i_{V}^{\ast}(v^{2}_{4}) = (-1,-1,1), \\
    &i_{V}^{\ast}(v^{3}_{1}) = (0,-1,-1), \; 
    i_{V}^{\ast}(v^{3}_{2}) = (1,-1,-1), \\
    &i_{V}^{\ast}(v^{3}_{3}) = (1,0,-1), \;
    i_{V}^{\ast}(v^{3}_{4}) = (0,-1,0), \\
    &i_{V}^{\ast}(v^{4}_{1}) = (0,0,-1), \; 
    i_{V}^{\ast}(v^{4}_{2}) = (1,0,-1), \\
    &i_{V}^{\ast}(v^{4}_{3}) = (1,1,-1), \;
    i_{V}^{\ast}(v^{4}_{4}) = (0,1,0). 
\end{align*}
Hence, all the vertices $\sigma_{0},\ldots,\sigma_{4}$ are good with respect to the map $i_{V}^{\ast}$.
Since the polytope $i_{V}^{\ast}(\Delta_{4})$ (see Figure \ref{fig: cp4}) is the convex hull of the points
\begin{align*}
    &\sigma^{\prime}_{0} := i_{V}^{\ast}(\sigma_{0}) = (0,0,0), \;
    \sigma^{\prime}_{1} := i_{V}^{\ast}(\sigma_{1}) = (1,0,0), \\
    &\sigma^{\prime}_{2} := i_{V}^{\ast}(\sigma_{2}) = (1,1,0), \;
    \sigma^{\prime}_{3} := i_{V}^{\ast}(\sigma_{3}) = (0,1,1), \\
    &\sigma^{\prime}_{4} := i_{V}^{\ast}(\sigma_{4}) = (0,0,1),
\end{align*}
we see that the polytope $i_{V}^{\ast}(\Delta_{4})$ is not simple because the vertex $i_{V}^{\ast}(\sigma_{0})$ of $i_{V}^{\ast}(\Delta_{4})$ has four edges, 
i.e. the vertex $\sigma_{0}\in \Delta_{4}$ does not satisfy the first condition in Definition \ref{def: almost delzant}.
Thus, $V$ is not admissible with respect to the standard four-simplex $\Delta_{4}$.
\begin{figure}[h]
\begin{center}
    \includegraphics[keepaspectratio,width=7cm]{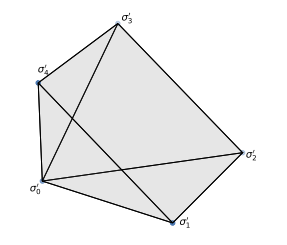}
    \caption{the polytope $i_{V}^{\ast}(\Delta_{4})$ in Example \ref{example: subspace in R^4}}
    \label{fig: cp4}
\end{center}   
\end{figure}
\end{example}



\bibliographystyle{alpha}
\bibliography{delzant-type}
\end{document}